\definecolor{darkgreen}{rgb}{0,0.4,0}
\definecolor{darkblue}{rgb}{0,0,.8}
\definecolor{darkred}{rgb}{0.8,0,0}
\crefname{section}{Section}{Sections}
\crefname{subsection}{Section}{Sections}
\Crefname{figure}{Figure}{Figures}
\newtheorem{theorem}{Theorem}
\newtheorem{lemma}[theorem]{Lemma}
\newtheorem{corollary}[theorem]{Corollary}
\newtheorem{proposition}[theorem]{Proposition}
\newtheorem{algorithm}[theorem]{Algorithm}
\newtheorem{remark}[theorem]{Remark}
\def\subsection#1{%
 \vskip2mm
 \refstepcounter{subsection}%
 {\bf\arabic{section}.\arabic{subsection}.~#1.~}%
}
\def\A{\mathbf{A}}            
\def\b{\mathbf{b}} 
\def\c{c}
\def\eps{\varepsilon}
\def\set#1#2{\big\{#1\,:\,#2\big\}}
\newcommand{\norm}[3][]{#1\|#2#1\|_{#3}}
\newcommand{\enorm}[2][]{#1|\hspace*{-.3mm}#1|\hspace*{-.3mm}#1|#2#1|\hspace*{-.3mm}#1|\hspace*{-.3mm}#1|}
\def\div{{\rm div}}
\def\osc{ {\rm osc} }
\def\<{\langle\hspace*{-.9mm}\langle}
\def\>{\rangle\hspace*{-.9mm}\rangle}
\def\product#1#2{(#1\hspace*{.5mm},#2)}
\def\N{{\mathbb N}}
\def\R{{\mathbb R}}
\def\T{{\mathbb T}}
\def\nf{{\mathbf n}}
\def\AA{{\mathcal A}}  
\def\CC{{\mathcal C}}
\def\II{{\mathcal I}}
\def\MM{{\mathcal M}}
\def\NN{{\mathcal N}}
\def\OO{{\mathcal O}}
\def\PP{{\mathcal P}}
\def\RR{{\mathcal R}}
\def\SS{{\mathcal S}}
\def\TT{{\mathcal T}}
\def\refine{{\tt refine}}
\def\FFt{\FF_{T}}               
\def\[{[\hspace*{-0.8mm}[}
\def\]{]\hspace*{-0.8mm}]}
\newcommand{\jump}[2][]{%
 \ifthenelse{\equal{#1}{}}
 {[\hspace*{-0.8mm}[#2]\hspace*{-0.8mm}]}%
 {\left[\hspace*{-1.6mm}\left[#2\right]\hspace*{-1.6mm}\right]}%
}
\def\diam{{\rm diam}}
\def\Cmark{C_{\rm mark}}
\def\Clin{C_{\rm lin}}
\def\qlin{q_{\rm lin}}
\def\qest{q_{\rm est}}
\def\Copt{C_{\rm opt}}
\def\Crel{C_{\rm rel}}
\def\Cmns{C_{\rm MNS}}%
\def\Cgal{C_{\rm galerkin}}%
\def\Cbil{C_{\rm bil}}%
\def\Cstab{C_{\rm stab}}%
\def\Cosc{C_{\rm osc}}%
\def\Cell{C_{\rm ell}}%
\def\CAbil{C_{\rm \AA}}%
\def\CAbil{C_{\rm cont}}%
\def\Cdual{C_{\rm aux}}
\def\refine{{\tt refine}}
\def\FF{{\mathcal F}}
\def\facet{F}
\def\star{\times}
\title[Adaptive FVM for general PDEs]{Adaptive vertex-centered finite volume methods\\ for general second-order linear elliptic PDEs}
\author{Christoph Erath}
\address{TU Darmstadt, Department of Mathematics, Dolivostra\ss{}e 15, 64293 Darmstadt, Germany}
\email{Erath@mathematik.tu-darmstadt.de\quad\rm(corresponding author)}
\author{Dirk Praetorius}
\address{TU Wien, Institute for Analysis and Scientific Computing,
Wiedner Hauptstra\ss{}e 8-10, 1040 Wien, Austria}
\email{Dirk.Praetorius@asc.tuwien.ac.at}
\thanks{C. Erath (corresponding author): TU Darmstadt, Germany; Erath@mathematik.tu-darmstadt.de}
\thanks{D. Praetorius: TU Wien, Austria; Dirk.Praetorius@asc.tuwien.ac.at}
\thanks{The second author acknowledges support through the research project
\emph{Optimal adaptivity for BEM and FEM-BEM coupling} funded by the Austrian Science Fund (FWF) under
grant P27005 and of the special research program
\emph{Taming complexity in partial differential systems} funded by the Austrian Science Fund (FWF) under grant F65}
\date{\bf\color{red}\today}
\begin{document}

\begin{abstract}
We prove optimal convergence rates for the discretization of a general second-order linear elliptic PDE with an adaptive 
vertex-centered finite volume scheme. While our prior work
Erath and Praetorius [SIAM J. Numer. Anal., 54 (2016), pp. 2228--2255] was restricted to symmetric
problems, 
the present analysis also covers non-symmetric problems and hence the important case of present convection.
\end{abstract}

\keywords{finite volume method, C\'ea-type quasi-optimality,
{\sl a~posteriori} error estimators, adaptive algorithm, local mesh-refinement,
  optimal convergence rates, non-symmetric problems}

\subjclass{65N08, 65N30, 65N50, 65N15, 65N12, 65Y20, 41A25}

\maketitle

\section{Introduction}
%
In this work we consider a general second-order linear elliptic PDE and approximate the solution
with an adaptive vertex-centered finite volume method (FVM). Finite volume methods are well established
in fluid mechanics, since they naturally preserve numerical flux conservation.

\subsection{Model problem}
\label{section:modelproblem}%
Let $\Omega\subset \R^d$, $d=2,3$, be a bounded Lipschitz domain with 
polygonal boundary $\Gamma:=\partial\Omega$.
As model problem, we consider the following stationary diffusion problem: Given $f\in L^2(\Omega)$, find $u\in H^1(\Omega)$ such that
\begin{align}
\label{eq:model}
 \div (-\A \nabla u+\b u)+\c u  &= f \quad \text{in }\Omega\qquad\text{and}\qquad
                               u = 0\quad \text{on }\Gamma.
\end{align}
We suppose that the diffusion matrix $\A=\A(x)\in\R^{d\times d}$ is bounded, symmetric, and uniformly positive definite, i.e., there exist constants $\lambda_{\rm min},\lambda_{\rm max}>0$ such that
\begin{align}\label{eq:A}
 \lambda_{\rm min}\,|\mathbf{v}|^2\leq \mathbf{v}^T\A(x)\mathbf{v}\leq \lambda_{\rm max}\,|\mathbf{v}|^2
 \quad\text{for all } \mathbf{v}\in \R^d \text{ and almost all }x\in\Omega.
\end{align}
Let $\TT_0$ be a given initial triangulation of $\Omega$; see \cref{section:triangulations} below.
For convergence of FVM and well-posedness of the residual error estimator, 
we additionally require that $\A(x)$ is piecewise Lipschitz continuous, i.e., 
\begin{align}\label{eq3:A}
 \A\in W^{1,\infty}(T)^{d\times d}
 \quad\text{for all } T\in\TT_0.
\end{align} 
We suppose that the lower-order terms satisfy 
the assumption 
\begin{align}
  \label{eq:bcestimate1}
  \b\in W^{1,\infty}(\Omega)^d
  \text{ and }
  \c\in L^{\infty}(\Omega)
  \text{ with }  
  \frac{1}{2}\div\,\b+\c \geq 0
  \text{ almost everywhere on }\Omega.
\end{align}
With $\product{\phi}{\psi}_\Omega = \int_\Omega \phi(x)\psi(x)\,dx$ being the $L^2$-scalar product,
the weak formulation of the model problem~\cref{eq:model} reads as follows: Find $u\in H^1_0(\Omega)$ such that 
\begin{align}
\label{eq:weakform}
 \AA(u,w):= \product{\A\nabla u-\b u}{\nabla w}_{\Omega}
 +\product{\c u}{w}_{\Omega} = \product{f}{w}_{\Omega}
 \quad\text{for all } w\in H^1_0(\Omega).
\end{align}
According to our assumptions~\cref{eq:A,eq:bcestimate1}, 
the bilinear form $\AA(\cdot,\cdot)$ is continuous and elliptic on $H^1_0(\Omega)$. 
Existence and uniqueness of the 
solution $u\in H^1_0(\Omega)$ of~\cref{eq:weakform} thus follow from the Lax-Milgram theorem.
Moreover, the operator
induced quasi norm $\enorm{\cdot}$ satisfies that
\begin{align}\label{eq:enorm:equivalent}
 \Cell\norm{v}{H^1(\Omega)}^2
 \leq \enorm{v}^2 := \AA(v,v)\leq 
 \CAbil\norm{v}{H^1(\Omega)}^2
 \qquad\text{for all }v\in H_0^1(\Omega),
\end{align}
where $\Cell>0$ depends only on $\lambda_{\rm min}$ and $\Omega$, whereas $\CAbil>0$ depends only on
$\lambda_{\rm max}$, $\norm{\b}{L^\infty(\Omega)}$ and $\norm{\c}{L^\infty(\Omega)}$.

\subsection{Adaptive FVM}
%
In the past 20 years, there have been major contributions to the mathematical understanding of  adaptive mesh refinement 
algorithms, mainly in the context of the finite element method (FEM).
While the seminal works~\cite{doerfler96,mns00,bdd04,stevenson07,ckns} were restricted to symmetric operators, the recent works~\cite{Mekchay:2005-1,Cascon:2012-1,Feischl:2014,bhp17} proved
convergence of adaptive FEM with optimal algebraic rates for general second order linear elliptic PDEs.
The work~\cite{axioms} gives an exhausted overview of the developments
and it gains, in an abstract framework, a general recipe to prove optimal adaptive convergence rates
of adaptive mesh refining algorithms.
Basically, the numerical discretization scheme, the {\sl a~posteriori} error estimator and the adaptive algorithm have to fulfill four criteria (called {\em axioms} in~\cite{axioms}), namely,
\emph{stability on non-refined elements}, \emph{reduction on refined elements}, \emph{general quasi-orthogonality}, and
\emph{discrete reliability}.
Building upon these findings,
our recent work~\cite{Erath:2016-1} gave the first proof of convergence of adaptive FVM with optimal algebraic rates for a symmetric model problem~\cref{eq:model} with $\b=0$ and $\c=0$. 

\subsection{Contributions and outline}
In this work, we are in particular interested in the non-symmetric model problem with
$\b\not = 0$ in \cref{eq:model}.
The proofs of \emph{stability on non-refined elements}, \emph{reduction on refined elements}, and
\emph{discrete reliability} follow basically the proofs in~\cite{Erath:2016-1}; see \cref{subsec:linearconv}. 
Thus, the major contribution of the present work is the proof of the \emph{general quasi-orthogonality} property for the non-symmetric problem, which is satisfied under some mild regularity assumptions on the dual problem.
Similar assumptions are required in~\cite{Mekchay:2005-1,Cascon:2012-1} 
to prove convergence for an adaptive FEM procedure. Moreover, we note that~\cite{Mekchay:2005-1,Cascon:2012-1} require slightly more restrictions on the model data (namely, $\div(\b)=0$) and on the mesh-refinement (the so-called \emph{interior node property}) 
for proving quasi-orthogonality which are avoided in the present analysis. 

At this point, we note that~\cite{Feischl:2014,bhp17} improve the FEM result of~\cite{Mekchay:2005-1,Cascon:2012-1} by a different approach. Instead of the duality argument,  the analysis exploits the {\sl a~priori} convergence of FEM solutions (which follows from the classical C\'ea lemma) by splitting the operator into a symmetric and elliptic part and a compact perturbation.
In particular, there is no duality argument applied and, therefore, no additional 
regularity assumption is required. However, it seems to be difficult to transfer the 
analysis of~\cite{Feischl:2014,bhp17} to FVM due to the lack of the C\'ea lemma.

We also mention that unlike the FEM literature, a direct proof of the \emph{general quasi-orthogonality} is not available for FVM due to the lack of Galerkin orthogonality.
Instead, the FVM work~\cite{Erath:2016-1} first proves linear convergence
which relies on a quasi-Galerkin orthogonality \cite[Lemma~11]{Erath:2016-1} for FVM. 
Unfortunately, this auxiliary result does not hold for non-symmetric problems.

Hence, to handle the non-symmetric case, the missing Galerkin orthogonality 
and the lack of an optimal $L^2$ estimate for FVM seem to be the bottlenecks.
To overcome these difficulties, we first estimate the FVM error in the bilinear form by oscillations in \cref{lem:galerkinosc}.  
Then we provide a new $L^2$-type estimate in \cref{lemma:dual} which depends
on the regularity of the corresponding dual problem plus oscillations.
These two results provide the key arguments to prove a quasi-Galerkin orthogonality in~\cref{lemma:orth}. Unlike the literature,
this estimate also includes a mesh-size weighted estimator term.  
With the aid of the previous results, we 
show linear convergence in \cref{theorem:mns:linear}, where the proof relies on the previous results.
Finally, optimal algebraic convergence rates are guaranteed by \cref{theorem:mns} which follows directly from the literature.

We remark that the proposed \cref{algorithm:mns} additionally marks
oscillations to overcome the lack of classical Galerkin orthogonality.
Note that this is not required for adaptive FEM. However, since FVM is not a best approximation method,
the proposed approach appears
to be rather natural. In practice, however, this additional marking is negligible;
see also~\cite[Remark 12]{Erath:2016-1}.
Overall, the present work seems to be the first which proves convergence with optimal rates of an adaptive FVM algorithm
for the solution of general second-order linear elliptic PDEs.

\section{Preliminaries}
This section introduces the notation, the discrete scheme, as well as the 
residual {\sl a~posteriori} error estimator. In particular, 
we fix our notation used throughout this work.

\subsection{General notation}
Throughout, $\nf$ denotes the unit normal vector to the boundary pointing outward the respective domain.
In the following, we mark the mesh dependency of quantities by appropriate indices, e.g.,
$u_\ell$ is the solution on the triangulation $\TT_\ell$.
Furthermore, $\lesssim$ abbreviates $\leq$ up to some (generic) multiplicative constant which is clear from the context.

\subsection{Triangulations}
\label{section:triangulations}%
The FVM relies on two partitions of $\Omega$, the \emph{primal mesh} $\TT_\star$ and the associated \emph{dual mesh} $\TT_\star^*$. 
The primal mesh $\TT_\star$ is a regular triangulation of $\Omega$ into non-degenerate closed 
triangles/tetrahedra $T\in\TT_\star$, 
where the possible discontinuities of the coefficient matrix $\A$ are aligned with $\TT_\star$. Define the local mesh-size function 
\begin{align}
 h_\star\in L^\infty(\Omega),
 \quad
 h_\star|_T:=h_T := |T|^{1/d}
 \quad\text{for all }T\in\TT_\star.
\end{align} 
Let $\diam(T)$ be the Euclidean diameter of $T$. Suppose that $\TT_\star$ is $\sigma$-shape regular, i.e.,
\begin{align}
 \label{eq:sigma}
 \max_{T\in\TT_\star}\frac{\diam(T)}{|T|^{1/d}} \le \sigma < \infty.
\end{align}
Note that this implies $h_T \le \diam(T) \le\sigma\,h_T$.
Let $\NN_\star$ (or $\NN_\star^\Omega$) denote the set of all (or all interior) nodes.
Let $\FF_\star$ (or $\FF_\star^\Omega$) denote the set of all (or all interior) facets.
For $T\in\TT_\star$, let $\FF_T := \set{F\in\FF_\star}{F\subseteq\partial T}$ be the set of facets of $T$. Moreover, 
\begin{align}
 \omega_\star(T):=\bigcup\set{T'\in\TT_\star}{T\cap T'\neq\emptyset}\subseteq\overline{\Omega}
\end{align}
denotes the element patch of $T$ in $\TT_\star$.

\begin{figure}
\begin{center}
	\subfigure[\label{subfig:dualmesh} $\TT_\star$ (triangles) and $\TT_\star^*$ (grey boxes).]
	{\includegraphics[width=0.25\textwidth]
	{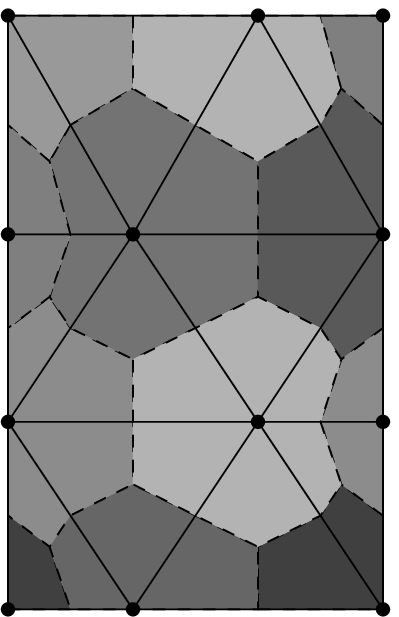}}
	\hspace{0.1\textwidth}
	\subfigure[\label{subfig:nvb}Refinement of a triangle by newest vertex bisection (NVB).]
	{\includegraphics[width=0.45\textwidth]
	{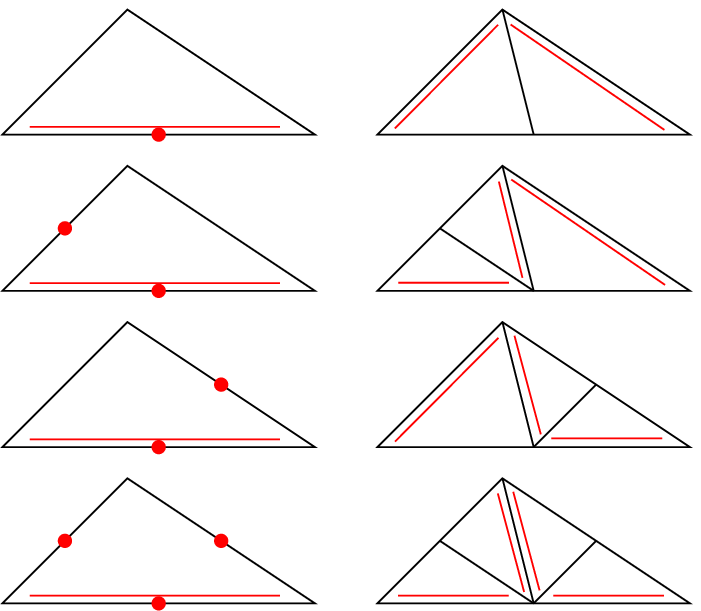}}
\end{center}
\caption{\label{fig:mesh}%
Construction of the dual mesh $\TT_\star^*$ from the primal mesh $\TT_\star$ in 2D (left) and 2D newest vertex bisection (right). Each triangle has a reference edge (indicated by the double line). If edges are marked for refinement (indicated by dots), the resulting configurations are shown.}
\end{figure}

The associated \emph{dual mesh} $\TT_\star^*$ is obtained as follows:
For $d=2$, connect the center of gravity of an element $T\in\TT_\star$ with the midpoint of an edge of $\partial T$. 
These lines define the non-degenerate closed polygons $V_i\in\TT_\star^*$; see \cref{subfig:dualmesh}.
For $d=3$,
we first connect the center of gravity of $T\in\TT_\star$ with each center of gravity of the four
faces of $F\in\FF_T$ by straight lines. 
Then, as in the 2D case, we connect each center of gravity of $F\in\FF_T$ to
the midpoints of the edges of the face $F$. Note that this forms polyhedrons $V_i\in\TT_\star^*$.
In 2D and 3D, each volume $V_i\in\TT_\star^*$
is uniquely associated with a node $a_i$ of $\TT_\star$.

\subsection{Discrete spaces}
For a partition $\MM$ of $\Omega$ and $p\in\N_0$, let
\begin{align}
 \PP^p(\MM) := \set{v:\Omega\to\R}{\forall M\in\MM\quad v|_M\text{ is polynomial of degree }\le p}
\end{align}
be the space of $\MM$-piecewise polynomials of degree $p$.
With this at hand, let
\begin{align}
 \SS^1(\TT_\star) := \PP^1(\TT_\star)\cap H^1(\Omega) = \set{v_\star\in C(\Omega)}{\forall T\in\TT_\star\quad v_\star|_T\text{ is affine}}.
\end{align}
Then the discrete ansatz space 
\begin{align}
 \SS^1_0(\TT_\star):= \SS^1(\TT_\star)\cap H^1_0(\Omega) 
 = \set{v_\star\in\SS^1(\TT_\star)}{v_\star|_\Gamma = 0}
\end{align}
consists of all $\TT_\star$-piecewise affine and globally continuous functions which are zero on $\Gamma$. 
By convention, the discrete test space
\begin{align}
 \PP^0_0(\TT^*_\star):= \set{v_\star^*\in\PP^0(\TT_\star^*)}{v_\star^*|_\Gamma=0}
\end{align}%
consists all $\TT_\star^*$-piecewise constant functions which are zero on all $V\in \TT^*_\star$ 
with $\partial V\cap \Gamma\not = \emptyset$.

\subsection{Mesh-refinements}
For local mesh-refinement, we employ newest vertex bisection (NVB); 
see~\cite{stevenson08,kpp13} and \cref{subfig:nvb}.
Below, we use the following notation:
First, $\TT':=\refine(\TT,\MM)$ denotes the coarsest conforming triangulation 
generated by NVB from a conforming triangulation $\TT$
such that all marked elements $\MM\subseteq \TT$ have been refined, i.e., $\MM \subseteq \TT \backslash \TT'$.
Second, we simply write $\TT' \in \refine(\TT)$, if $\TT'$ is an 
arbitrary refinement of $\TT$, i.e., there exists a finite number of refinements steps $j=1,\ldots,n$ such
that $\TT'=\TT'_n$ can be generated from $\TT=\TT'_0$ with marked elements $\MM'_j\subseteq \TT'_{j}$ and
$\TT'_j=\refine(\TT'_{j-1},\MM'_{j-1})$.
Note that NVB guarantees that there exist only finitely many shapes of triangles and patches in $\TT' \in \refine(\TT)$. These shapes are determined by $\TT$. In particular, the meshes $\TT' \in \refine(\TT)$ are uniformly $\sigma$-shape regular~\cref{eq:sigma}, where $\sigma$ depends only on $\TT$.

\subsection{Vertex-centered finite volume method (FVM)}
The FVM approximates the solution $u\in H^1_0(\Omega)$ of~\cref{eq:weakform} by some  $u_\star\in\SS^1_0(\TT_\star)$. The scheme is based on the balance equation
over $\TT_\star^*$ and reads in variational form as follows:
Find $u_\star\in\SS^1_0(\TT_\star)$ such that
\begin{align}
\label{eq:disc_systemfvem}
  \AA_\star(u_\star,w_\star^*)
  &= \product{f}{w_\star^*}_\Omega=\sum_{a_i\in\NN_\star^\Omega}w_\star^*|_{V_i}
  \int_{V_i} f\,dx
  \quad \text{for all } w_\star^*\in\PP^0_0(\TT_\star^*).
\end{align}
For all $v_\star\in\SS^1_0(\TT_\star)$
and all $w_\star^*\in\PP^0_0(\TT_\star^*)$,
the bilinear form reads 
\begin{align*}
  \AA_\star(v_\star,w_\star^*):=\sum_{a_i\in\NN_\star^\Omega}w_\star^*|_{V_i}
  \Big(\int_{\partial V_i}(-\A \nabla v_\star+\b v_\star)\cdot \nf\,ds
  +\int_{V_i}\c v_\star\,dx\Big).
\end{align*}
To recall that the FVM is well-posed on sufficiently fine triangulations $\TT_\star$, 
we require the following interpolation operator; see, 
e.g.,~\cite{Erath:2012-1,Erath:2016-1}.

\begin{lemma}
  \label{lem:proplindual}
  With $\chi_i^*\in\PP^0(\TT_\star^*)$ being the characteristic function of $V_i\in\TT_\star^*$, define 
 \begin{align*}
  \II_\star^*:\CC(\overline\Omega)\to\PP^0(\TT^*_{\star}),\quad
  \II_\star^*v:=\sum_{a_i\in\NN_\star}v(a_i)\chi_i^*.
\end{align*}
Then, for all $T\in\TT_\star$, $\facet\in\FFt$, and $v_\star\in\SS^1(\TT_\star)$, it holds that
  \begin{align}
    \label{eq1:proplindual}
    &\int_T (v_\star-\II_\star^*v_\star)\,dx=0=\int_{\facet}(v_\star-\II_\star^*v_\star)\,ds,\\
    \label{eq2:proplindual}
    &\norm{v_\star-\II_\star^*v_\star}{L^2(T)}\leq h_T \norm{\nabla v_\star}{L^2(T)},\\
    \label{eq3:proplindual}
    &\norm{v_\star-\II_\star^*v_\star}{L^2(\facet)}\leq C h_T^{1/2}\norm{\nabla v_\star}{L^2(T)}.
  \end{align}
In particular, it holds that $\II_\star^*v_\star \in \PP^0_0(\TT_\star^*)$ for all $v_\star\in\SS^1_0(\TT_\star)$.
The constant $C>0$ depends only on $\sigma$-shape regularity of $\TT_\star$.
\qed
\end{lemma}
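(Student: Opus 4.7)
The plan is to verify the four assertions in turn, exploiting the barycentric structure of $\TT_\star^*$ and the fact that $v_\star$ is affine on each $T\in\TT_\star$.

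For the vanishing-mean identities \cref{eq1:proplindual}, I would first observe that the construction of $\TT_\star^*$ (in $d=2$: connect each centroid to the midpoints of the edges; in $d=3$: centroid of $T$ to face centers, then face centers to edge midpoints) produces the equipartitions $|V_i\cap T|=|T|/(d+1)$ for every vertex $a_i$ of $T\in\TT_\star$ and $|V_i\cap\facet|=|\facet|/d$ for every endpoint $a_i$ of a facet $\facet\in\FFt$; these are immediate consequences of the invariance of the construction under permutation of the vertices of $T$. Since $v_\star$ is affine, the vertex quadrature rule is exact, so
\begin{align*}
 \int_T v_\star\,dx = \tfrac{|T|}{d+1}\sum_i v_\star(a_i) = \sum_i v_\star(a_i)\,|V_i\cap T| = \int_T \II_\star^* v_\star\,dx,
\end{align*}
and the facet identity follows verbatim with $\tfrac{|\facet|}{d}$ in place of $\tfrac{|T|}{d+1}$.

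For the element estimate \cref{eq2:proplindual}, I would use that $\nabla v_\star$ is constant on $T$ together with $v_\star(x)-(\II_\star^* v_\star)(x)=\nabla v_\star\cdot(x-a_i)$ pointwise on $V_i\cap T$. Cauchy--Schwarz in the dot product gives
\begin{align*}
 \|v_\star-\II_\star^* v_\star\|_{L^2(T)}^2
  \leq |\nabla v_\star|^2 \sum_i \int_{V_i\cap T}|x-a_i|^2\,dx,
\end{align*}
and the geometric bound $|x-a_i|\leq h_T$ on the barycentric sub-region $V_i\cap T$, together with $\sum_i|V_i\cap T|=|T|$ and $|\nabla v_\star|^2|T|=\|\nabla v_\star\|_{L^2(T)}^2$, yields the estimate. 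The facet estimate \cref{eq3:proplindual} follows by integrating the same pointwise identity over $V_i\cap\facet$, using $|\facet|\lesssim h_T^{d-1}$ from $\sigma$-shape regularity, which produces the announced $h_T^{1/2}$ scaling with a constant depending only on $\sigma$.

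Finally, the inclusion $\II_\star^* v_\star\in\PP^0_0(\TT_\star^*)$ for $v_\star\in\SS^1_0(\TT_\star)$ is immediate: any dual volume $V_i$ with $\partial V_i\cap\Gamma\neq\emptyset$ corresponds to a node $a_i\in\Gamma$, hence $v_\star(a_i)=0$ and $\II_\star^* v_\star|_{V_i}=0$. The main obstacle is the careful geometric bookkeeping in 3D, where the two-step construction of $\TT_\star^*$ is a symmetric refinement of (rather than identical to) the barycentric subdivision of $T$; the equipartition of volumes and the pointwise distance bound $|x-a_i|\leq h_T$ nevertheless persist by symmetry of that refinement under permutation of the vertices of $T$, which reduces the 3D argument to bookkeeping on each face of the tetrahedron.
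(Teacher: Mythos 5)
Your route is the standard one for this lemma (which the paper itself does not prove but cites from~\cite{Erath:2012-1,Erath:2016-1}): equipartition of the dual pieces, $|V_i\cap T|=|T|/(d+1)$ and $|V_i\cap \facet|=|\facet|/d$, obtained from the invariance of the barycentric construction under vertex permutations, combined with exactness of the vertex quadrature rule for affine functions gives \cref{eq1:proplindual}; the pointwise identity $v_\star(x)-\II_\star^*v_\star(x)=\nabla v_\star|_T\cdot(x-a_i)$ on $V_i\cap T$ plus scaling gives \cref{eq2:proplindual}--\cref{eq3:proplindual}; and the observation that $\partial V_i\cap\Gamma\neq\emptyset$ forces $a_i\in\Gamma$, hence $v_\star(a_i)=0$, gives the final inclusion. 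All of this, including your handling of the 3D face construction, is sound.

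The one step that fails as written is the pointwise bound $|x-a_i|\le h_T$ on $V_i\cap T$. With the paper's convention $h_T=|T|^{1/d}$ this is false for thin elements: for the triangle with vertices $(0,0)$, $(L,0)$, $(0,\eps)$ one has $h_T=(L\eps/2)^{1/2}$, whereas $V_i\cap T$ contains the midpoint of the long edge, at distance $L/2\gg h_T$ from $a_i$ when $\eps\ll L$. What is true is $|x-a_i|\le\diam(T)\le\sigma h_T$, so your argument actually yields \cref{eq2:proplindual} with a constant depending on the $\sigma$-shape regularity rather than with constant $1$; the same correction enters your derivation of \cref{eq3:proplindual}, where a $\sigma$-dependent constant is permitted anyway. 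Note that this is not merely a blemish of your write-up: taking $v_\star(x)=x_1$ on the thin triangle above gives $\norm{v_\star-\II_\star^*v_\star}{L^2(T)}^2\gtrsim L^3\eps$ while $h_T^2\,\norm{\nabla v_\star}{L^2(T)}^2=L^2\eps^2/4$, so the constant-one form of \cref{eq2:proplindual} cannot hold for $h_T=|T|^{1/d}$ without a shape-regularity constant. You should therefore either state \cref{eq2:proplindual} with a $\sigma$-dependent constant (which is what all later uses, e.g.\ \cref{lem:galerkinosc}, require) or replace $h_T$ by $\diam(T)$ in that estimate; with this repair your proof is complete.
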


The following lemma is a key observation for the FVM analysis. 
For Lipschitz continuous $\A$, the proof is 
found in~\cite{Ewing:2002-1,Erath:2012-1}. We 
note that the result transfers directly to the present situation~\cite{Erath:2016-1,Erath:2017-1}, where $\A$ 
satisfies~\cref{eq:A}--\cref{eq3:A} and $\b\not =0$ and $\c\not = 0$.  

\begin{lemma}
 \label{lem:bilinearcomp}
 There exists $\Cbil>0$ such that for all $v_\star,w_\star\in\SS^1_0(\TT_\star)$
 \begin{align}
  \label{eq:bilinearcomp}
  |\AA(v_\star,w_\star) - \AA_\star(v_\star,\II_\star^*w_\star)|
    \leq \Cbil \sum_{T\in\TT_\star} h_T\,\norm{v_\star}{H^1(T)}\norm{w_\star}{H^1(T)}.
 \end{align} 
 Moreover, let $\TT_\star$ be sufficiently fine such that
 $\Cell-\Cbil\norm{h_\star}{L^\infty(\Omega)}>0$, where $\Cell>0$ is the ellipticity constant from~\cref{eq:enorm:equivalent}. 
 Then there exists $\Cstab>0$ such that
 \begin{align}\label{eq:stable}
  \AA_\star(v_\star,\II_\star^*v_\star)
  \ge \Cstab\,\norm{v_\star}{H^1(\Omega)}^2
  \quad\text{for all }v_\star\in\SS^1_0(\TT_\star).
 \end{align}
 In particular, the FVM system~\cref{eq:disc_systemfvem} admits a unique solution $u_\star\in\SS^1_0(\TT_\star)$.
 The constants $\Cbil$ and $\Cstab$ depend only on 
 the data assumptions~\cref{eq:A,eq:bcestimate1}, 
 the $\sigma$-shape regularity of $\TT_\star$, and $\Omega$.
 \qed
\end{lemma}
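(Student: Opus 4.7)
The plan is to establish the three assertions in sequence, treating them as one-sided perturbation estimates. The comparison estimate \cref{eq:bilinearcomp} is the main technical ingredient; once it is proved, the stability bound \cref{eq:stable} and the well-posedness follow by essentially algebraic arguments using the ellipticity \cref{eq:enorm:equivalent}.

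The core strategy for \cref{eq:bilinearcomp} is to split the bilinear forms term by term and exploit the orthogonality and approximation properties of $\II_\star^*$ from \cref{lem:proplindual}. For the reaction term, the identity
\begin{equation*}
 \sum_{a_i\in\NN_\star^\Omega} w_\star(a_i) \int_{V_i} \c v_\star\,dx = \int_\Omega \c v_\star\,\II_\star^* w_\star\,dx
\end{equation*}
reduces the difference to $\int_\Omega \c v_\star (w_\star - \II_\star^* w_\star)\,dx$, which is bounded by $\|\c\|_{L^\infty(\Omega)} \|v_\star\|_{L^2(T)} \|w_\star - \II_\star^* w_\star\|_{L^2(T)}$ on each $T$ and produces the required $h_T$ factor via \cref{eq2:proplindual}. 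The diffusion and convection contributions are the delicate part: we rewrite the dual-cell flux integrals $\int_{\partial V_i}(-\A\nabla v_\star + \b v_\star)\cdot \nf\,ds$ as sums over primal elements $T\supseteq V_i\cap T$, integrate by parts on each intersection $T\cap V_i$, and use that $\nabla v_\star|_T$ is constant so that $\A\nabla v_\star$ is $W^{1,\infty}$ on each $T$ by \cref{eq3:A}, while $\b v_\star$ is $W^{1,\infty}$ on $T$ by \cref{eq:bcestimate1}. The interior-facet contributions collapse after summation thanks to the continuity of $w_\star$ and the zero-average property \cref{eq1:proplindual} of $w_\star - \II_\star^* w_\star$ on facets; the resulting element-wise remainder is of order $h_T$ times $\|\nabla v_\star\|_{L^2(T)} \|\nabla w_\star\|_{L^2(T)}$, absorbing the Lipschitz bounds of $\A$ and $\b$ into $\Cbil$. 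The expected obstacle is the bookkeeping at the interior facets of the dual mesh where $\A\nabla v_\star$ jumps across primal element boundaries; here one pairs each jump with the $\II_\star^*$-invariant edge average to obtain a clean $h_T$ bound. Since the structure of this computation is identical to the Lipschitz $\A$ case treated in~\cite{Ewing:2002-1,Erath:2012-1} with only harmless additions from the $\b$- and $\c$-terms, as already noted in~\cite{Erath:2016-1,Erath:2017-1}, one may alternatively just quote the result.

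Given \cref{eq:bilinearcomp}, the stability \cref{eq:stable} is immediate: setting $w_\star = v_\star$ and using the elementwise bound $h_T \le \|h_\star\|_{L^\infty(\Omega)}$ yields
\begin{equation*}
 \AA_\star(v_\star,\II_\star^* v_\star) \ge \AA(v_\star,v_\star) - \Cbil \|h_\star\|_{L^\infty(\Omega)} \|v_\star\|_{H^1(\Omega)}^2 \ge \big(\Cell - \Cbil \|h_\star\|_{L^\infty(\Omega)}\big)\|v_\star\|_{H^1(\Omega)}^2,
\end{equation*}
where the first inequality uses \cref{eq:enorm:equivalent}. Under the mesh-size smallness assumption, the prefactor is positive and defines $\Cstab$.

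Finally, for unique solvability of \cref{eq:disc_systemfvem}, note that both $\SS^1_0(\TT_\star)$ and $\PP^0_0(\TT_\star^*)$ have dimension equal to $\#\NN_\star^\Omega$, and $\II_\star^*$ restricts to a bijection between them (mapping the nodal hat basis to the characteristic-function basis). The system is therefore square, so it suffices to verify injectivity: if $\AA_\star(u_\star,w_\star^*)=0$ for all $w_\star^*\in\PP^0_0(\TT_\star^*)$, then choosing $w_\star^* = \II_\star^* u_\star$ and applying \cref{eq:stable} forces $\|u_\star\|_{H^1(\Omega)}=0$, hence $u_\star = 0$.
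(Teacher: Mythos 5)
Your argument is correct and follows exactly the standard route that the paper itself merely cites (elementwise integration by parts combined with the properties \cref{eq1:proplindual}--\cref{eq3:proplindual} of $\II_\star^*$ for \cref{eq:bilinearcomp}, then \cref{eq:enorm:equivalent} for \cref{eq:stable}, and a square-system injectivity argument for unique solvability); the paper gives no proof of its own, deferring to \cite{Ewing:2002-1,Erath:2012-1,Erath:2016-1,Erath:2017-1}. So your proposal matches the intended proof in substance, with the only caveat that the facet bookkeeping for the piecewise-Lipschitz $\A$ is sketched rather than carried out in detail, which is consistent with the level of detail the paper itself provides.
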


\subsection{Weighted-residual \textsl{a~posteriori} error estimator}
For all $v_\star\in\SS^1_0(\TT_\star)$, we define the volume residual $R_\star$ and the normal jump $J_\star$  by
\begin{alignat}{2}
  \label{eq:residual}
R_\star(v_\star)|_T&:=f-\div_{\star}(-\A\nabla v_{\star}+\b v_{\star})-\c v_{\star}
\qquad&&\text{for all }T\in\TT_\star,\\ 
\label{eq:jump}
J_\star(v_\star)|_F&:=\jump{\A\nabla v_{\star}}_F 
\qquad&&\text{for all } F\in\FF_\star^\Omega.
\end{alignat}

Here, $\div_\star$ denotes the $\TT_\star$-piecewise divergence operator,
and the normal jump reads
$\jump{\mathbf{g}}|_F:=(\mathbf{g}|_T-\mathbf{g}|_{T'})\cdot\nf$,
where $\mathbf{g}|_T$ denotes the trace of $\mathbf{g}$ from $T$ onto $F$ and 
$\nf$ points from $T$ to $T'$.
Let $\Pi_\times$ be the edgewise or elementwise integral mean operator, i.e.,
\begin{align*}
 (\Pi_\times)|_\tau=\frac{1}{|\tau|}\int_\tau v\,dx \qquad\text{for all }
 \tau\in\TT_\star\cup \FF_\star
 \text{ and all } v\in L^2(\tau).
\end{align*}
For all $T\in\TT_{\star}$, we define the local error indicators and oscillations by
\begin{align}
 \label{eq:etaToscT}
 \begin{split}
 \eta_{\star}(T,v_\star)^2 &:= h_T^2\,\norm{R_\star(v_\star)}{L^2(T)}^2
 + h_T\,\norm{J_\star(v_\star)}{L^2(\partial T\backslash\Gamma)}^2,\\
 \osc_{\star}(T,v_\star)^2 
  &:= h_T^2\,\norm{(1-\Pi_{\star})R_\star(v_\star)}{L^2(T)}^2
    + h_T\,\norm{(1-\Pi_{\star})J_\star(v_\star)}{L^2(\partial T\backslash\Gamma)}^2.
 \end{split}
\end{align}
Then the error estimator $\eta_\star$ and the oscillations $\osc_\star$ are defined by
\begin{align}
 \label{eq:etaosc}
 \eta_\star(v_\star)^2:=\sum_{T\in\TT_\star}\eta_{\star}(T,v_\star)^2 \qquad\text{and}\qquad
 \osc_\star^2(v_\star):=\sum_{T\in\TT_\star}\osc_{\star}(T,v_\star)^2.
\end{align}
To abbreviate notation, we write $\eta_{\star} := \eta_{\star}(u_\star)$ and $\osc_{\star} := \osc_{\star}(u_\star)$.
The following proposition is proved, 
e.g., in~\cite{Carstensen:2005-1,Erath:2013-1};

\begin{proposition}[reliability and efficiency]
\label{prop:reliability-efficiency}
The residual error estimator $\eta_\star$ satisfies
\begin{align}\label{eq:reliableefficient}
 C_{\rm rel}^{-1}\norm{u-u_\star}{H^1(\Omega)}^2 \le \,\eta_\star^2\le\, 
 C_{\rm eff}\big(\norm{u-u_\star}{H^1(\Omega)}^2 + \osc_\star^2),
\end{align}
where $C_{\rm rel},C_{\rm eff}>0$ depend only on 
the $\sigma$-shape regularity  of $\TT_\star$, the data assumptions~\cref{eq:A,eq:bcestimate1}, and $\Omega$. 
\qed
\end{proposition}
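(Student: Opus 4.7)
The proof falls into two essentially independent parts, following the Verfürth-type residual analysis suitably adapted to FVM.

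\textbf{Reliability.} I would begin from ellipticity, $\Cell \|u-u_\star\|_{H^1(\Omega)}^2 \le \AA(e,e)$ with $e:=u-u_\star$, and rewrite $\AA(e,\cdot) = (f,\cdot) - \AA(u_\star,\cdot)$. Split $e = (e - I_\star e) + I_\star e$, where $I_\star : H^1_0(\Omega) \to \SS^1_0(\TT_\star)$ is a Scott--Zhang quasi-interpolation with the standard local approximation $\|e - I_\star e\|_{L^2(T)} \lesssim h_T \|\nabla e\|_{L^2(\omega_\star(T))}$ and $\|e - I_\star e\|_{L^2(F)} \lesssim h_T^{1/2}\|\nabla e\|_{L^2(\omega_\star(T))}$. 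For the $(e - I_\star e)$ contribution, I would integrate by parts elementwise in $\AA(u_\star,\cdot)$; continuity of $u_\star$ and $\b$ across interior facets eliminates the $\b u_\star$-jump, leaving exactly the volume residual $R_\star(u_\star)$ and the diffusive jump $J_\star(u_\star) = \jump{\A \nabla u_\star}$, so that Cauchy--Schwarz produces $\eta_\star \|\nabla e\|_{L^2(\Omega)}$. For the $I_\star e$ contribution, I would insert the FVM equation~\cref{eq:disc_systemfvem} with the admissible test function $\II_\star^* I_\star e \in \PP^0_0(\TT_\star^*)$ to obtain
\[
\AA(e, I_\star e)
 = (f,\, I_\star e - \II_\star^* I_\star e)
 + \bigl[\AA_\star(u_\star, \II_\star^* I_\star e) - \AA(u_\star, I_\star e)\bigr].
\]
The first term is controlled by invoking \cref{eq1:proplindual}, which annihilates piecewise constants and turns it into a weighted oscillation absorbed into $\osc_\star$; the second is bounded by \cref{lem:bilinearcomp} as $\Cbil \sum_T h_T \|u_\star\|_{H^1(T)} \|I_\star e\|_{H^1(T)}$.

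\textbf{Efficiency.} Here the classical bubble-function technique of Verfürth applies essentially verbatim, because the FVM enters only through the \emph{definition} of the residual, not through a Galerkin identity. For fixed $T \in \TT_\star$, I would test $\AA(e,\cdot) = (f,\cdot) - \AA(u_\star,\cdot)$ against $\phi_T := b_T \,\Pi_\star R_\star(u_\star)$, where $b_T$ is the standard element bubble; combined with the polynomial equivalence $\|b_T^{1/2} \Pi_\star R_\star\|_{L^2(T)}^2 \simeq \|\Pi_\star R_\star\|_{L^2(T)}^2$ and inverse estimates $\|\nabla \phi_T\|_{L^2(T)} \lesssim h_T^{-1}\|\phi_T\|_{L^2(T)}$, this yields $h_T \|\Pi_\star R_\star\|_{L^2(T)} \lesssim \|\nabla e\|_{L^2(T)} + \osc_\star(T,u_\star)$. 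The analogous facet-bubble argument with $\phi_F := b_F \Pi_\star J_\star(u_\star)$, extended into the two adjacent elements, gives $h_T^{1/2}\|\Pi_\star J_\star\|_{L^2(F)} \lesssim \|\nabla e\|_{L^2(\omega_\star(T))} + \osc_\star(\omega_\star(T),u_\star)$. Adding back the non-projected parts via the triangle inequality costs precisely the oscillation contributions in~\cref{eq:etaToscT}, producing $\eta_\star^2 \lesssim \|u-u_\star\|_{H^1(\Omega)}^2 + \osc_\star^2$.

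\textbf{Main obstacle.} The only genuinely non-standard step is the reliability bound, where the absence of a Galerkin orthogonality for FVM forces the consistency error of \cref{lem:bilinearcomp} into the estimate. Its bound $\sum_T h_T \|u_\star\|_{H^1(T)} \|I_\star e\|_{H^1(T)}$ is not itself a residual quantity; the way out is to exploit the fine-mesh hypothesis $\Cell - \Cbil \|h_\star\|_{L^\infty(\Omega)} > 0$ already required for well-posedness in \cref{lem:bilinearcomp}, splitting $u_\star = u - e$ and absorbing the resulting $\|e\|_{H^1(\Omega)}$ factor into the left-hand side. Everything else transfers with no surprises from the standard FEM a~posteriori theory, which is why~\cite{Carstensen:2005-1,Erath:2013-1} already cover the present situation.
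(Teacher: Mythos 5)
Your efficiency half is the standard Verf\"urth bubble-function argument and is indeed how the cited references \cite{Carstensen:2005-1,Erath:2013-1} proceed (the paper itself does not reprove \cref{prop:reliability-efficiency} but delegates to them); since the FVM enters only through the definition of $R_\star$ and $J_\star$, that part is fine as sketched.

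The reliability half, however, has a genuine gap, and it sits exactly where you locate the ``main obstacle''. Writing $e:=u-u_\star$ and treating $\AA(e,\II_\star e)$ via \cref{lem:bilinearcomp} leaves you with the consistency term $\Cbil\sum_{T}h_T\norm{u_\star}{H^1(T)}\norm{\II_\star e}{H^1(T)}$, which is not a residual quantity, and your proposed repair (split $u_\star=u-e$ and absorb using the fine-mesh hypothesis) only disposes of the $\norm{e}{H^1(\Omega)}$-part: after absorption you are still left with a term of size $\norm{h_\star}{L^\infty(\Omega)}\norm{u}{H^1(\Omega)}\norm{e}{H^1(\Omega)}$, and $\norm{h_\star}{L^\infty(\Omega)}\norm{u}{H^1(\Omega)}$ is not bounded by $\eta_\star$ (nor by $\osc_\star$) with constants of the stated dependencies, so the claimed bound $\Crel^{-1}\norm{u-u_\star}{H^1(\Omega)}^2\le\eta_\star^2$ does not follow; moreover, any such absorption would make $\Crel$ depend on the mesh-size threshold, which the proposition explicitly excludes. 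A smaller instance of the same problem is your term $(f,\II_\star e-\II_\star^*\II_\star e)_\Omega$: via \cref{eq1:proplindual} it yields $h_T\norm{(1-\Pi_\star)f}{L^2(T)}$, which is the oscillation of $f$, not of $R_\star(u_\star)$, and cannot simply be ``absorbed into $\osc_\star$''.

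The repair is to avoid the lossy $O(h)$ comparison of \cref{lem:bilinearcomp} altogether and use the \emph{exact} residual representation that encodes the FVM scheme. Keeping $(f,\cdot)_\Omega$ and the bracket $\AA_\star(u_\star,\II_\star^*\II_\star e)-\AA(u_\star,\II_\star e)$ together and integrating by parts elementwise, they combine into $\sum_{T}\product{R_\star(u_\star)}{\II_\star e-\II_\star^*\II_\star e}_T+\sum_{F}\product{J_\star(u_\star)}{\II_\star e-\II_\star^*\II_\star e}_F$; this is precisely the mechanism of \cref{lem:galerkinosc}, which gives $|\AA(e,\II_\star e)|\le\Cosc\,\osc_\star\norm{\II_\star e}{H^1(\Omega)}\lesssim\eta_\star\norm{e}{H^1(\Omega)}$ (since $\osc_\star\le\eta_\star$ termwise), with constants depending only on shape regularity -- no fine-mesh assumption needed. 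Equivalently, and this is the route of \cite{Carstensen:2005-1,Erath:2013-1}: use that $\AA(e,v)=\sum_T\product{R_\star(u_\star)}{v}_T+\sum_F\product{J_\star(u_\star)}{v}_F$ for all $v\in H^1_0(\Omega)$, take $v=e$, and subtract the $L^2$-orthogonality \cref{eq1:lem:discdefect} with $v_\star^*=\II_\star^*\II_\star e$; then \cref{lem:proplindual} and the Scott--Zhang approximation properties bound everything by $\eta_\star\norm{\nabla e}{L^2(\Omega)}$ directly. The discrete scheme enters only through \cref{eq1:lem:discdefect}, never through a comparison of $\AA$ and $\AA_\star$, which is what keeps $\Crel$ independent of the mesh size.
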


Note that a robust variant of this estimator with respect to an energy norm
is found and analyzed in~\cite[Theorem~4.9, Theorem~6.3, 
and Remark~6.1]{Erath:2013-1}, where we additionally require the assumption
$\norm{\div\,\b +\c}{L^{\infty}(\Omega)}\leq C (\frac{1}{2}\div\,\b+\c)$ with $C>0$.
One of the key ingredients of \cref{prop:reliability-efficiency} is~\cref{eq1:lem:discdefect} of 
the following lemma which will be employed below.
The proof of the orthogonality relation~\cref{eq1:lem:discdefect} is well-known and found, e.g.,
in~\cite{Carstensen:2005-1,Erath:2010-phd,Erath:2013-1}, whereas~\cref{eq2:lem:discdefect} is proved
in~\cite[Lemma 16]{Erath:2016-1} for arbitrary refinement of meshes and can easily be transferred
to the present model problem~\cref{eq:model}.

\begin{lemma}
\label{lem:discdefect}
Let $\TT_\diamond\in\refine(\TT_0)$ and  $\TT_{\star} \in \refine(\TT_\diamond)$. Suppose that the
discrete solutions $u_\star\in \SS_0^1(\TT_\star)$ or $u_\diamond\in \SS_0^1(\TT_\star)$ exist.
Then there holds the $L^2$-orthogonality
\begin{align}\label{eq1:lem:discdefect}
    \sum_{T\in\TT_\diamond}\product{R_{\diamond}(u_\diamond)}{v^*_\diamond}_T
    -\sum_{F\in\FF_{\diamond}^\Omega}\product{J_{\diamond}(u_\diamond)}{v^*_\diamond}_F &= 0
  \quad\text{for all }v^*_\diamond\in\PP^0_0(\TT^*_\diamond)\\
\intertext{as well as the discrete defect identity}
\label{eq2:lem:discdefect}
  \sum_{T\in\TT_\diamond}\product{R_{\diamond}(u_\diamond)}{v^*_{\star}}_T - 
  \sum_{F\in\FF_{\diamond}^\Omega}\product{J_{\diamond}(u_\diamond)}{v^*_{\star}}_F
  &= \AA_\star(u_\star-u_\diamond,v^*_{\star})
  \text{ for all }v^*_{\star}\in\PP^0_0(\TT^*_{\star}).
  \qed\hspace*{-3mm}
\end{align}
\end{lemma}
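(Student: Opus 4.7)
The plan is to prove the discrete defect identity~\cref{eq2:lem:discdefect} first; the $L^2$-orthogonality~\cref{eq1:lem:discdefect} then drops out as the special case $\TT_\star=\TT_\diamond$ (whence $u_\star=u_\diamond$ and the right-hand side vanishes) together with the inclusion $\PP^0_0(\TT^*_\diamond)\subseteq\PP^0_0(\TT^*_\star)$. Two structural facts underpin the argument: (i) $u_\diamond\in\SS_0^1(\TT_\diamond)$ is $\TT_\diamond$-piecewise affine and globally continuous, so both $\A\nabla u_\diamond$ and $\b u_\diamond$ admit well-defined two-sided traces on every $F\in\FF_\diamond$; (ii) by the explicit construction of the dual mesh via centroids and facet midpoints, $\partial V_i^\star$ lies strictly inside the primal elements of $\TT_\star$, hence also of $\TT_\diamond$, so $\partial V_i^\star$ never shares a set of positive $(d-1)$-measure with any coarse facet $F\in\FF_\diamond$.

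Fix $v_\star^*\in\PP^0_0(\TT_\star^*)$ and unfold the definition of $R_\diamond(u_\diamond)$:
\begin{align*}
 \sum_{T\in\TT_\diamond}\product{R_\diamond(u_\diamond)}{v_\star^*}_T
 = \product{f}{v_\star^*}_\Omega
 -\sum_{T\in\TT_\diamond}\product{\div(-\A\nabla u_\diamond+\b u_\diamond)}{v_\star^*}_T
 -\product{\c u_\diamond}{v_\star^*}_\Omega.
\end{align*}
Because $v_\star^*$ is constant on every fine dual volume, I split each $T\in\TT_\diamond$ into $T=\bigcup_{V_i^\star\in\TT_\star^*}(V_i^\star\cap T)$ and apply the divergence theorem on each non-empty piece (permissible since $u_\diamond|_T$ is affine). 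By~(ii), the boundary decomposes up to a null set as $\partial(V_i^\star\cap T)=(\partial V_i^\star\cap T)\cup(V_i^\star\cap\partial T)$ with disjoint interiors, so the flux integrals split cleanly into two families.

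The first family, summed over $T\in\TT_\diamond$, reassembles $\int_{\partial V_i^\star}(-\A\nabla u_\diamond+\b u_\diamond)\cdot\nf\,ds$ for every interior $V_i^\star$; contributions of boundary dual volumes vanish because $v_\star^*|_{V_i^\star}=0$ whenever $V_i^\star$ abuts $\Gamma$. Together with the zeroth-order term, this produces exactly $\AA_\star(u_\diamond,v_\star^*)-\product{\c u_\diamond}{v_\star^*}_\Omega$. The second family groups the two-sided contributions on each interior facet $F\in\FF_\diamond^\Omega$; continuity of $u_\diamond$ across $F$ kills the convective term $\b u_\diamond\cdot\nf$, leaving precisely $\jump{\A\nabla u_\diamond}_F=J_\diamond(u_\diamond)|_F$ tested against $v_\star^*|_F$, while boundary facets again contribute nothing thanks to $v_\star^*|_\Gamma=0$. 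Collecting the signs yields
\begin{align*}
 \sum_{T\in\TT_\diamond}\product{R_\diamond(u_\diamond)}{v_\star^*}_T
 -\sum_{F\in\FF_\diamond^\Omega}\product{J_\diamond(u_\diamond)}{v_\star^*}_F
 = \product{f}{v_\star^*}_\Omega-\AA_\star(u_\diamond,v_\star^*),
\end{align*}
and invoking the FVM equation~\cref{eq:disc_systemfvem} on $\TT_\star$, i.e.\ $\product{f}{v_\star^*}_\Omega=\AA_\star(u_\star,v_\star^*)$, rewrites the right-hand side as $\AA_\star(u_\star-u_\diamond,v_\star^*)$, which is~\cref{eq2:lem:discdefect}.

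The main obstacle is the careful bookkeeping of the boundary decomposition across two non-matching meshes: the fine dual mesh $\TT_\star^*$ is generally not aligned with the coarse primal mesh $\TT_\diamond$. Observation~(ii) is what guarantees that $\partial V_i^\star$ and $\partial T$ meet only on a null set, so that divergence-theorem boundary terms genuinely decouple into a ``dual-flux'' part (which rebuilds $\AA_\star(u_\diamond,\cdot)$) and a ``primal-facet'' part (which yields the jumps $J_\diamond(u_\diamond)$). Once this separation is in place, the cancellation of the convective contribution $\b u_\diamond\cdot\nf$ across interior facets and the vanishing of all boundary-touching terms are immediate consequences of the continuity of $u_\diamond$ and the zero boundary condition on $v_\star^*$.
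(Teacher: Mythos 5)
Your proof is correct and follows essentially the same route as the argument the paper delegates to the literature (cf.\ \cite[Lemma~16]{Erath:2016-1} and \cite{Carstensen:2005-1,Erath:2013-1}): split each coarse element along the fine dual volumes, apply the divergence theorem, reassemble the dual-boundary fluxes (together with the reaction term) into $\AA_\star(u_\diamond,\cdot)$ and the coarse-facet contributions into $J_\diamond(u_\diamond)$ (the convective part cancelling by continuity of $u_\diamond$), and conclude with the fine-mesh FVM equation~\cref{eq:disc_systemfvem}, with \cref{eq1:lem:discdefect} recovered from the case $\TT_\star=\TT_\diamond$. One small caveat: the parenthetical appeal to the inclusion $\PP^0_0(\TT^*_\diamond)\subseteq\PP^0_0(\TT^*_\star)$ is superfluous (for $\TT_\star=\TT_\diamond$ the two spaces coincide, which is all you need) and would in general be false for proper refinements, since the dual meshes are not nested --- a point the paper itself stresses in connection with discrete reliability.
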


\subsection{Comparison result and \textsl{a~priori} error estimate}
The following proposition states that the FVM error estimator is equivalent to the 
optimal \emph{total error} (i.e., error plus oscillations) and so improves \cref{prop:reliability-efficiency}.
The result is first proved in~\cite{Erath:2016-1} for $\b=0$ and $\c=0$ and generalized to the present model problem in~\cite{Erath:2017-1}.

\begin{proposition}\label{theorem:cea}
Let $\TT_\star$ be sufficiently fine such that
$\Cell-\Cbil\norm{h_\star}{L^\infty(\Omega)}>0$ with $\Cell$ and $\Cbil$ from~\cref{eq:enorm:equivalent} 
and~\cref{eq:bilinearcomp}, respectively. 
Then it holds that
\begin{align}\label{eq:totalerror}
 \begin{split}
 C_1^{-1}\,\eta_\star 
 \le \min_{v_\star\in\SS^1_0(\TT_\star)} \big(\norm{u-v_\star}{H^1(\Omega)} + \osc_\star(v_\star)\big) 
 \le \norm{u-u_\star}{H^1(\Omega)} + \osc_\star
 \le C_1\,\eta_\star.
 \end{split}
\end{align}
Moreover, if $u_\star^{\rm FEM}\in\SS^1_0(\TT_\star)$ denotes the FEM solution of
 $\AA(u_\star^{\rm FEM},w_\star) = (f,w_\star)_\Omega$
 for all $w_\star\in\SS^1_0(\TT_\star)$,
it holds that
\begin{align*}
 C_2^{-1}\,\big(\norm{u-u_\star}{H^1(\Omega)} + \osc_\star\big)
 &\le \norm{u-u_\star^{\rm FEM}}{H^1(\Omega)} + \osc_\star(u_\star^{\rm FEM})\\
 &\le  C_2\,\big(\norm{u-u_\star}{H^1(\Omega)} + \osc_\star\big).
\end{align*}
The constants $C_1,C_2>0$ depend only on $\Omega$, the $\sigma$-shape regularity of $\TT_\star$, 
and the data assumptions~\cref{eq:A,eq:bcestimate1}. \qed
\end{proposition}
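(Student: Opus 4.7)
The plan is to establish the chain of inequalities in~\cref{eq:totalerror} separately, and then to deduce the FVM--FEM comparison as a consequence. The rightmost inequality $\norm{u-u_\star}{H^1(\Omega)} + \osc_\star \le C_1\,\eta_\star$ is immediate: reliability from~\cref{prop:reliability-efficiency} gives $\norm{u-u_\star}{H^1(\Omega)}\lesssim \eta_\star$, while $\osc_\star \le \eta_\star$ follows directly from the definitions in~\cref{eq:etaToscT} since $(1-\Pi_\star)$ is an $L^2$-projection. The middle inequality is obtained by the admissible choice $v_\star = u_\star$.

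The main work is the lower bound. Fix an arbitrary $v_\star \in \SS^1_0(\TT_\star)$ and aim for $\eta_\star \lesssim \norm{u - v_\star}{H^1(\Omega)} + \osc_\star(v_\star)$. Three ingredients are needed. First, \emph{estimator stability}: $|\eta_\star(u_\star) - \eta_\star(v_\star)| \lesssim \norm{u_\star - v_\star}{H^1(\Omega)}$, which follows from the triangle inequality in the definitions~\cref{eq:etaToscT} combined with elementwise inverse estimates applied to the polynomial differences $R_\star(u_\star)-R_\star(v_\star)$ and $J_\star(u_\star)-J_\star(v_\star)$. Second, a \emph{generalized efficiency bound} $\eta_\star(v_\star) \lesssim \norm{u - v_\star}{H^1(\Omega)} + \osc_\star(v_\star)$; this is a standard Verf\"urth bubble-function argument, which only uses that $u$ solves~\cref{eq:weakform} and nothing about $v_\star$, so it applies verbatim with $v_\star$ in place of the Galerkin approximation. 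Third, \emph{a defect estimate for FVM}: $\norm{u_\star - v_\star}{H^1(\Omega)} \lesssim \norm{u - v_\star}{H^1(\Omega)} + \osc_\star(v_\star) + \eta_\star(v_\star)$.

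The third ingredient is the main obstacle because the FVM solution does not satisfy a Galerkin orthogonality. I would start from discrete coercivity~\cref{eq:stable} applied to $\II_\star^*(u_\star-v_\star)\in\PP^0_0(\TT_\star^*)$:
\begin{align*}
 \Cstab \norm{u_\star - v_\star}{H^1(\Omega)}^2 \le \AA_\star(u_\star - v_\star,\II_\star^*(u_\star - v_\star)).
\end{align*}
Using~\cref{eq:disc_systemfvem} to replace $\AA_\star(u_\star,\cdot)$ by the source term $(f,\cdot)_\Omega$, and then~\cref{eq:bilinearcomp} to replace $\AA_\star(v_\star,\II_\star^*\cdot)$ by $\AA(v_\star,\cdot)$ up to a consistency error proportional to $\norm{h_\star}{L^\infty(\Omega)}\norm{v_\star}{H^1(\Omega)}\norm{u_\star - v_\star}{H^1(\Omega)}$, the right-hand side decomposes as $\AA(u - v_\star,u_\star-v_\star)$ plus an interpolation defect $(f,(\II_\star^*-1)(u_\star-v_\star))_\Omega$ plus the FVM consistency error. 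The first term is bounded by continuity of $\AA$. The interpolation defect, using the zero-mean properties~\cref{eq1:proplindual} to subtract $\Pi_\star f$ and elementwise $\TT_\star$-piecewise integration by parts to expose residuals $R_\star(v_\star)$ and jumps $J_\star(v_\star)$, is bounded via~\cref{eq2:proplindual,eq3:proplindual} by $\bigl(\eta_\star(v_\star)+\osc_\star(v_\star)\bigr)\norm{u_\star-v_\star}{H^1(\Omega)}$. The consistency error is then absorbed into the left-hand side using precisely the fineness hypothesis $\Cell-\Cbil\norm{h_\star}{L^\infty(\Omega)}>0$, and dividing by $\norm{u_\star-v_\star}{H^1(\Omega)}$ yields the claim.

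Combining the three ingredients, applying generalized efficiency once more to absorb the $\eta_\star(v_\star)$ appearing through ingredient~(iii), and passing to the infimum over $v_\star$, proves the desired lower bound. For the FVM--FEM comparison, the first part of the proposition already gives $\norm{u-u_\star}{H^1(\Omega)}+\osc_\star \simeq \min_{v_\star\in\SS^1_0(\TT_\star)}(\norm{u-v_\star}{H^1(\Omega)}+\osc_\star(v_\star))$. The same chain holds for the FEM solution $u_\star^{\rm FEM}$ by simpler arguments: C\'ea's lemma provides quasi-optimality in $H^1$, generalized efficiency controls $\osc_\star(u_\star^{\rm FEM})$ by $\norm{u-u_\star^{\rm FEM}}{H^1(\Omega)}+\osc_\star(v_\star)+\norm{u_\star^{\rm FEM}-v_\star}{H^1(\Omega)}$, and the last term is absorbed by C\'ea and the triangle inequality. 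Both total errors are therefore equivalent to the same minimum, and hence to each other.
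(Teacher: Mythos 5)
Your overall architecture (estimator stability, generalized efficiency for arbitrary $v_\star$, reliability, and a C\'ea-type defect estimate for the FVM solution) is the right one and parallels the route of~\cite{Erath:2016-1,Erath:2017-1}, to which the paper defers for the proof. The FEM comparison at the end is also fine. However, your proof of the third ingredient --- the defect estimate $\norm{u_\star-v_\star}{H^1(\Omega)}\lesssim\norm{u-v_\star}{H^1(\Omega)}+\osc_\star(v_\star)+\eta_\star(v_\star)$, which is the crux of the whole proposition --- has a genuine gap. After invoking~\cref{eq:stable} and~\cref{eq:disc_systemfvem}, you replace $\AA_\star(v_\star,\II_\star^*\,\cdot\,)$ by $\AA(v_\star,\cdot)$ via~\cref{eq:bilinearcomp}, which produces a consistency error of the form $\Cbil\,\norm{h_\star}{L^\infty(\Omega)}\norm{v_\star}{H^1(\Omega)}\norm{u_\star-v_\star}{H^1(\Omega)}$. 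This term cannot be absorbed into the left-hand side $\Cstab\norm{u_\star-v_\star}{H^1(\Omega)}^2$: the fineness hypothesis only allows absorption of terms quadratic in $u_\star-v_\star$, whereas here the offending factor is $\norm{v_\star}{H^1(\Omega)}$. Estimating $\norm{v_\star}{H^1(\Omega)}\le\norm{u}{H^1(\Omega)}+\norm{u-v_\star}{H^1(\Omega)}$ leaves an additive remainder $\norm{h_\star}{L^\infty(\Omega)}\norm{u}{H^1(\Omega)}$, which is not controlled by the best total error (on locally refined meshes $\norm{h_\star}{L^\infty(\Omega)}$ need not decay while the total error does), and it would make $C_1$ depend on $u$. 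Moreover, your treatment of the interpolation defect is inconsistent with your own splitting: once $\AA(v_\star,w_\star)$ has been spent to form $\AA(u-v_\star,w_\star)$, what remains is $\product{f}{(\II_\star^*-1)(u_\star-v_\star)}_\Omega$, which contains no $v_\star$ at all, so no elementwise integration by parts can ``expose'' $R_\star(v_\star)$ and $J_\star(v_\star)$ from it; at best you obtain oscillations of $f$, which again are only controlled up to terms of the type $\norm{h_\star}{L^\infty(\Omega)}\norm{v_\star}{H^1(\Omega)}$.

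The repair is to avoid applying~\cref{eq:bilinearcomp} to $v_\star$ altogether. Keep the FVM bilinear form and rewrite, by the elementwise divergence theorem (the same computation that underlies \cref{lem:discdefect} and \cref{lem:galerkinosc}, valid for arbitrary $v_\star\in\SS^1_0(\TT_\star)$),
\begin{align*}
 \product{f}{\II_\star^*w_\star}_\Omega-\AA_\star(v_\star,\II_\star^*w_\star)
 =\sum_{T\in\TT_\star}\product{R_\star(v_\star)}{\II_\star^*w_\star}_T
 -\sum_{F\in\FF_\star^\Omega}\product{J_\star(v_\star)}{\II_\star^*w_\star}_F ,
\end{align*}
with $w_\star=u_\star-v_\star$; then add and subtract $w_\star$, identify the $w_\star$-terms with $\AA(u-v_\star,w_\star)$, and use the zero-mean property~\cref{eq1:proplindual} together with~\cref{eq2:proplindual}--\cref{eq3:proplindual} on the $(\II_\star^*w_\star-w_\star)$-terms. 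This yields $\Cstab\norm{u_\star-v_\star}{H^1(\Omega)}^2\le \big(\CAbil\norm{u-v_\star}{H^1(\Omega)}+C\,\osc_\star(v_\star)\big)\norm{u_\star-v_\star}{H^1(\Omega)}$, i.e., the clean C\'ea-type bound without any $\eta_\star(v_\star)$ or $\norm{v_\star}{H^1(\Omega)}$ pollution; here~\cref{eq:bilinearcomp} and the fineness assumption enter only through~\cref{eq:stable}. (Alternatively, coercivity of $\AA$ combined with~\cref{eq:galerkinosc} gives $\norm{u_\star-v_\star}{H^1(\Omega)}\lesssim\norm{u-v_\star}{H^1(\Omega)}+\osc_\star$, which can then be reduced to $\osc_\star(v_\star)$ by oscillation stability and absorption for fine meshes.) With this corrected ingredient, the rest of your argument goes through.
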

As a direct consequence of \cref{theorem:cea}, 
one obtains the following {\sl convergence} result and {\sl a~priori} estimate which confirms 
first-order convergence of FVM; see again~\cite{Erath:2016-1,Erath:2017-1}.
Note that the statement even holds for $u\in H^1_0(\Omega)$, 
whereas in the literature standard FVM analysis usually requires, 
e.g., $u\in H^{1+\eps}(\Omega)$ for some $\eps>0$.

\begin{corollary}\label{corollary:cea}
Let $\{\TT_\star\}$ be a family of sufficiently fine and 
uniformly $\sigma$-shape regular triangulations. Let $u\in H^1_0(\Omega)$ 
be the solution of~\cref{eq:weakform}. Then there holds convergence
\begin{align*}
 \norm{u-u_\star}{H^1(\Omega)} + \osc_\star \to0
 \quad\text{as}\quad
 \norm{h_\star}{L^\infty(\Omega)}\to0.
\end{align*}
Moreover, additional regularity $u\in H^1_0(\Omega)\cap H^2(\Omega)$ implies first-order convergence
\begin{align*}
 \norm{u-u_\star}{H^1(\Omega)} + \osc_\star
 = \OO(\norm{h_\star}{L^\infty(\Omega)}).\qquad\qed
\end{align*}
\end{corollary}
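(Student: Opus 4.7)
The plan is to deduce both assertions directly from the C\'ea-type quasi-optimality~\cref{eq:totalerror} of \cref{theorem:cea}. That estimate implies, for every $v_\star\in\SS^1_0(\TT_\star)$, the bound
\begin{align*}
\norm{u-u_\star}{H^1(\Omega)} + \osc_\star
\le C_1\,\eta_\star
\le C_1^{2}\,\bigl(\norm{u-v_\star}{H^1(\Omega)} + \osc_\star(v_\star)\bigr),
\end{align*}
so it suffices to exhibit a specific $v_\star$ whose right-hand side vanishes at the desired rate. Throughout, I would take $v_\star$ to be the Scott--Zhang quasi-interpolant of $u$ in $\SS^1_0(\TT_\star)$, whose $H^1$-stability and standard local approximation properties (combined with the uniform $\sigma$-shape regularity of $\{\TT_\star\}$) drive the analysis.

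For the first-order rate, assume $u\in H^1_0(\Omega)\cap H^2(\Omega)$. The interpolation bound $\norm{u-v_\star}{H^1(\Omega)}\lesssim\norm{h_\star}{L^\infty(\Omega)}\,\norm{u}{H^2(\Omega)}$ is classical. For $\osc_\star(v_\star)$, I would exploit the strong form of~\cref{eq:model} (valid since $u\in H^2$) together with~\cref{eq3:A} and the assumption that the discontinuities of $\A$ are aligned with $\TT_0\subseteq\TT_\star$: the flux $\A\nabla u$ then has no normal jump across any interior facet, and on each $T\in\TT_\star$, $F\in\FF_\star^\Omega$ one obtains the rewriting
\begin{align*}
R_\star(v_\star)|_T &= -\div_\star\bigl(\A\nabla(u-v_\star)-\b(u-v_\star)\bigr)|_T + \c(u-v_\star)|_T,\\
J_\star(v_\star)|_F &= -\jump{\A\nabla(u-v_\star)}_F.
\end{align*}
Since $\nabla^2 v_\star=0$ on each $T$, elementwise interpolation estimates for $u-v_\star$ and a standard trace inequality, together with the piecewise $W^{1,\infty}$-bounds from~\cref{eq3:A,eq:bcestimate1}, yield $\osc_\star(v_\star)\lesssim\norm{h_\star}{L^\infty(\Omega)}\,\norm{u}{H^2(\Omega)}$, and the claimed first-order bound follows.

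For the general convergence with only $u\in H^1_0(\Omega)$, I would use the FEM/FVM equivalence in the second part of \cref{theorem:cea}, which reduces the claim to $\norm{u-u_\star^{\rm FEM}}{H^1(\Omega)}+\osc_\star(u_\star^{\rm FEM})\to 0$. Classical FEM theory (C\'ea's lemma together with density of the $\SS^1_0(\TT_\star)$-spaces in $H^1_0(\Omega)$) provides $\norm{u-u_\star^{\rm FEM}}{H^1(\Omega)}\to 0$; the standard FEM residual efficiency estimate $\eta_\star(u_\star^{\rm FEM})\lesssim\norm{u-u_\star^{\rm FEM}}{H^1(\Omega)} + (\text{data oscillation})$ combined with $\osc_\star(u_\star^{\rm FEM})\le\eta_\star(u_\star^{\rm FEM})$ then controls the oscillations, once one notes that the data oscillation vanishes in the limit $\norm{h_\star}{L^\infty(\Omega)}\to 0$ (a consequence of $f\in L^2(\Omega)$ and the boundedness of $\A,\b,\c$). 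The main technical point I foresee is the oscillation bound in the first-order case for the non-discrete-solution choice $v_\star$, for which the rewriting identities above, crucially using~\cref{eq3:A}, are the key tool; the remaining estimates are routine.
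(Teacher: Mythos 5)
Your proposal is correct and takes essentially the same route as the paper, which obtains the corollary as a direct consequence of the quasi-optimality \cref{eq:totalerror} in \cref{theorem:cea} and delegates the standard details to the cited references. Your choice of the Scott--Zhang interpolant, the rewriting of residual and jump in terms of $u-v_\star$ under $u\in H^2(\Omega)$, and the observation that $\osc_\star(v_\star)\to0$ as $\norm{h_\star}{L^\infty(\Omega)}\to0$ are precisely the routine arguments the paper has in mind.
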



\section{Adaptive FVM}
\label{sec:A3}%

In this section, we apply an adaptive mesh-refining algorithm for FVM. We combine
ideas from~\cite{Mekchay:2005-1} and~\cite{Erath:2016-1} to prove that adaptive FVM 
leads to linear convergence with optimal algebraic rates for the error 
estimator (and hence for the total error;  see \cref{theorem:cea}).

\subsection{Adaptive algorithm}
\label{subsec:algorithm}
%
As in~\cite{Erath:2016-1}, we employ the following adaptive algorithm:

\def\Cmark{C_{\rm mark}}
\begin{algorithm}\label{algorithm:mns}
{\bfseries Input:} Let $0<\theta'\le\theta\le1$ and $\Cmark,\Cmark'\ge1$. Let
$\TT_0$ be a conforming triangulation of $\Omega$ which resolves possible discontinuities of $\A$. \\[1mm]
{\bfseries Loop:} For $\ell=0,1,2,\dots$, iterate the following steps~{\rm(i)--(v)}:
\begin{itemize}
\item[\rm(i)] \textbf{Solve:} Compute the discrete solution 
$u_\ell\in\SS^1_0(\TT_\ell)$ from~\cref{eq:disc_systemfvem}.
\item[\rm(ii)] \textbf{Estimate:} Compute $\eta_\ell(T,u_\ell)$ and
$\osc_\ell(T,u_\ell)$ from~\cref{eq:etaToscT} for all $T\in\TT_\ell$.
\item[\rm(iii)] \textbf{Mark I:} Find $\MM_\ell^\eta\subseteq\TT_\ell$ of up to the multiplicative constant $\Cmark\ge1$ minimal cardinality which satisfies the D\"orfler marking criterion 
\begin{align}\label{eq:doerfler:eta}
 \theta\,\sum_{T\in\TT_\ell}\eta_\ell(T,u_\ell)^2 \le \sum_{T\in\MM_\ell^\eta}\eta_\ell(T,u_\ell)^2.
\end{align}
\item[\rm(iv)] \textbf{Mark II:} Find $\MM_\ell\subseteq\TT_\ell$ of up to the multiplicative constant $\Cmark'\ge1$ minimal cardinality 
which satisfies $\MM_\ell^\eta\subseteq\MM_\ell$ as well as the D\"orfler marking criterion 
\begin{align}\label{eq:doerfler:osc}
 \theta'\,\sum_{T\in\TT_\ell}\osc_\ell(T,u_\ell)^2 \le \sum_{T\in\MM_\ell}\osc_\ell(T,u_\ell)^2.
\end{align}
\item[\rm(v)] \textbf{Refine:} Generate new triangulation $\TT_{\ell+1} := \refine(\TT_\ell,\MM_\ell)$ by refinement of
all marked elements.
\end{itemize}
\smallskip
{\bfseries Output:} Adaptively refined triangulations $\TT_\ell$, 
corresponding discrete solutions $u_\ell$, estimators $\eta_\ell$, 
and data oscillations $\osc_\ell$ for $\ell\ge0$.
\end{algorithm}
Due to the lack of standard Galerkin orthogonality (see \cref{subsec:noGalerkin}),
we additionally have to mark the oscillations~\cref{eq:doerfler:osc}. 
In practice, however, this marking is negligible, since $\theta'$ can be chosen 
arbitrary small;
see~\cite[Remark 7]{Erath:2016-1} for more details.

\subsection{Quasi-Galerkin orthogonality}
\label{subsec:noGalerkin}%
\def\Cddual{C_{\rm dual}}%
Given $g\in L^2(\Omega)$, we consider the dual problem: Find $\phi\in H^1_0(\Omega)$ such that
\begin{align}\label{eq:weakformdual}
 \AA(v,\phi) = \product{g}{v}_\Omega
 \quad\text{for all }v\in H^1_0(\Omega).
\end{align}
The Lax-Milgram theorem proves existence and uniqueness of $\phi\in H^1_0(\Omega)$. 
Let $0< s\leq 1$. We suppose that the dual problem~\cref{eq:weakformdual} 
is $H^{1+s}$-regular, i.e., there exists a constant $\Cddual>0$  
such that for all $g\in L^2(\Omega)$, the solution of~\cref{eq:weakformdual} satisfies
\begin{align}\label{eq:Hs_regular}
 \phi \in H^1_0(\Omega)\cap H^{1+s}(\Omega)
 \quad\text{with}\quad
 \norm{\phi}{H^{1+s}(\Omega)} \le \Cddual\,\norm{g}{L^2(\Omega)}.
\end{align}
We refer to~\cite{grisvard} for a discussion on this regularity assumption.
The main result of this section is the following quasi-Galerkin orthogonality with respect to the operator-induced quasi 
norm from~\cref{eq:enorm:equivalent}. The proof is postponed to the end of this section.

\def\Cgal{C_{\rm gal}}%
\begin{proposition}
\label{lemma:orth}
Let $0< s\leq 1$ and suppose that the dual 
problem~\cref{eq:weakformdual} is $H^{1+s}$-regular~\cref{eq:Hs_regular}.
Let $\TT_\diamond\in\refine(\TT_0)$ and  $\TT_{\star} \in \refine(\TT_\diamond)$.
Then there exists $\Cgal>0$ such that
\begin{align}
 \label{eq:galerkin}
 \begin{split}
\enorm{u-u_\star}^2
&\le\enorm{u-u_\diamond}^2-\frac12\,\enorm{u_\star-u_\diamond}^2
+\Cgal\,\norm{h_\star}{L^\infty(\Omega)}^{2s}\eta_\star^2 
+\Cgal\,\osc_\star^2.
\end{split}
\end{align}
The constant $\Cgal>0$ depends only on $\Cddual$, $\Cosc$, $\Crel$, $\Cell$,
$\CAbil$, $\diam(\Omega)$, and $\norm{\b}{W^{1,\infty}(\Omega)}$ 
as well as on $\sigma$-shape regularity and all possible shapes of element patches in $\TT_\star$.
\end{proposition}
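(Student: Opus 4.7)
\medskip\noindent
\textbf{Proof plan.}
The starting point is the algebraic identity obtained by writing $u-u_\star=(u-u_\diamond)-(u_\star-u_\diamond)$ and expanding the bilinear form. Set $e:=u-u_\star$ and $d:=u_\star-u_\diamond$, noting that $d\in\SS^1_0(\TT_\star)$ since $\TT_\star$ refines $\TT_\diamond$ and hence $\SS^1_0(\TT_\diamond)\subseteq\SS^1_0(\TT_\star)$. Because $u-u_\diamond=e+d$, expansion of $\AA(e+d,e+d)$ yields
\begin{align*}
  \enorm{u-u_\star}^2
  = \enorm{u-u_\diamond}^2 - \enorm{d}^2 - \bigl(\AA(e,d)+\AA(d,e)\bigr),
\end{align*}
so \cref{eq:galerkin} reduces to the single estimate
$|\AA(e,d)+\AA(d,e)| \le \tfrac12\enorm{d}^2 + C\,\|h_\star\|_{L^\infty(\Omega)}^{2s}\,\eta_\star^2 + C\,\osc_\star^2$.

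The next step is to compute the cross term explicitly. Using the symmetry of $\A$ together with integration by parts on the convective contribution (valid since $e,d\in H^1_0(\Omega)$ gives $-\product{\b e}{\nabla d}_\Omega-\product{\b d}{\nabla e}_\Omega=\product{(\div\b)e}{d}_\Omega$), and then substituting $\product{\A\nabla e}{\nabla d}_\Omega=\AA(e,d)+\product{\b e}{\nabla d}_\Omega-\product{\c e}{d}_\Omega$ in the resulting expression, I obtain
\begin{align*}
  \AA(e,d)+\AA(d,e)
  = 2\,\AA(e,d) + 2\,\product{\b e}{\nabla d}_\Omega + \product{(\div\b)\,e}{d}_\Omega.
\end{align*}
The first summand is the analogue of Galerkin orthogonality; since $d\in\SS^1_0(\TT_\star)$, it is controlled via the forthcoming \cref{lem:galerkinosc} (which quantifies the FVM defect by oscillations) as $|\AA(e,d)|\lesssim\osc_\star\,\norm{d}{H^1(\Omega)}$. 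The remaining two summands are pure $L^2$-type couplings of $e$ against the discrete function $d$, and Cauchy-Schwarz together with $\b\in W^{1,\infty}(\Omega)^d$ bounds them by $\norm{e}{L^2(\Omega)}\,\norm{d}{H^1(\Omega)}$ up to the constant $\norm{\b}{W^{1,\infty}(\Omega)}$.

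Applying Young's inequality with a sufficiently small $\varepsilon>0$, every factor $\norm{d}{H^1(\Omega)}$ is absorbed into $\tfrac12\enorm{d}^2$ using the norm equivalence \cref{eq:enorm:equivalent}, leaving residual terms of the form $C_\varepsilon\bigl(\osc_\star^2+\norm{e}{L^2(\Omega)}^2\bigr)$. The $L^2$-norm of the error is then handled by the forthcoming \cref{lemma:dual}, which, using the $H^{1+s}$-regularity of the dual problem \cref{eq:weakformdual}--\cref{eq:Hs_regular} and reliability \cref{eq:reliableefficient}, delivers $\norm{u-u_\star}{L^2(\Omega)}^2 \lesssim \|h_\star\|_{L^\infty(\Omega)}^{2s}\bigl(\eta_\star^2+\osc_\star^2\bigr)$; the contribution $\|h_\star\|_{L^\infty(\Omega)}^{2s}\,\osc_\star^2$ is crudely absorbed into $\osc_\star^2$ using $\|h_\star\|_{L^\infty(\Omega)}\le\diam(\Omega)$, which produces the stated right-hand side of \cref{eq:galerkin}.

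The main obstacle is the absence of classical Galerkin orthogonality, which for a symmetric FEM argument would annihilate $\AA(e,d)$ outright. Replacing this identity by two FVM-tailored surrogates (\cref{lem:galerkinosc} and \cref{lemma:dual}) forces careful bookkeeping: the non-symmetric lower-order contributions must be rearranged so that $d$ always carries the $H^1$-norm (absorbable by Young) while $e$ only ever appears in the $L^2$-norm, since that is the sole norm enjoying the $\|h_\star\|_{L^\infty(\Omega)}^{s}$ gain from the dual regularity that is ultimately responsible for the factor $\|h_\star\|_{L^\infty(\Omega)}^{2s}$ in front of $\eta_\star^2$.
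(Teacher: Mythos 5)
Your proposal is correct and follows essentially the same route as the paper's proof: the same expansion of $\enorm{u-u_\diamond}^2$ with $e=u-u_\star$, $d=u_\star-u_\diamond$, the same integration-by-parts identity $\AA(e,d)+\AA(d,e)=2\AA(e,d)+2\product{\b e}{\nabla d}_\Omega+\product{(\div\b)e}{d}_\Omega$, control of the quasi-orthogonality term by \cref{lem:galerkinosc}, control of $\norm{u-u_\star}{L^2(\Omega)}$ by \cref{lemma:dual} together with reliability~\cref{eq:reliableefficient}, and Young's inequality to absorb $\norm{d}{H^1(\Omega)}$ into $\frac12\enorm{d}^2$ via~\cref{eq:enorm:equivalent}. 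The only (harmless) inaccuracy is your quoted form of \cref{lemma:dual}: it yields $\norm{u-u_\star}{L^2(\Omega)}^2\lesssim \norm{h_\star}{L^\infty(\Omega)}^{2s}\eta_\star^2+\osc_\star^2$ rather than $\norm{h_\star}{L^\infty(\Omega)}^{2s}\big(\eta_\star^2+\osc_\star^2\big)$, but since you immediately discard the extra $\norm{h_\star}{L^\infty(\Omega)}^{2s}$ factor on the oscillation term, this is exactly the estimate you actually use and the argument is unaffected.
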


For the FVM error, the classical Galerkin orthogonality fails, i.e.,
$\AA(u-u_\star,v_\star)\not= 0$ for some $v_\star\in\SS^1_0(\TT_\star)$. However, there holds the following estimate; 
see, e.g.,~\cite{Erath:2016-1}.

\begin{lemma}
 \label{lem:galerkinosc}
The FVM error $u-u_\star$ satisfies that
 \begin{align}
  \label{eq:galerkinosc}
  |\AA(u-u_\star,v_\star)|
  \le \Cosc\,\norm{v_\star}{H^1(\Omega)}\,\osc_{\star}
  \quad\text{for all }v_\star\in\SS^1_0(\TT_\star).
 \end{align}
 The constant $\Cosc>0$ depends only on $\sigma$-shape regularity of $\TT_{\star}$.
\end{lemma}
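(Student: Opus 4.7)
The plan is to rewrite $\AA(u-u_\star,v_\star)$ as an elementwise residual / facet-jump functional tested against $v_\star$, and then exploit the discrete $L^2$-orthogonality~\cref{eq1:lem:discdefect} against the interpolant $\II_\star^* v_\star$ to gain the oscillation structure.

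First, I would use $\AA(u,v_\star)=(f,v_\star)_\Omega$ and perform elementwise integration by parts in the $\AA(u_\star,v_\star)$-term. Because $v_\star$ vanishes on $\Gamma$, $u_\star$ is globally continuous across interior facets, and $\b\in W^{1,\infty}(\Omega)^d$ is continuous, the only surviving interfacial contribution is the conormal jump of $\A\nabla u_\star$. This produces the key identity
\begin{align*}
\AA(u-u_\star,v_\star)=\sum_{T\in\TT_\star}(R_\star(u_\star),v_\star)_T-\sum_{F\in\FF_\star^\Omega}(J_\star(u_\star),v_\star)_F.
\end{align*}

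Next, I would apply~\cref{eq1:lem:discdefect} with $v^*_\diamond:=\II_\star^* v_\star\in\PP^0_0(\TT_\star^*)$ and subtract it from the identity above. This replaces $v_\star$ by $v_\star-\II_\star^* v_\star$ on the right-hand side, i.e.,
\begin{align*}
\AA(u-u_\star,v_\star)=\sum_{T\in\TT_\star}(R_\star(u_\star),v_\star-\II_\star^* v_\star)_T-\sum_{F\in\FF_\star^\Omega}(J_\star(u_\star),v_\star-\II_\star^* v_\star)_F.
\end{align*}
By the mean-zero property~\cref{eq1:proplindual}, I may subtract the elementwise, respectively edgewise, $L^2$-means from $R_\star(u_\star)$ and $J_\star(u_\star)$ without changing the value of either sum, which turns the right-hand side into a pairing with $(1-\Pi_\star)R_\star(u_\star)$ and $(1-\Pi_\star)J_\star(u_\star)$.

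Finally, I would apply Cauchy--Schwarz together with the interpolation estimates~\cref{eq2:proplindual,eq3:proplindual} on each element (using $\sigma$-shape regularity to control $\|\nabla v_\star\|_{L^2(T)}$ on patches), and a discrete Cauchy--Schwarz in the sum over $T\in\TT_\star$, matching the resulting right-hand side with the definition~\cref{eq:etaToscT} of $\osc_\star(T,u_\star)$. The constant $\Cosc$ arising depends only on $\sigma$-shape regularity.

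The main obstacle I expect is the first step: carefully executing the integration by parts so that the jump $\jump{-\A\nabla u_\star+\b u_\star}\cdot\nf$ reduces exactly to $J_\star(u_\star)=\jump{\A\nabla u_\star}$. This uses global continuity of $\b$ and $u_\star$ to annihilate the $\b u_\star$ contribution across interior facets, and keeps the boundary contribution zero via $v_\star|_\Gamma=0$. Everything else is standard Cauchy--Schwarz/interpolation bookkeeping that is already prepared by \cref{lem:proplindual} and \cref{lem:discdefect}.
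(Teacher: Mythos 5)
Your proposal is correct and follows essentially the same route as the paper's proof: the elementwise residual/jump representation of $\AA(u-u_\star,v_\star)$ (which the paper cites as a standard calculation rather than re-deriving by integration by parts), insertion of $\II_\star^*v_\star$ via the orthogonality~\cref{eq1:lem:discdefect}, the mean-zero property~\cref{eq1:proplindual} to pass to $(1-\Pi_\star)R_\star$ and $(1-\Pi_\star)J_\star$, and Cauchy--Schwarz with \cref{eq2:proplindual,eq3:proplindual}. Your sign bookkeeping on the jump term is consistent with \cref{eq1:lem:discdefect}, so no gap remains.
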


\begin{proof}
Standard calculations (see,
e.g.,~\cite[Theorem~4.9]{Erath:2013-1}) show that
\begin{align*}
  \AA(u-u_\star,v_\star)  
  &=\sum_{T\in\TT_\star}\int_T R_\star(u_\star)\, v_\star\,dx
  +\sum_{F\in\FF_\star^\Omega}\int_{F}J_\star(u_\star)\,v_\star\,ds.
\end{align*}
Together with~\cref{eq1:lem:discdefect} for $v^*_\star=\II_\star^* v_\star\in\PP^0_0(\TT_\star^*)$,
this leads to
\begin{align*}
  \AA(u-u_\star,v_\star) = \sum_{T\in\TT_\star}\int_T R_\star(u_\star)\,(v_\star-v_\star^*)\,dx
  +\sum_{F\in\FF_\star^\Omega}\int_{F}J_\star(u_\star)\,(v_\star-v_\star^*)\,ds.
\end{align*}
We apply~\cref{eq1:proplindual} for the involved integrals and obtain that
\begin{align*}
  \AA(u-u_\star,v_\star) &= \sum_{T\in\TT_\star}\int_T (R_\star(u_\star)-\Pi_{\star}R_\star(u_\star))\,(v_\star-v_\star^*)\,dx\\
  &\qquad+\sum_{F\in\FF_\star^\Omega}\int_{F}(J_\star(u_\star)
  -\Pi_{\star}J_\star(u_\star))\,(v_\star-v_\star^*)\,ds.
\end{align*}
The Cauchy-Schwarz inequality and \cref{eq2:proplindual}--\cref{eq3:proplindual}
conclude the proof.
\end{proof}

\begin{lemma}\label{lemma:dual}
Let $0< s\leq 1$ and suppose that the dual problem~\cref{eq:weakformdual} 
is $H^{1+s}$-regular~\cref{eq:Hs_regular}. Then the FVM error satisfies
 \begin{align}
  \label{eq:L2fvm}
 \Cdual^{-1}\,\norm{u-u_\star}{L^2(\Omega)}^2\le 
 \norm{h_\star}{L^\infty(\Omega)}^{2s}\norm{u-u_\star}{H^1(\Omega)}^2 + \osc_{\star}^2. 
 \end{align}
 The constant $\Cdual>0$ depends only on the $\sigma$-shape regularity of $\TT_{\star}$, $\diam(\Omega)$, $\CAbil$,
 and $\Cddual$ as well as on all possible shapes of element patches in $\TT_\star$.
\end{lemma}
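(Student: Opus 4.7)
The approach is a standard Aubin--Nitsche duality argument, modified to accommodate the FVM setting where classical Galerkin orthogonality is unavailable. The missing ingredient supplied by the non-symmetric FVM machinery will be \cref{lem:galerkinosc}, which quantifies the orthogonality defect in terms of oscillations.

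First I would set $g := u - u_\star \in L^2(\Omega)$ and let $\phi \in H^1_0(\Omega)$ be the solution of the dual problem~\cref{eq:weakformdual} with this right-hand side. Testing with $v = u - u_\star$ gives the representation
\begin{align*}
 \norm{u-u_\star}{L^2(\Omega)}^2 = \AA(u-u_\star,\phi),
\end{align*}
and the $H^{1+s}$-regularity hypothesis~\cref{eq:Hs_regular} supplies
$\norm{\phi}{H^{1+s}(\Omega)} \le \Cddual\,\norm{u-u_\star}{L^2(\Omega)}$.

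Next, I would introduce a Scott--Zhang-type quasi-interpolation operator $J_\star : H^1_0(\Omega) \to \SS^1_0(\TT_\star)$, which under $\sigma$-shape regularity admits the fractional approximation estimate
\begin{align*}
 \norm{\phi - J_\star\phi}{H^1(\Omega)}
 \lesssim \norm{h_\star}{L^\infty(\Omega)}^{s}\,\norm{\phi}{H^{1+s}(\Omega)}
 \quad\text{and}\quad
 \norm{J_\star\phi}{H^1(\Omega)} \lesssim \norm{\phi}{H^1(\Omega)}.
\end{align*}
Splitting $\AA(u-u_\star,\phi) = \AA(u-u_\star,\phi - J_\star\phi) + \AA(u-u_\star,J_\star\phi)$, the first summand is bounded by continuity of $\AA$ (constant $\CAbil$) together with the interpolation estimate, producing the term $\norm{h_\star}{L^\infty(\Omega)}^{s}\,\norm{u-u_\star}{H^1(\Omega)}\,\norm{u-u_\star}{L^2(\Omega)}$. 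The second summand is the crucial one: since $J_\star\phi \in \SS^1_0(\TT_\star)$ is a discrete test function, \cref{lem:galerkinosc} applies and yields $|\AA(u-u_\star,J_\star\phi)| \le \Cosc\,\norm{J_\star\phi}{H^1(\Omega)}\,\osc_\star \lesssim \osc_\star\,\norm{u-u_\star}{L^2(\Omega)}$, where $\norm{\phi}{H^1(\Omega)}$ is controlled by $\norm{u-u_\star}{L^2(\Omega)}$ via the Poincar\'e--Friedrichs inequality (source of the $\diam(\Omega)$ dependence) applied to $\phi$ and the dual-problem stability bound.

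Combining both estimates and dividing by $\norm{u-u_\star}{L^2(\Omega)}$ (the case when it vanishes being trivial) gives
\begin{align*}
 \norm{u-u_\star}{L^2(\Omega)}
 \lesssim \norm{h_\star}{L^\infty(\Omega)}^{s}\,\norm{u-u_\star}{H^1(\Omega)} + \osc_\star,
\end{align*}
and squaring via $(a+b)^2 \le 2(a^2+b^2)$ yields~\cref{eq:L2fvm}. I do not anticipate a genuine obstacle in this proof; the only delicate point is that the $L^2$-duality argument cannot proceed via standard Galerkin orthogonality, and one must route the discrete term through \cref{lem:galerkinosc} instead, which is precisely why this lemma was established first. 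Choosing an interpolant that respects homogeneous Dirichlet boundary values while still delivering fractional-order approximation in $H^{1+s}$ is routine (Scott--Zhang) but needs to be acknowledged.
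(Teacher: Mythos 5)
Your proposal is correct and follows essentially the same route as the paper's proof: a duality argument with $g=u-u_\star$, a Scott--Zhang splitting $\AA(u-u_\star,\phi-\II_\star\phi)+\AA(u-u_\star,\II_\star\phi)$, the fractional interpolation estimate for the first term, and \cref{lem:galerkinosc} plus $H^1$-stability for the second, concluded via the $H^{1+s}$-regularity bound. The only difference is cosmetic: the paper spends its Step~1 actually deriving the fractional approximation estimate by interpolating between the $t=0$ and $t=1$ operator-norm bounds, which you assert as a known property of the Scott--Zhang operator.
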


\def\Csz{C_{\rm sz}}
\begin{proof}
The proof is split into two steps.

{\bf Step~1.}
Let $\II_\star:H^1(\Omega)\to\SS^1(\TT_\star)$ be the Scott-Zhang projector~\cite{scottzhang}. Recall the following properties of $\II_\star$ for all $v\in H^1(\Omega)$ 
and $v_\star\in\SS^1(\TT_\star)$, and all $T\in\TT_\star$:
\begin{itemize}
\item $\II_\star$ has a local projection property, i.e., $(\II_\star v)|_T = v_\star|_T$ 
if $v|_{\omega_\star(T)}=v_\star|_{\omega_\star(T)}$;
\item $\II_\star$ preserves discrete boundary data, i.e., $v|_\Gamma = v_\star|_\Gamma$ 
implies that $(\II_\star v)|_\Gamma = v|_\Gamma$;
\item $\II_\star$ is locally $H^1$-stable, i.e., 
$\norm{\nabla\II_\star v}{L^2(T)} \le \Csz\, \norm{\nabla v}{H^1(\omega_\star(T))}$;
\item $\II_\star$ has a local approximation property, 
i.e., $\norm{v-\II_\star v}{L^2(T)}\le \Csz\,h_T\,\norm{\nabla v}{H^1(\omega_\star(T))}$.
\end{itemize}
The constant $\Csz>0$ depends only on $\sigma$-shape regularity of $\TT_\star$.
In particular,
\begin{align*}
\norm{v-\II_\star v}{H^1(\Omega)}\lesssim \norm{v}{H^1(\Omega)}\quad \text{for all } v\in H^1(\Omega),
\end{align*} 
where the hidden constant depends only on $\Csz$ and $\diam(\Omega)$. 
With the local projection property of $\II_\star$, we may apply the Bramble-Hilbert lemma. 
For $v\in H^2(\Omega)$, scaling arguments then prove that 
\begin{align} 
 \norm{v-\II_\star v}{H^1(T)} \lesssim \diam(\omega_\star(T))\,\norm{v}{H^2(\omega_\star(T))}
 \quad\text{for all }T\in\TT_\star,
\end{align}
where the hidden constant depends only on the shape of $\omega_\star(T)$ 
and on the operator norm of $A:=1-\II_\star$ 
(and hence on $\diam(\Omega)$ and $\Csz$). Altogether, this proves the operator norm estimates
\begin{align}\label{eq:interpolation}
 \norm{A:=1-\II_\star:H^{1+t}(\Omega)\to H^1(\Omega)}{}
 \le C\,\norm{h_\star}{L^\infty(\Omega)}^t
 \quad\text{for }t\in\{0,1\},
\end{align}
where $C>0$ depends only on $\Csz$, $\diam(\Omega)$, and all possible shapes of element patches in $\TT_\star$. 
Interpolation arguments~\cite{MR0482275} conclude that~\cref{eq:interpolation} holds for all $0\le t\le 1$.
For $t=s$, this proves that
\begin{align}\label{eq2:interpolation}
 \norm{v-\II_\star v}{H^1(\Omega)} \le C\,\norm{h_\star}{L^\infty(\Omega)}^s\,\norm{v}{H^{1+s}(\Omega)}
 \quad\text{for all }v\in H^{1+s}(\Omega).
\end{align}

{\bf Step~2.}
With $g=v=u-u_\star$ in~\cref{eq:weakformdual}, it holds that
\begin{align*}
 \norm{u-u_\star}{L^2(\Omega)}^2=\AA(u-u_\star,\phi)
 = \AA(u-u_\star,\phi-\II_\star\phi) + \AA(u-u_\star,\II_\star\phi).
\end{align*}
Since we suppose $\phi \in H^{1+s}(\Omega)$,
the first summand is bounded by~\cref{eq2:interpolation}. This yields that
\begin{align*}
 \AA(u-u_\star,\phi-\II_\star\phi)
 &\lesssim \norm{u-u_\star}{H^1(\Omega)}\norm{\phi-\II_\star\phi}{H^1(\Omega)}\\
 &\lesssim \norm{h_\star}{L^\infty(\Omega)}^s\,\norm{u-u_\star}{H^1(\Omega)}\norm{\phi}{H^{1+s}(\Omega)},
\end{align*}
where the hidden constants depends only on $\CAbil$, $\Csz$, and $\diam(\Omega)$.
The second summand is bounded by~\cref{eq:galerkinosc} and $H^1$-stability of $\II_\star$. This yields that
\begin{align*}
 \AA(u-u_\star,\II_\star\phi)
 \lesssim \osc_\star\,\norm{\II_\star\phi}{H^1(\Omega)}
 \lesssim \osc_\star\,\norm{\phi}{H^1(\Omega)}
 \le \osc_\star\,\norm{\phi}{H^{1+s}(\Omega)},
\end{align*}
where the hidden constant depends only on $\Cosc$, $\Csz$ and $\diam(\Omega)$.
Combining the latter three estimates with $H^{1+s}$-regularity~\cref{eq:Hs_regular}, we prove that
\begin{align*}
 \norm{u-u_\star}{L^2(\Omega)}^2
 &\lesssim \big(\norm{h_\star}{L^\infty(\Omega)}^s\,\norm{u-u_\star}{H^1(\Omega)} + \osc_\star\big)\,
 \norm{\phi}{H^{1+s}(\Omega)}
 \\&
 \lesssim \big(\norm{h_\star}{L^\infty(\Omega)}^s\,\norm{u-u_\star}{H^1(\Omega)} + \osc_\star\big)\,\norm{u-u_\star}{L^2(\Omega)},
\end{align*}
where the hidden constant depends additionally on $\Cddual$.
This concludes the proof.
\end{proof}

\begin{proof}[Proof of~\cref{lemma:orth}]
Recall that $\AA(v,w) = \product{\A\nabla v}{\nabla w}_\Omega - \product{\b v}{\nabla w}_\Omega
+\product{c v}{w}_\Omega$ and thus
$\AA(w,v) = \product{\A\nabla w}{\nabla v}_\Omega - \product{\b w}{\nabla v}_\Omega
+\product{c w}{v}_\Omega$.
 For $v,w\in H^1_0(\Omega)$, integration by parts proves that
\begin{align*}
 -\product{\b w}{\nabla v}_\Omega
 = \product{\b \cdot \nabla w}{v}_\Omega
 + \product{\div(\b)\,w}{v}_\Omega
\end{align*}
and hence
\begin{align*}
\AA(v,w)+\AA(w,v)=
2\AA(v,w)+2\product{v}{\b\cdot\nabla w}_\Omega
+\product{\div(\b)\,v}{w}_\Omega.
\end{align*}
By definition of $\enorm\cdot$, this proves that
\begin{align*}
 \enorm{v+w}^2
 &= \enorm{v}^2 + \enorm{w}^2 + \AA(v,w) + \AA(w,v)
 \\&
 = \enorm{v}^2 + \enorm{w}^2 + 2\AA(v,w)+2\product{v}{\b \cdot\nabla w}_\Omega +\product{\div(\b)\,v}{w}_\Omega.
\end{align*}
This leads to
\begin{align*}
\enorm{v}^2 = \enorm{v+w}^2 - \enorm{w}^2 - 2\AA(v,w) - 2\product{v}{\b \cdot\nabla w}_\Omega -\product{\div(\b)\,v}{w}_\Omega.
\end{align*}
With $C_1 := \Cell^{-1}\,(2\norm{\b}{L^\infty(\Omega)}+\norm{\div(\b)}{L^\infty(\Omega)})^2$, 
the Young inequality $ab\le \frac{1}{4}\, a^2 + b^2$ and norm equivalence~\cref{eq:enorm:equivalent} prove that
\begin{align*}
 - 2\product{v}{\b \cdot\nabla w}_\Omega -\product{\div(\b)\,v}{w}_\Omega
 &\le \norm{v}{L^2(\Omega)}\norm{w}{H^1(\Omega)}\,\big(2\norm{\b}{L^\infty(\Omega)}+\norm{\div(\b)}{L^\infty(\Omega)}\big)
 \\&
 \le \frac{1}{4}\,\enorm{w}^2 + C_1\,\norm{v}{L^2(\Omega)}^2.
\end{align*}
Choose $v = u-u_\star$ as well as $w=u_\star-u_\diamond$. So far, we have shown that
\begin{align*}
\enorm{u-u_\star}^2 
&\le \enorm{u-u_\diamond}^2 - \frac{3}{4}\,\enorm{u_\star-u_\diamond}^2 - 2\AA(u-u_\star,u_\star-u_\diamond)
+ C_1\,\norm{u-u_\star}{L^2(\Omega)}^2.
\end{align*}
We apply~\cref{eq:galerkinosc}, 
norm equivalence~\cref{eq:enorm:equivalent}, and the Young inequality $2ab\le \frac{1}{4}\, a^2+4b^2$ to see that
\begin{align*}
&- 2\AA(u-u_\star,u_\star-u_\diamond)
\le 2\,\Cosc\,\norm{u_\star-u_\diamond}{H^1(\Omega)}\,\osc_{\star}
\\&\qquad\qquad
\le 2\,\Cosc\Cell^{-1/2}\,\enorm{u_\star-u_\diamond}\,\osc_{\star}
\le \frac{1}{4}\,\enorm{u_\star-u_\diamond}^2 + 4\,\Cosc^2\Cell^{-1}\,\osc_{\star}^2.
\end{align*}
Next, \cref{lemma:dual} and reliability~\cref{eq:reliableefficient} lead to
\begin{align*}
\Cdual^{-1}\,\norm{u-u_\star}{L^2(\Omega)}^2
\le \norm{h_\star}{L^\infty(\Omega)}^{2s}\,\norm{u-u_\star}{H^1(\Omega)}^2
 +\osc_{\star}^2
\le \Crel\,\norm{h_\star}{L^\infty(\Omega)}^{2s}\,\eta_\star^2
 +\osc_{\star}^2.
\end{align*}
Combining the latter three estimates, we prove that
\begin{align*}
 \enorm{u-u_\star}^2 
 &\le \enorm{u-u_\diamond}^2 - \frac{1}{2}\,\enorm{u_\star-u_\diamond}^2 
  \\&\qquad 
  + C_1\Cdual\Crel\,\norm{h_\star}{L^\infty(\Omega)}^{2s}\,\eta_\star^2
  + \big(4\,\Cosc^2\Cell^{-1} + C_1\Cdual\big)\,\osc_{\star}^2.
\end{align*}
Choosing $\Cgal = \max\{C_1\Cdual\Crel\,,\,4\Cosc^2\Cell^{-1} + C_1\Cdual\big\}$, we conclude the proof.
\end{proof}

\subsection{Linear convergence and general quasi-orthogonality}
\label{subsec:linearconv}
The following properties \cref{eq:A1,eq:A2} of the estimator and~\cref{eq:B1,eq:B2} of the oscillations
are some key observations to prove linear convergence of~\cref{algorithm:mns}.
The proof is based on scaling arguments and can be found in literature, 
e.g.,~\cite[Section~3.1]{ckns} for \cref{eq:A1,eq:A2}
and~\cite[Section~3.3]{Erath:2016-1} for \cref{eq:B1,eq:B2}.
Their proofs apply almost verbatim to the present non-symmetric problem with $\b\not=0$.
Therefore, the details are left to the reader.
\begin{lemma}
There exist constants $0<q<1$ and $C>0$ such that
for all $\TT_\diamond\in\refine(\TT_0)$, all 
$\TT_\star\in\refine(\TT_\diamond)$, and all $v_\star\in\SS_0^1(\TT_\star)$, $v_\diamond\in\SS_0^1(\TT_\diamond)$, it holds that\\
\textbf{(stability of estimator on non-refined elements)}
\begin{align}
 \label{eq:A1}
 \tag{A1}
\Big|\Big(\sum_{T\in\TT_\star\cap\TT_\diamond}\eta_\star(T,v_\star)^2 \Big)^{1/2}
 - \Big(\sum_{T\in\TT_\star\cap\TT_\diamond}\eta_\diamond(T,v_\diamond)^2 \Big)^{1/2}\Big|
 \leq C\, \norm{v_\star-v_\diamond}{H^1(\Omega)},
\end{align}
\textbf{(reduction of estimator on refined elements)}
\begin{align}
 \label{eq:A2}
 \tag{A2}
 \sum_{T\in\TT_\star\backslash\TT_\diamond}\eta_\star(T,v_\star)^2
 \leq  q\sum_{T\in\TT_\diamond\backslash\TT_\star}\eta_\diamond(T,v_\diamond)^2
     + C\, \norm{v_\star-v_\diamond}{H^1(\Omega)}^2,
\end{align}
\textbf{(stability of oscillations on non-refined elements)}
\begin{align}
 \label{eq:B1}
 \tag{B1}
 \begin{split}
\Big|\Big(\sum_{T\in\TT_\star\cap\TT_\diamond}\osc_\star(T,v_\star)^2 \Big)^{1/2}
 &- \Big(\sum_{T\in\TT_\star\cap\TT_\diamond}\osc_\diamond(T,v_\diamond)^2 \Big)^{1/2}\Big|\\
 &\leq C\, \norm{h_\star}{L^\infty(\Omega)}\norm{v_\star-v_\diamond}{H^1(\Omega)},
 \end{split}
\end{align}
\textbf{(reduction of oscillations on refined elements)}
\begin{align}
 \label{eq:B2}
 \tag{B2}
 \sum_{T\in\TT_\star\backslash\TT_\diamond}\osc_\star(T,v_\star)^2
 \leq  q\sum_{T\in\TT_\diamond\backslash\TT_\star}\osc_\diamond(T,v_\diamond)^2
     + C\, \norm{h_\star}{L^\infty(\Omega)}^2\norm{v_\star-v_\diamond}{H^1(\Omega)}^2.
\end{align}
The constants $0<q<1$ and $C>0$ depend only on the $\sigma$-shape regularity~\cref{eq:sigma} 
and on the data assumptions~\cref{eq:A,eq:bcestimate1}.\qed
\end{lemma}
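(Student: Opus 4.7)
The plan is to verify that the scaling arguments of~\cite[Section~3.1]{ckns} for~\cref{eq:A1,eq:A2} and of~\cite[Section~3.3]{Erath:2016-1} for~\cref{eq:B1,eq:B2} carry over to the non-symmetric setting. The only difference in the definitions is that the volume residual now reads $R_\star(v_\star)=f-\div_\star(-\A\nabla v_\star+\b v_\star)-\c v_\star$, so subtracting two residuals on a common element yields
\begin{align*}
 R_\star(v_\star)-R_\diamond(v_\diamond)
 =\div\bigl(\A\nabla(v_\star-v_\diamond)\bigr)
 -\b\cdot\nabla(v_\star-v_\diamond)
 -\bigl(\div\b+\c\bigr)\,(v_\star-v_\diamond),
\end{align*}
while the jump $J_\star(v_\star)=\jump{\A\nabla v_\star}$ is unchanged. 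Since $v_\star-v_\diamond$ is piecewise affine, the $\A$-term is bounded elementwise by $\norm{\A}{W^{1,\infty}(T)}\,|\nabla(v_\star-v_\diamond)|$ (using $\nabla^2(v_\star-v_\diamond)=0$ together with~\cref{eq3:A}), and the remaining convection/reaction contributions are bounded by $(\norm{\b}{W^{1,\infty}(\Omega)}+\norm{\c}{L^\infty(\Omega)})\,(|v_\star-v_\diamond|+|\nabla(v_\star-v_\diamond)|)$ via~\cref{eq:bcestimate1}. Hence they act as genuinely lower-order perturbations with respect to the scaling exploited in the symmetric proof.

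For~\cref{eq:A1} and~\cref{eq:B1}, fix $T\in\TT_\star\cap\TT_\diamond$. The inverse triangle inequality combined with the above splitting yields
\begin{align*}
 \norm{R_\star(v_\star)-R_\diamond(v_\diamond)}{L^2(T)}\lesssim \norm{v_\star-v_\diamond}{H^1(T)},
\end{align*}
while a discrete trace estimate gives $\norm{J_\star(v_\star)-J_\diamond(v_\diamond)}{L^2(F)}\lesssim h_T^{-1/2}\norm{v_\star-v_\diamond}{H^1(\omega_\star(T))}$ for every facet $F\in\FF_T$, the hidden constants depending only on $\sigma$-shape regularity, $\lambda_{\rm max}$, $\norm{\A}{W^{1,\infty}(T)}$, $\norm{\b}{W^{1,\infty}(\Omega)}$ and $\norm{\c}{L^\infty(\Omega)}$. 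Multiplying by the weight $h_T$ (resp.\ $h_T^{1/2}$), summing over $T\in\TT_\star\cap\TT_\diamond$, and using that patches overlap only a uniformly bounded number of times, one obtains~\cref{eq:A1} after absorbing $h_T\le\diam(\Omega)$ into $C$. For~\cref{eq:B1} one first uses that $1-\Pi_\star$ is an $L^2$-contraction on the common element $T$, so that the same pointwise bound on $R_\star(v_\star)-R_\diamond(v_\diamond)$ applies, and the explicit additional factor $h_T^2$ (resp.\ $h_T$) in the definition of $\osc_\star$ produces the announced weight $\norm{h_\star}{L^\infty(\Omega)}$ on the right-hand side.

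For~\cref{eq:A2} and~\cref{eq:B2}, newest vertex bisection enforces $|T|\le|T_p|/2$ for every child $T\in\TT_\star\setminus\TT_\diamond$ of its parent $T_p\in\TT_\diamond\setminus\TT_\star$, whence $h_T^2\le 2^{-2/d}h_{T_p}^2$. Writing $\eta_\star(T,v_\star)\le\eta_\star(T,v_\diamond)+C\norm{v_\star-v_\diamond}{H^1(T)}$ via the argument of Step~2 and then estimating $\eta_\star(T,v_\diamond)$ in terms of $R_\diamond(v_\diamond)$ and $J_\diamond(v_\diamond)$ on the parent's facets (with contributions from genuinely new facets introduced by bisection controlled by a discrete trace inequality and absorbed into $C\norm{v_\star-v_\diamond}{H^1(\Omega)}^2$), the geometric factor $2^{-2/d}$ survives on the leading term and yields $q:=2^{-2/d}<1$. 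The identical strategy, carried out for $\osc_\star$ using contractivity of $1-\Pi_\star$, gives~\cref{eq:B2} with the additional $\norm{h_\star}{L^\infty(\Omega)}^2$ weight on the perturbation. The main (and only) technical point beyond~\cite{ckns,Erath:2016-1} is checking that the new convection and reaction contributions do not spoil the contraction in~\cref{eq:A2,eq:B2}; this is guaranteed by~\cref{eq:bcestimate1}, since both summands on the right-hand side of the splitting above are bounded by $\norm{v_\star-v_\diamond}{H^1(T)}$ with constants depending only on $\norm{\b}{W^{1,\infty}(\Omega)}$ and $\norm{\c}{L^\infty(\Omega)}$, so they enter the perturbation term $C\norm{v_\star-v_\diamond}{H^1(\Omega)}^2$ exactly as in the symmetric case.
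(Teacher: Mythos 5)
Your overall route (elementwise splitting of $R_\star(v_\star)-R_\diamond(v_\diamond)$, inverse and discrete trace estimates, finite patch overlap, and the NVB size reduction $|T|\le|T_p|/2$ combined with a Young-type absorption) is exactly the scaling argument the paper delegates to the literature, and your treatment of \cref{eq:A1} and, in essence, \cref{eq:A2} is sound. Two small inaccuracies in \cref{eq:A2}: the jump indicator carries the weight $h_T$, not $h_T^2$, so the geometric factor on the leading term is $2^{-1/d}$, and after the Young inequality one only gets some $q=(1+\delta)2^{-1/d}<1$, not $q=2^{-2/d}$ exactly; this is harmless since the lemma only asserts existence of $q<1$. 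Also, the jumps $J_\star(v_\diamond)$ on newly created facets cannot be ``absorbed into $C\,\norm{v_\star-v_\diamond}{H^1(\Omega)}^2$'' (they do not involve $v_\star-v_\diamond$ at all); instead they vanish identically, because $v_\diamond$ is affine across such a facet and $\A$ is continuous there, the facet lying in the interior of an element of $\TT_0$, cf.~\cref{eq3:A}.

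The genuine gap is in \cref{eq:B1,eq:B2}. There is no ``additional factor $h_T^2$ (resp.\ $h_T$)'' in the definition of $\osc_\star$ relative to $\eta_\star$ --- the weights in \cref{eq:etaToscT} are identical --- and the $L^2$-contraction of $1-\Pi_{\star}$ alone gives for the jump part only
\begin{align*}
 h_T^{1/2}\norm{(1-\Pi_{\star})\big(J_\star(v_\star)-J_\diamond(v_\diamond)\big)}{L^2(\partial T\backslash\Gamma)}
 \le h_T^{1/2}\norm{\jump{\A\nabla e}}{L^2(\partial T\backslash\Gamma)}
 \lesssim \norm{\nabla e}{L^2(\omega_\star(T))},
 \qquad e:=v_\star-v_\diamond,
\end{align*}
i.e.\ precisely the unweighted bound of \cref{eq:A1}, which does \emph{not} yield the factor $\norm{h_\star}{L^\infty(\Omega)}$ claimed in \cref{eq:B1}. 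The missing idea is that $e$ is piecewise affine, so for any constant matrix $\A_0$ (e.g.\ the mean of $\A$ over the facet patch) the jump $\jump{\A_0\nabla e}$ is facetwise constant and hence annihilated by $1-\Pi_{\star}$; consequently $(1-\Pi_{\star})\jump{\A\nabla e}=(1-\Pi_{\star})\jump{(\A-\A_0)\nabla e}$, and $\norm{\A-\A_0}{L^\infty}\lesssim h_T\norm{\A}{W^{1,\infty}}$ by the piecewise Lipschitz assumption~\cref{eq3:A} produces the extra power of $h_T$. This is exactly where \cref{eq3:A} enters the oscillation estimates, and the same device is needed for the refined-element bound \cref{eq:B2}; without it, your argument only reproduces \cref{eq:A1,eq:A2} for the oscillations and the stated $\norm{h_\star}{L^\infty(\Omega)}$-weights do not follow. (The volume part of \cref{eq:B1,eq:B2} is indeed covered by your contraction argument, since the residual difference already carries the weight $h_T$.)
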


\begin{theorem}[\textbf{linear convergence}]
 \label{theorem:mns:linear}
Let $0<\theta'\le\theta\le1$.
There exists $H>0$ such that the following statement is valid provided that $\norm{h_0}{L^\infty(\Omega)}\le H$ and that the dual problem~\cref{eq:weakformdual} is $H^{1+s}$-regular~\cref{eq:Hs_regular} for some $0< s\leq 1$: There exist $\Clin>0$ and $0<\qlin<1$ such that
\cref{algorithm:mns} guarantees linear convergence in the sense of 
\begin{align}\label{eq:mns:linear}
 \eta_{\ell+n}^2\le\Clin\qlin^n\,\eta_\ell^2
 \quad\text{for all }\ell,n\in\N_0.
\end{align}
The constant $H$ depends only on the $\sigma$-shape regularity~\cref{eq:sigma}, 
on the data assumptions~\cref{eq:A,eq:bcestimate1}, $\Cgal$, $\theta$, and $\theta'$, whereas
$\Clin$ and $\qlin$ additionally depend on $\CAbil$ and $\Crel$.
\end{theorem}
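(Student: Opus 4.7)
The plan is to follow the now-standard two-step strategy: first prove contraction of a weighted quasi-error
\begin{align*}
\Delta_\ell := \enorm{u-u_\ell}^2 + \gamma\,\eta_\ell^2 + \mu\,\osc_\ell^2
\end{align*}
with weights $\gamma,\mu>0$ to be fixed, and then transfer the contraction to $\eta_\ell$ alone via reliability~\cref{eq:reliableefficient}. The four small parameters $\gamma$, $\mu$, a splitting parameter $\delta$, and the mesh-size bound $H$ are chosen simultaneously at the end.

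First, I would recast the estimator axioms \cref{eq:A1,eq:A2} together with the first D\"orfler criterion~\cref{eq:doerfler:eta} into a perturbed estimator reduction
\begin{align*}
\eta_{\ell+1}^2 \le q_\eta\,\eta_\ell^2 + C_1\,\norm{u_{\ell+1}-u_\ell}{H^1(\Omega)}^2,
\qquad q_\eta=q_\eta(\theta)\in(0,1),
\end{align*}
by the classical calculation (split the sum over $\TT_{\ell+1}\cap\TT_\ell$ and $\TT_{\ell+1}\setminus\TT_\ell$, apply \cref{eq:A1,eq:A2} with a Young inequality, and bound the refined part by Dörfler). An identical argument for \cref{eq:B1,eq:B2} with the second D\"orfler criterion~\cref{eq:doerfler:osc} yields
\begin{align*}
\osc_{\ell+1}^2 \le q_{\rm osc}\,\osc_\ell^2 + C_2\,\norm{h_{\ell+1}}{L^\infty(\Omega)}^2\,\norm{u_{\ell+1}-u_\ell}{H^1(\Omega)}^2,
\qquad q_{\rm osc}=q_{\rm osc}(\theta')\in(0,1).
\end{align*}

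Next, I would invoke the quasi-Galerkin orthogonality \cref{lemma:orth} (with $\TT_\diamond=\TT_\ell$, $\TT_\star=\TT_{\ell+1}$) and form the linear combination \emph{orthogonality} $+\,\gamma\cdot$(estimator reduction)$\,+\,\mu\cdot$(oscillation reduction). Substituting the estimator and oscillation reductions once more into the perturbative $\Cgal\norm{h_{\ell+1}}{L^\infty(\Omega)}^{2s}\eta_{\ell+1}^2$ and $\Cgal\osc_{\ell+1}^2$ terms, and converting $\norm{u_{\ell+1}-u_\ell}{H^1(\Omega)}^2$ into $\enorm{u_{\ell+1}-u_\ell}^2$ via~\cref{eq:enorm:equivalent}, one arrives at
\begin{align*}
\Delta_{\ell+1} \le \enorm{u-u_\ell}^2 + \widetilde q_\eta\,\gamma\,\eta_\ell^2 + \widetilde q_{\rm osc}\,\mu\,\osc_\ell^2 + B\,\enorm{u_{\ell+1}-u_\ell}^2,
\end{align*}
where $\widetilde q_\eta,\widetilde q_{\rm osc}\in(0,1)$ (close to $q_\eta$ and $q_{\rm osc}$ for $H$ small and $\mu$ large enough) and $B$ collects $-\tfrac12$ from orthogonality together with $\gamma$-, $\mu$-, and $\Cgal\norm{h_{\ell+1}}{L^\infty(\Omega)}^{2s}$-weighted positive contributions. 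Choosing $\gamma,\mu$ small and then $H$ small forces $B\le 0$, so this last term can be dropped. It remains to contract the coefficient of $\enorm{u-u_\ell}^2$: split $\enorm{u-u_\ell}^2=(1-\delta)\enorm{u-u_\ell}^2+\delta\enorm{u-u_\ell}^2$ and bound the second part by $\delta\CAbil\Crel\,\eta_\ell^2$ via reliability~\cref{eq:reliableefficient} together with~\cref{eq:enorm:equivalent}. For $\delta>0$ sufficiently small, the result is a strict contraction $\Delta_{\ell+1}\le\qlin\,\Delta_\ell$ with $0<\qlin<1$. Iterating and using the sandwich $\gamma\,\eta_\ell^2\le\Delta_\ell\le(\CAbil\Crel+\gamma+\mu)\,\eta_\ell^2$ (the upper bound uses reliability together with $\osc_\ell\le\eta_\ell$) yields~\cref{eq:mns:linear} with $\Clin=(\CAbil\Crel+\gamma+\mu)/\gamma$.

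The main obstacle is the simultaneous tuning of $\gamma$, $\mu$, $\delta$, and $H$: the smallness of $H$ is precisely what couples the $H^{1+s}$-regularity of the dual problem to the absorption argument, since it controls both the factor $\Cgal\norm{h_{\ell+1}}{L^\infty(\Omega)}^{2s}$ in front of $\eta_{\ell+1}^2$ coming from \cref{lemma:orth} and the sign of $B$ in front of $\enorm{u_{\ell+1}-u_\ell}^2$. Because NVB is non-expanding, one has $\norm{h_\ell}{L^\infty(\Omega)}\le\norm{h_0}{L^\infty(\Omega)}\le H$ for every $\ell$, so a single choice of $H$ controls the entire sequence and the same contraction factor works uniformly in $\ell$. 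Everything else in the argument is bookkeeping.
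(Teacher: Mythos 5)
Your plan reproduces the paper's proof essentially verbatim: the same quasi-error $\Delta_\ell=\enorm{u-u_\ell}^2+\gamma\,\eta_\ell^2+\mu\,\osc_\ell^2$, the same perturbed estimator/oscillation reductions obtained from \cref{eq:A1,eq:A2} and \cref{eq:B1,eq:B2} with the two D\"orfler criteria, the quasi-Galerkin orthogonality of \cref{lemma:orth} with $\TT_\diamond=\TT_\ell$, $\TT_\star=\TT_{\ell+1}$, absorption of the $\enorm{u_{\ell+1}-u_\ell}^2$ term for $H$ small, the reliability splitting with a small parameter, and the sandwich $\gamma\,\eta_\ell^2\le\Delta_\ell\le(\CAbil\Crel+\gamma+\mu)\,\eta_\ell^2$ giving $\Clin=(\CAbil\Crel+\gamma+\mu)/\gamma$. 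One slip in the parameter tuning: you write ``choosing $\gamma,\mu$ small'' to force $B\le 0$, but $\mu$ must be chosen \emph{large} (as your own parenthetical correctly notes), because the unweighted term $\Cgal\,\osc_{\ell+1}^2$ in \cref{eq:galerkin} requires $(1+\mu^{-1}\Cgal)\,q<1$ for the oscillation part to contract; the resulting contribution $C\,H^2[\mu+\Cgal]$ inside $B$ is then rendered harmless by choosing $H$ \emph{after} $\mu$, exactly as in the paper (order: $\gamma$ small, $\mu$ large, $H$ small, then your $\delta$ small).
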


\def\qref{q_{\rm ref}}
\def\qosc{q_{\rm osc}}
\begin{proof}
The proof is split into three steps.

{\bf Step 1.}
There exist constants $C>0$ and $0<q<1$ which depend only on $0<\theta\le1$, $\Cell$, and the constants in~\cref{eq:A1,eq:A2}, such that
\begin{align}\label{eq:mns:step1}
 \eta_{\ell+1}^2 \le q\,\eta_\ell^2 + C \, \enorm{u_{\ell+1}-u_\ell}^2
 \quad\text{for all }\ell\in\N_0.
\end{align}
Furthermore, there exist constants $C>0$ and $0<q<1$ 
which depend only on $0<\theta'\le 1$, $\Cell$, and the constants in~\cref{eq:B1,eq:B2}, such that
\begin{align}\label{eq:mns:step2}
 \osc_{\ell+1}^2 \le q\,\osc_\ell^2 + C\,\norm{h_{\ell+1}}{L^\infty(\Omega)}^2 \,
 \enorm{u_{\ell+1}-u_\ell}^2
 \quad\text{for all }\ell\in\N_0:
\end{align}
The proofs of~\cref{eq:mns:step1} and~\cref{eq:mns:step2} 
rely only on~\cref{eq:A1,eq:A2} with the D\"orfler marking~\cref{eq:doerfler:eta} and~\cref{eq:B1,eq:B2}  
with marking~\cref{eq:doerfler:osc}, respectively.
For details, we refer, e.g., to~\cite[Proposition 10 (step~1 and step~2)]{Erath:2016-1}.

{\bf Step 2.}
Without loss of generality, we may assume that the 
constants $C>0$ and $0<q<1$ in~\cref{eq:mns:step1}--\cref{eq:mns:step2} are the same.
With free parameters $\gamma,\mu>0$, 
we define 
\begin{align*}
 \Delta_\star := \enorm{u-u_{\star}}^2 + \gamma\,\eta_\star^2 + \mu\,\osc_\star^2.
\end{align*}
We employ the quasi-Galerkin orthogonality~\cref{eq:galerkin} and obtain that
\begin{align*}
\Delta_{\ell+1}
 \le \enorm{u-u_\ell}^2
 + [\gamma+\Cgal\,\norm{h_{\ell+1}}{L^\infty(\Omega)}^{2s}\big]\eta_{\ell+1}^2 
 + [\mu + \Cgal]\,\osc_{\ell+1}^2
 - \frac12\,\enorm{u_{\ell+1}-u_\ell}^2.
\end{align*}
Using~\cref{eq:mns:step1}--\cref{eq:mns:step2}, we further derive that
\begin{align*}
 \Delta_{\ell+1}
 &\le \enorm{u-u_\ell}^2
 + \big[\gamma+\Cgal\,\norm{h_{\ell+1}}{L^\infty(\Omega)}^{2s}\big]\,q\,\eta_{\ell}^2
 + \big[\mu + \Cgal\big]\,q\,\osc_{\ell}^2\\
 &\qquad-\bigg(\frac12-C\,\big[\gamma+\Cgal\,\norm{h_{\ell+1}}{L^\infty(\Omega)}^{2s}\big]
 - C\,\norm{h_{\ell+1}}{L^\infty(\Omega)}^2\,\big[\mu + \Cgal\big]\bigg)\,\enorm{u_{\ell+1}-u_\ell}^2.
\end{align*}
Let $H>0$ be a free parameter and suppose that $\norm{h_0}{L^\infty(\Omega)} \le H$.
We estimate $\norm{h_{\ell+1}}{L^\infty(\Omega)} \le \norm{h_0}{L^\infty(\Omega)} \le H$. Norm
equivalence~\cref{eq:enorm:equivalent} and reliability~\cref{eq:reliableefficient} prove that
\begin{align*}
 \enorm{u-u_\ell}^2 \le \CAbil\, \norm{u-u_\ell}{H^1(\Omega)}^2 \le \CAbil\Crel\,\eta_\ell^2.
\end{align*}
Let $\eps>0$ be a free parameter. Combining the last two estimates, we see that
\begin{align*}
\Delta_{\ell+1}
  &\le (1-\eps)\,\enorm{u-u_\ell}^2
 + \gamma\,\big[(1+\gamma^{-1}\Cgal\,H^{2s})q+\gamma^{-1}\eps\,\CAbil\Crel\big]\,\eta_{\ell}^2
 + \mu\,\big[1 + \mu^{-1}\Cgal\big]\,q\,\osc_{\ell}^2\\
 &\qquad-\bigg(\frac12-C\,\big[\gamma+\Cgal\,H^{2s}\big]
 - C\,H^2\big[\mu + \Cgal\big]\bigg)\,\enorm{u_{\ell+1}-u_\ell}^2.
\end{align*}

{\bf Step 3.} It only remains to fix the four free parameters $\gamma$, $\mu$, $\eps$, and $H$.
\begin{itemize}
\item Choose $\gamma>0$ sufficiently small such that $\gamma C < 1/2$.
\item Choose $\mu>0$ sufficiently large such that $\qosc := \big[1 + \mu^{-1}\Cgal\big]\,q < 1$.
\item Choose $H$ sufficiently small such that 
\begin{itemize}
\item[$\bullet$] $C\,\big[\gamma+\Cgal\,H^{2s}\big] + C\,H^2\big[\mu + \Cgal\big] < 1/2$,
\item[$\bullet$] $(1+\gamma^{-1}\Cgal\,H^{2s}) q < 1$.
\end{itemize}
\item Choose $0<\eps< 1$ such that
$\qest := \big[(1+\gamma^{-1}\Cgal\,H^{2s})q+\gamma^{-1}\eps\,\CAbil\Crel\big] < 1$.
\end{itemize}
With $\qlin:=\max\{\,1-\eps\,,\,\qest\,,\,\qosc\,\}$, we then obtain that
\begin{align*}
 \Delta_{\ell+1}
 &\le (1-\eps)\,\enorm{u-u_\ell}^2
 + \gamma\,\big[(1+\gamma^{-1}\Cgal\,H^{2s})q+\gamma^{-1}\eps\,\CAbil\Crel\big]\,\eta_{\ell}^2
 + \mu\,\big[1 + \mu^{-1}\Cgal\big]\,q\,\osc_{\ell}^2
 \\&
 \le \max\{\,1-\eps\,,\,\qest\,,\,\qosc\,\}\,\Delta_\ell=\qlin\Delta_\ell.
\end{align*}
Induction on $n$, norm
equivalence~\cref{eq:enorm:equivalent},
reliability~\cref{eq:reliableefficient}, and $\osc_{\ell}^2\leq\eta_\ell^2$ prove that
\begin{align*}
 \gamma \,\eta_{\ell+n}^2\leq\Delta_{\ell+n}\leq\qlin^n\Delta_\ell
 \leq \qlin^n(\Crel\CAbil+\gamma+\mu)\,\eta_\ell^2\quad \text{for all }\ell,n\in\N_0
\end{align*}
This concludes linear convergence~\cref{eq:mns:linear} 
with $\Clin=(\Crel\CAbil+\gamma+\mu)\gamma^{-1}$.
\end{proof} 

From the linear convergence~\cref{eq:mns:linear}, we immediately obtain the so-called
\emph{general quasi-orthogonality}; see, e.g.,~\cite[Proposition~4.11]{axioms} or \cite[Proposition 10 (step~5)]{Erath:2016-1}. 

\begin{corollary}[\textbf{general quasi-orthogonality}]
 Let $(u_k)$ be the sequence of solutions of \cref{algorithm:mns}.
 Then there exists $C>0$ such that
\begin{align}
 \label{eq:A3}
 \tag{A3}
 \sum_{k=\ell}^\infty \norm{u_{k+1}-u_k}{H^1(\Omega)}^2
 \le C\,\eta_\ell^2
 \quad \text{ for all $\ell\in\N_0$.}
\end{align}
The constant $C>0$ has the same dependencies as $\Clin$ from~\cref{eq:mns:linear}.
\end{corollary}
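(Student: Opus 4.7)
The plan is to deduce the summability bound directly from the linear convergence already established in \cref{theorem:mns:linear} together with reliability from \cref{prop:reliability-efficiency}. The key observation is that consecutive discrete solutions can be controlled by the error to the exact solution, and the latter is uniformly dominated by the estimator.

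First, I would apply the triangle inequality to split the increment into errors against the true solution:
\begin{align*}
 \norm{u_{k+1}-u_k}{H^1(\Omega)}^2
 \le 2\,\norm{u-u_{k+1}}{H^1(\Omega)}^2 + 2\,\norm{u-u_k}{H^1(\Omega)}^2.
\end{align*}
Reliability~\cref{eq:reliableefficient} then bounds each of these error terms by the corresponding estimator, giving
\begin{align*}
 \norm{u_{k+1}-u_k}{H^1(\Omega)}^2
 \le 2\,\Crel\,(\eta_{k+1}^2 + \eta_k^2)
 \le 4\,\Crel\,\eta_k^2,
\end{align*}
where in the last step I use the fact that linear convergence in particular yields $\eta_{k+1}^2 \le \Clin\qlin\,\eta_k^2$, and without loss of generality $\Clin\qlin \le 1$ (or simply estimate $\eta_{k+1}^2 \le \eta_k^2 + \eta_{k+1}^2$ and re-index).

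Next, I would sum the resulting inequality from $k=\ell$ to infinity and apply the linear convergence estimate~\cref{eq:mns:linear} with base index $\ell$, namely $\eta_k^2 \le \Clin\,\qlin^{k-\ell}\,\eta_\ell^2$ for all $k\ge\ell$. The geometric series then gives
\begin{align*}
 \sum_{k=\ell}^\infty \norm{u_{k+1}-u_k}{H^1(\Omega)}^2
 \le 4\,\Crel\,\sum_{k=\ell}^\infty \eta_k^2
 \le \frac{4\,\Crel\,\Clin}{1-\qlin}\,\eta_\ell^2,
\end{align*}
which is precisely~\cref{eq:A3} with $C = 4\,\Crel\,\Clin/(1-\qlin)$. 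Since $\Clin$ and $\qlin$ depend on the quantities listed in \cref{theorem:mns:linear} and $\Crel$ on the data and shape regularity, the asserted dependency of the constant $C$ is immediate.

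No step is really an obstacle here — this is the standard geometric-series argument, since the heavy lifting (quasi-Galerkin orthogonality and the derivation of linear convergence from it) has already been done in \cref{lemma:orth} and \cref{theorem:mns:linear}. The only mild care needed is to make sure that linear convergence is applied with the correct base index, which is exactly what~\cref{eq:mns:linear} provides for every $\ell\in\N_0$.
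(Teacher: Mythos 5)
Your argument is correct and is essentially the proof the paper has in mind (it only cites the references for this "immediate" consequence): the triangle inequality plus reliability~\cref{eq:reliableefficient} bound each increment by $\eta_{k+1}^2+\eta_k^2$, and the geometric series obtained from linear convergence~\cref{eq:mns:linear} with base index $\ell$ sums this to $C\,\eta_\ell^2$ with $C\simeq\Crel\Clin/(1-\qlin)$. The only blemish is the aside that ``without loss of generality $\Clin\qlin\le1$'', which cannot be assumed, but your parenthetical alternative---summing $\eta_{k+1}^2+\eta_k^2$ directly over $k\ge\ell$ and re-indexing so that each $\eta_k^2$ appears at most twice---already repairs this, so there is no gap.
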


\subsection{Optimal algebraic convergence rates}
In order to prove optimal convergence rates of \cref{algorithm:mns}, we need one further property
of the error estimator, namely the so-called \emph{discrete reliability} (A4). The proof of the following lemma follows as for the symmetric case in~\cite[Proposition 15]{Erath:2016-1}. While the proof is thus omitted, we note that the main difficulties over the well-known FEM proof~\cite{ckns} arise in the handling 
of the piecewise constant test space on $\TT_\star^*$ and $\TT_\diamond^*$, respectively,
and the fact that these test spaces are not nested.

\begin{lemma}[\textbf{discrete reliability}]
There exists a constant $C>0$ such that for
all $\TT_\diamond\in\refine(\TT_0)$ and all 
$\TT_\star\in\refine(\TT_\diamond)$, it holds that
\begin{align}
\label{eq:A4}
\tag{A4}
 \norm{u_\star - u_\diamond}{H^1(\Omega)}^2
 \le C\Big(\sum_{T\in\TT_{\star}}h_T^2\norm{u_\star - u_\diamond}{H^1(T)}^2+
 \sum_{T\in\RR_\diamond}\eta_{\diamond}(T,u_\diamond)^2\Big),
\end{align}
where
$\RR_\diamond:=\set{T\in\TT_\diamond}{\exists T'\in\TT_\diamond\backslash\TT_{\star}
\text{ with }T\cap T'\neq\emptyset}$
consists of all refined elements $\TT_\diamond\backslash\TT_{\star}$ plus one
additional layer of neighboring elements.
The constant $C>0$ depends only on the $\sigma$-shape regularity~\cref{eq:sigma}, 
the data assumptions~\cref{eq:A,eq:bcestimate1}, and $\Omega$.
Note that for a sufficiently fine initial mesh $\TT_0$, e.g., $C\,\norm{h_0}{L^\infty(\Omega)}^2 \leq 1/2$,~\cref{eq:A4} 
leads to discrete reliability as stated in~\cite{axioms}.\hfill\qed
\end{lemma}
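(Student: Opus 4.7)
The approach follows the skeleton of \cite{Erath:2016-1}, Proposition~15: the non-symmetric lower-order terms $\b,\c$ enter only through the definition of $\AA_\star$ and the stability constant $\Cstab$, while the two identities of \cref{lem:discdefect} — which form the analytic backbone of the argument — are unaffected. Write $e := u_\star - u_\diamond \in \SS^1_0(\TT_\star)$, using $\SS^1_0(\TT_\diamond) \subseteq \SS^1_0(\TT_\star)$. The starting point is the $\AA_\star$-stability from \cref{lem:bilinearcomp}:
\begin{align*}
\Cstab\,\norm{e}{H^1(\Omega)}^2 \le \AA_\star(e,\II_\star^* e).
\end{align*}
The discrete defect identity~\cref{eq2:lem:discdefect} applied with test $v^*_\star = \II_\star^* e \in \PP^0_0(\TT_\star^*)$ rewrites the right-hand side in terms of residuals and normal jumps of $u_\diamond$; subtracting the $L^2$-orthogonality~\cref{eq1:lem:discdefect} with $v^*_\diamond := \II_\diamond^* e \in \PP^0_0(\TT_\diamond^*)$ replaces the test function $\II_\star^* e$ by $w := \II_\star^* e - \II_\diamond^* e$.

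The crucial observation is locality. For every node $a \in \NN_\diamond$ whose entire primal star $\omega_\diamond(a)$ consists of unrefined elements, the dual cells $V_a^\star$ and $V_a^\diamond$ coincide, so $w$ vanishes on $V_a^\diamond$. Consequently, the sums reduce to contributions from (a neighborhood of) the refined layer $\RR_\diamond$. On each affected $T$, the scaling bounds~\cref{eq2:proplindual}--\cref{eq3:proplindual} applied to both $\II_\star^*$ and its $\TT_\diamond$-analog yield
$\norm{w}{L^2(T)} \lesssim h_T\,\norm{\nabla e}{L^2(\omega_\diamond(T))}$, with an analogous bound on the facets of $T$. Cauchy--Schwarz together with the definition of the estimator~\cref{eq:etaToscT} and Young's inequality then produce
\begin{align*}
\AA_\star(e,\II_\star^* e) \le C\sum_{T\in\RR_\diamond}\eta_\diamond(T,u_\diamond)^2 + \tfrac{\Cstab}{2}\norm{e}{H^1(\Omega)}^2,
\end{align*}
and absorbing the second summand into the left-hand side of the initial stability bound delivers the estimator term of~\cref{eq:A4}. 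The additional $\sum_{T\in\TT_\star}h_T^2\norm{e}{H^1(T)}^2$ term arises from the comparison~\cref{eq:bilinearcomp} between $\AA$ and $\AA_\star$ which must be invoked when passing between the FVM and FEM bilinear forms; its mesh-size weighting is precisely what renders it absorbable into the left-hand side under the smallness condition $C\,\norm{h_0}{L^\infty(\Omega)}^2\le 1/2$ indicated in the statement.

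The principal obstacle is the non-nestedness of the piecewise constant test spaces, $\PP^0_0(\TT_\diamond^*) \not\subseteq \PP^0_0(\TT_\star^*)$, which precludes a direct Galerkin-orthogonality argument as in the FEM setting of~\cite{ckns}. This is resolved precisely by the paired use of the two identities in \cref{lem:discdefect}, combined with the pointwise coincidence of $\II_\star^*$ and $\II_\diamond^*$ on dual cells whose defining primal star has not been touched by the refinement. The geometric bookkeeping required to verify that $w$ is supported in a narrow neighborhood of $\RR_\diamond$, and the corresponding control of $\norm{w}{L^2}$ by $h_T\norm{\nabla e}{L^2(\omega_\diamond(T))}$, constitute the most delicate part of the argument but introduce no essentially new ingredient beyond the symmetric case of \cite{Erath:2016-1}.
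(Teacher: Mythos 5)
Your core mechanism is indeed the engine of the proof that the paper defers to, namely \cite[Proposition~15]{Erath:2016-1}: test the defect identity~\cref{eq2:lem:discdefect} with $\II_\star^* e$, subtract the $L^2$-orthogonality~\cref{eq1:lem:discdefect} with $\II_\diamond^* e$, observe that $w=\II_\star^* e-\II_\diamond^* e$ vanishes on every dual cell whose primal patch contains no refined element (so that, by the definition of $\RR_\diamond$, only elements of $\RR_\diamond$ contribute), and conclude by scaling, Cauchy--Schwarz, and Young. That part is correct and is the genuinely FVM-specific content.

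However, your starting point and your account of the mesh-weighted term do not fit together, and as written you do not obtain~\cref{eq:A4} with the stated constant. Opening with the stability estimate~\cref{eq:stable} presupposes the fineness condition $\Cell-\Cbil\norm{h_\star}{L^\infty(\Omega)}>0$, and your final constant then inherits a dependence on $\Cstab$, i.e., on the mesh-size threshold --- whereas the lemma asserts that $C$ depends only on shape regularity, the data, and $\Omega$, and the entire purpose of carrying the term $\sum_{T\in\TT_\star}h_T^2\norm{u_\star-u_\diamond}{H^1(T)}^2$ is that the inequality itself requires no fineness assumption (fineness enters only in the concluding absorption remark). Moreover, within your own skeleton that term never actually arises: after~\cref{eq:stable} you work exclusively with $\AA_\star$ and the residual identities, so there is no ``passage between the FVM and FEM bilinear forms'' from which you claim it comes. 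The correct bookkeeping is to start from the unconditional ellipticity $\Cell\,\norm{e}{H^1(\Omega)}^2\le\AA(e,e)$, pay the comparison~\cref{eq:bilinearcomp} to replace $\AA(e,e)$ by $\AA_\star(e,\II_\star^* e)$ at the cost of $\Cbil\sum_{T}h_T\norm{e}{H^1(T)}^2$, and then use Young's inequality $h_T\norm{e}{H^1(T)}^2\le\frac{\delta}{2}\norm{e}{H^1(T)}^2+\frac{1}{2\delta}h_T^2\norm{e}{H^1(T)}^2$ with $\delta$ small to absorb the unweighted part into the left-hand side and retain the $h_T^2$-weighted remainder; only after this do your steps deliver~\cref{eq:A4} as stated. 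A smaller point: \cref{eq2:proplindual}--\cref{eq3:proplindual} are formulated for functions that are piecewise affine with respect to the \emph{same} mesh, so for the coarse interpolant $\II_\diamond^*$ applied to $e\in\SS^1_0(\TT_\star)$ you need a separate Poincar\'e/scaling estimate on coarse dual cells (available because NVB admits only finitely many patch shapes); this should be stated rather than subsumed under \cref{lem:proplindual}.
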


Let $\T := \refine(\TT_0)$ be the set of all possible triangulations obtained by NVB. 
For $N\ge0$, let $\T_N := \set{\TT_\star\in\T}{\#\TT_\star-\#\TT_0 \le N}$. For $s>0$, define
\begin{align}
 \norm{u}{\mathbb{A}_s} := \sup_{N\in\N_0} \inf_{\TT_\star\in\T_N} (N+1)^s\,\eta_\star.
\end{align}
Note that $\norm{u}{\mathbb{A}_s} < \infty$ implies an algebraic decay $\eta_\star = \OO\big((\#\TT_\star)^{-s}\big)$ along the optimal sequence of meshes (which minimize the error estimator). Optimal convergence of the adaptive algorithm thus means that for all $s>0$ with $\norm{u}{\mathbb{A}_s} < \infty$, the adaptive algorithm leads to $\eta_\ell = \OO\big((\#\TT_\ell)^{-s}\big)$.
The work~\cite[Theorem~4.1]{axioms} proves in a general framework the following \cref{theorem:mns}, 
if the adaptive algorithm applied to a numerical scheme and a corresponding 
estimator satisfies~\cref{eq:A1},~\cref{eq:A2},~\cref{eq:A3}, and~\cref{eq:A4}.

\begin{theorem}[\textbf{optimal algebraic convergence rates}]
 \label{theorem:mns}
Suppose that the dual problem~\cref{eq:weakformdual} 
is $H^{1+s}$-regular~\cref{eq:Hs_regular} for some $0< s\leq 1$. Let
the initial mesh $\TT_0$ be sufficiently fine, i.e, there exists a constant $H>0$
such that $\norm{h_0}{L^\infty(\Omega)}\le H$.
Finally, suppose that there is a constant $\Cmns\ge 1$ such that $\#\MM_\ell\le\Cmns\#\MM_\ell^\eta$
for all $\ell\in\N_0$.
Then there  exists a bound $0<\theta_{\rm opt}\le 1$ such that for all $0<\theta<\theta_{\rm opt}$ and all $s>0$ with $\norm{u}{\mathbb{A}_s}<\infty$, there exists a constant $\Copt>0$
such that
\begin{align}\label{eq:mns:optimal}
\eta_\ell\leq \Copt(\#\TT_\ell-\#\TT_0)^{-s}
\quad \text{for all } \ell \in \N.
\end{align}
The constant $\theta_{\rm opt}$ depends only on $\Omega$, $H$, uniform $\sigma$-shape regularity of the triangulations 
$\TT_\star\in\refine(\TT_0)$, and the data assumptions~\cref{eq:A,eq:bcestimate1}. The constant
$\Copt$ additionally depends on $s$, the constant $\qlin$ from~\cref{eq:mns:linear}, the use of NVB, and on $\Cmns$.\hfill \qed
\end{theorem}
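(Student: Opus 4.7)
The plan is to reduce the statement to the abstract framework of~\cite[Theorem~4.1]{axioms}, which asserts exactly this optimal rate conclusion once a numerical scheme and its estimator are shown to satisfy the four axioms \cref{eq:A1}, \cref{eq:A2}, \cref{eq:A3}, \cref{eq:A4}. The key point is therefore to assemble the ingredients that have already been established in the preceding sections and to check that they enter the abstract framework with the correct quantitative dependencies.

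First, I would record that \cref{eq:A1} and \cref{eq:A2} are provided by the stability/reduction estimates for the residual estimator, and that \cref{eq:A4} is the discrete reliability lemma proved just above the theorem. The missing ingredient among the axioms is the general quasi-orthogonality \cref{eq:A3}, which is the corollary drawn from the linear convergence result \cref{theorem:mns:linear}: summing the geometric decay $\eta_{\ell+n}^2 \le \Clin\qlin^n\eta_\ell^2$ together with the telescoping estimate $\enorm{u_{k+1}-u_k}^2 \lesssim \Delta_k - \Delta_{k+1}/\qlin^{-1}$ implicit in the proof of linear convergence, combined with norm equivalence \cref{eq:enorm:equivalent} and reliability \cref{eq:reliableefficient}, bounds $\sum_{k\ge\ell}\norm{u_{k+1}-u_k}{H^1(\Omega)}^2$ by a multiple of $\eta_\ell^2$.

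Second, I would verify the hypotheses of \cite[Theorem~4.1]{axioms} for the present discretization. The sufficient-fineness condition on $\TT_0$ enters in two places: on the one hand, it guarantees well-posedness and the stability estimate~\cref{eq:stable} of the FVM scheme via \cref{lem:bilinearcomp}; on the other hand, it is needed in the discrete reliability lemma in order to absorb the weighted term $\sum_T h_T^2\norm{u_\star-u_\diamond}{H^1(T)}^2$ into $\norm{u_\star-u_\diamond}{H^1(\Omega)}^2$ and so produce the clean form of (A4) used in~\cite{axioms}. The hypothesis $\#\MM_\ell\le\Cmns\#\MM_\ell^\eta$ ensures that the extra oscillation marking in step~(iv) of \cref{algorithm:mns} does not inflate the cardinality of the marked set beyond a constant factor of the quasi-minimal set produced by D\"orfler marking of the estimator, which is precisely what the abstract analysis requires.

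Third, with all axioms in place, \cite[Theorem~4.1]{axioms} supplies the threshold $\theta_{\rm opt}$, depending only on the constants in \cref{eq:A1}--\cref{eq:A4} (hence only on the data, $\Omega$, $\sigma$-shape regularity, and $H$), and yields the optimality bound \cref{eq:mns:optimal} with $\Copt$ depending additionally on $s$, $\qlin$, $\Cmns$, and the NVB closure constant. Since the remaining arguments are verbatim those of the abstract framework, I expect no genuine obstacle: the only substantive work has already been done in establishing \cref{eq:A3} (the hard part, via the quasi-Galerkin orthogonality in \cref{lemma:orth} and the duality estimate in \cref{lemma:dual}) and \cref{eq:A4}. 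The remainder of the proof is a bookkeeping application of~\cite{axioms}.
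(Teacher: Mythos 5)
Your proposal follows exactly the route the paper takes: the theorem is obtained by verifying the axioms \cref{eq:A1}, \cref{eq:A2}, \cref{eq:A3} (via \cref{theorem:mns:linear} and its corollary), and \cref{eq:A4}, and then invoking the abstract result of~\cite[Theorem~4.1]{axioms}, which is why the paper states the theorem with the proof omitted. Your bookkeeping of where the fine-mesh assumption and the marking condition $\#\MM_\ell\le\Cmns\#\MM_\ell^\eta$ enter is consistent with the paper, so there is nothing substantive to add.
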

\begin{remark}
 \label{rem:marking}
 A direct consequence of the assumption $\#\MM_\ell\le\Cmns\#\MM_\ell^\eta$
 in \cref{theorem:mns} is
 that data oscillation marking~\eqref{eq:doerfler:osc} 
 is negligible with respect to the overall number of marked elements~\cite[Remark 7]{Erath:2016-1}. 
 In practice,~\cref{eq:doerfler:eta} already implies~\cref{eq:doerfler:osc}
 since $\theta'>0$ can be chosen arbitrarily small. 
 Furthermore, efficiency~\cref{eq:reliableefficient}
 is not required to show~\cref{eq:mns:linear} and \cref{eq:mns:optimal} 
 but guarantees (optimal) linear convergence 
 also for the FVM error.
\end{remark}
%
%

\section{Numerical examples}
\noindent
In extension of our theory, we consider
the model problem~\eqref{eq:model} with inhomogeneous Dirichlet boundary conditions. 
For all experiments in 2D, we run \cref{algorithm:mns} 
with $\theta=1=\theta'$ and $\theta=0.5=\theta'$ for uniform mesh-refinement
and adaptive mesh-refinement, respectively. 

\begin{figure}
\begin{center}
	\subfigure[\label{subfig:bsp1meshT16}$\TT_{16}$ (3842 elements).]
	{\includegraphics[width=0.3\textwidth]
	{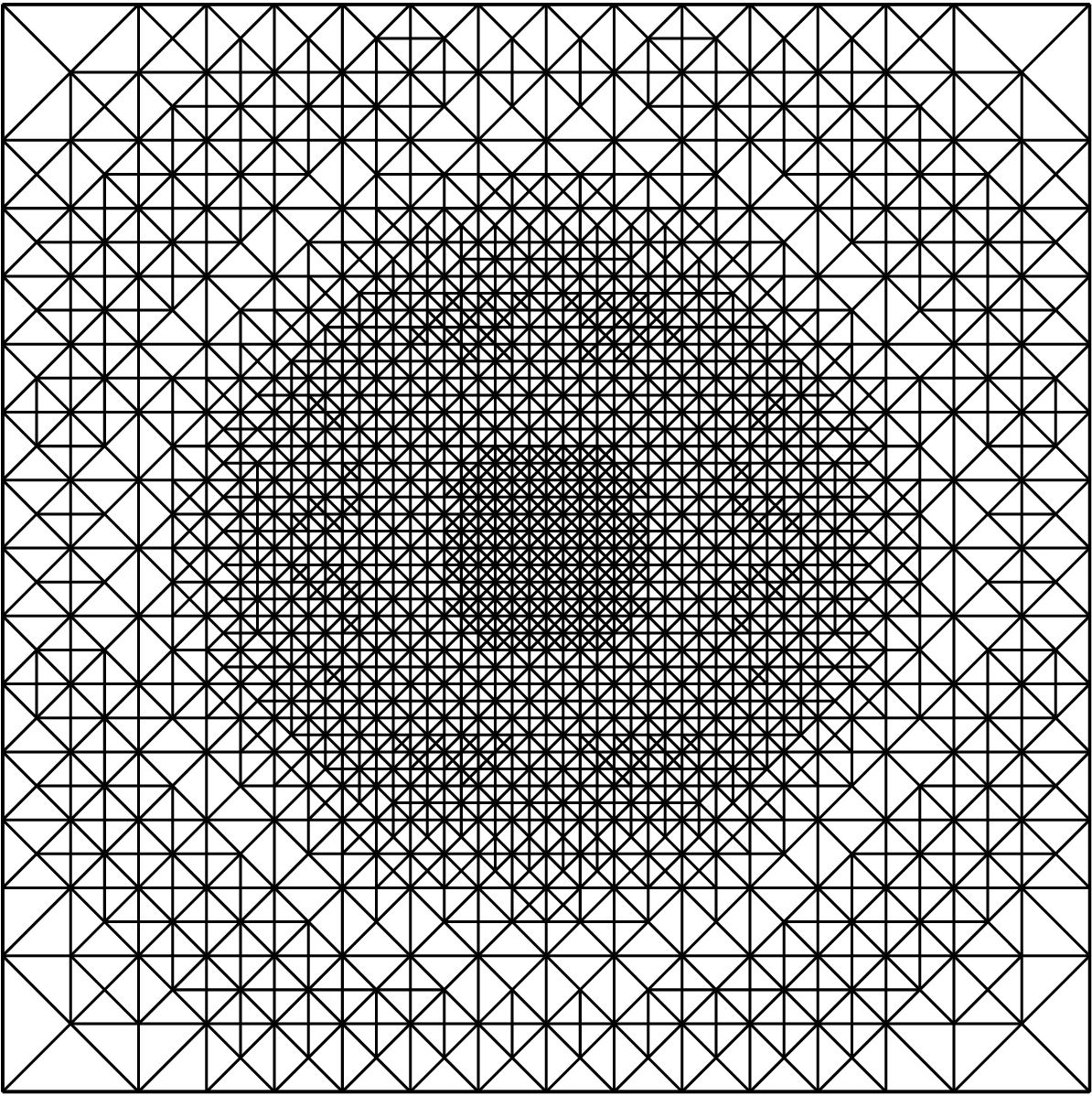}}
	\hspace{0.05\textwidth}
	\subfigure[\label{subfig:bsp1sol}Solution ($\TT_{16}$).]
	{\includegraphics[width=0.46\textwidth]
	{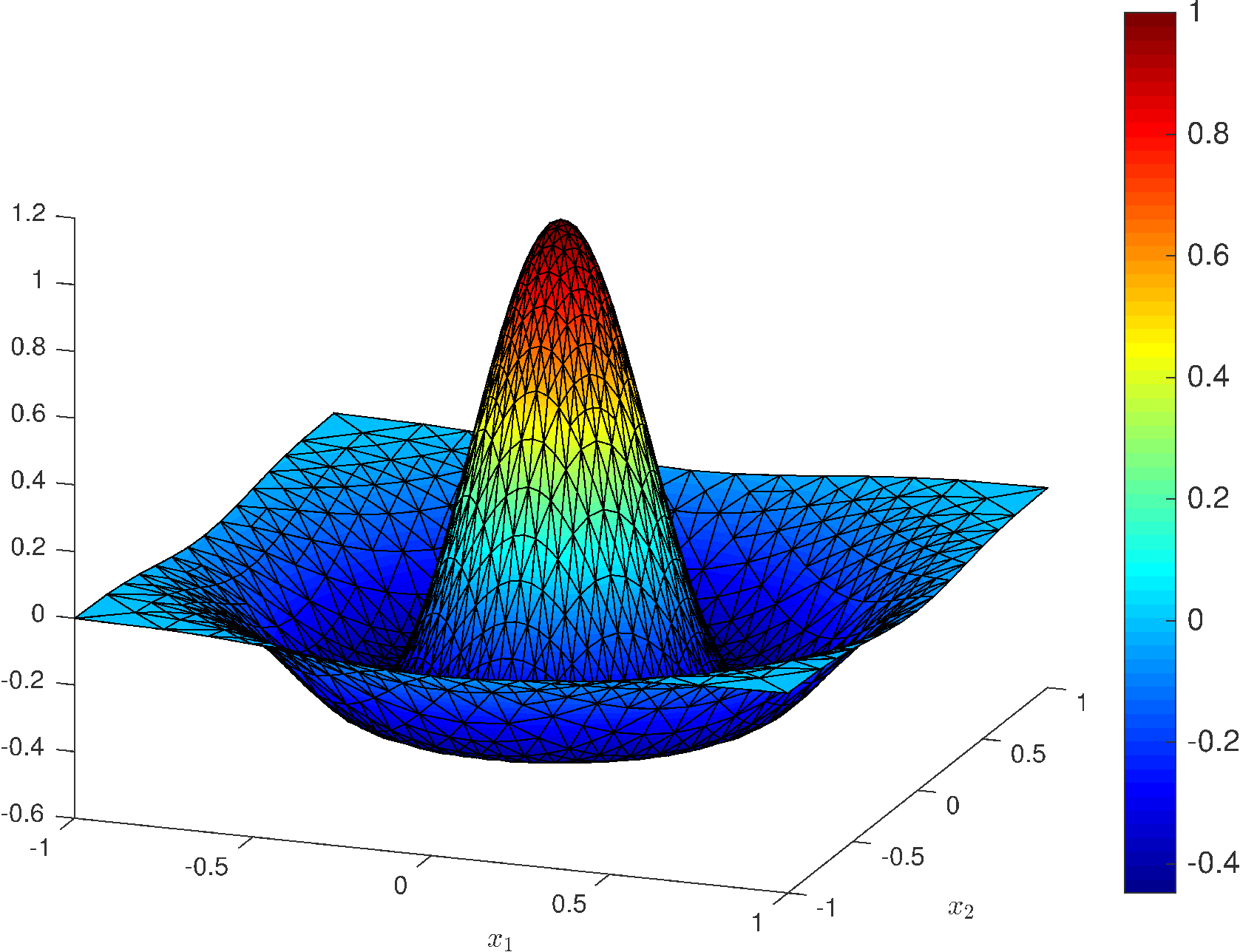}}
\end{center}
\caption{\label{fig:bsp1meshsol}%
Experiment with smooth solution from \cref{ex:bsp1}: 
Adaptively generated mesh $\TT_{16}$ from a
uniform initial triangulation $\TT_0$ with $16$ elements, and discrete FVM solution calculated on $\TT_{16}$.}
\end{figure}
\begin{figure}
\begin{center}
\begin{psfrags}%
\psfragscanon%
%
\psfrag{s01}[l][l]{\scriptsize$1$}%
\psfrag{s08}[l][l]{\small $\eta_\ell$ (uni.)}%
\psfrag{s09}[l][l]{\small $\osc_\ell$ (ada.)}%
\psfrag{s10}[l][l]{\scriptsize$1$}%
\psfrag{s07}[l][l]{\small $E_\ell$ (uni.)}%
\psfrag{s04}[l][l]{\small $\eta_\ell$ (ada.)}%
\psfrag{s02}[l][l]{\small $\osc_\ell$ (uni.)}%
\psfrag{s06}[l][l]{\small $E_\ell$ (ada.)}%
\psfrag{s03}[l][l]{\scriptsize$\;\;\;\;\;\;\;\;1$}%
\psfrag{s14}[l][l]{\scriptsize$\;\;\;1/2$}%
\psfrag{s05}[t][t]{\small number of elements}%
\psfrag{s11}[b][b]{\small error, estimator, oscillation}%
%
\color[rgb]{0.15,0.15,0.15}%
%
\psfrag{x01}[t][t]{\scriptsize${10^{1}}$}%
\psfrag{x02}[t][t]{\scriptsize${10^{2}}$}%
\psfrag{x03}[t][t]{\scriptsize${10^{3}}$}%
\psfrag{x04}[t][t]{\scriptsize${10^{4}}$}%
\psfrag{x05}[t][t]{\scriptsize${10^{5}}$}%
\psfrag{x06}[t][t]{\scriptsize${10^{6}}$}%
\psfrag{x07}[t][t]{\scriptsize${10^{7}}$}%
%
\psfrag{v01}[r][r]{\scriptsize${10^{-3}}$}%
\psfrag{v02}[r][r]{\scriptsize${10^{-2}}$}%
\psfrag{v03}[r][r]{\scriptsize${10^{-1}}$}%
\psfrag{v04}[r][r]{\scriptsize${10^{0}}$}%
\psfrag{v05}[r][r]{\scriptsize${10^{1}}$}%
\psfrag{v06}[r][r]{\scriptsize${10^{2}}$}%
\psfrag{v07}[r][r]{\scriptsize${10^{3}}$}%
%
\includegraphics[width=0.8\textwidth]{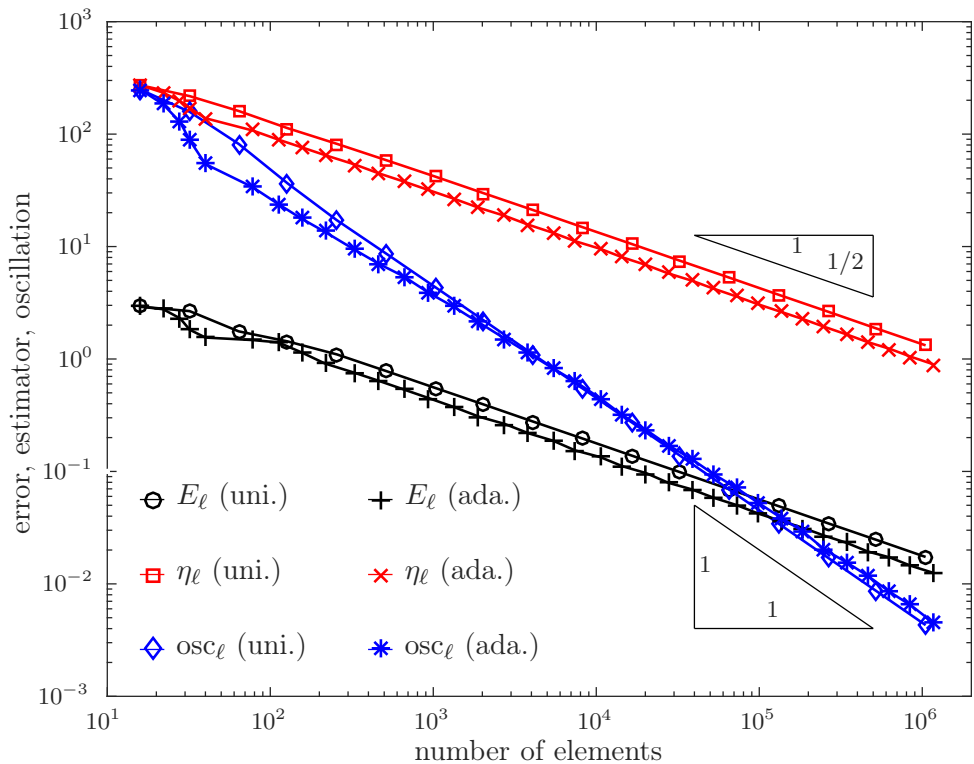}%
\end{psfrags}%
\end{center}
\caption{\label{fig:bsp1error}%
Experiment with smooth solution from \cref{ex:bsp1}: 
Error $E_\ell=\norm{u-u_\ell}{H^1(\Omega)}$, weighted-residual error estimator
$\eta_\ell$, and data oscillations $\osc_\ell$ for uniform and adaptive
mesh-refinement.}
\end{figure}
\subsection{Experiment with smooth solution}
\label{ex:bsp1}
On the square $\Omega=(-1,1)^2$, 
we prescribe
the exact solution $u(x_1,x_2) = (1-10x_1^2-10x_2^2)e^{-5(x_1^2+x_2^2)}$
with $x=(x_1,x_2)\in\R^2$.
We choose the diffusion matrix 
\begin{align*}
	\A=  \left ( \begin{array}{rr}
  10+\cos x_1 & 9\,x_1 x_2 \\
  9\,x_1 x_2 & \;10+\sin x_2
  \end{array}\right),
\end{align*}
the velocity $\b=(\sin x_1,\cos x_2)^T$ and the reaction $\c=1$.
Note that~\cref{eq:A} holds with $\lambda_{\min}=0.82293$ and
$\lambda_{\max}=10.84096$, and~\cref{eq:bcestimate1} with $\frac{1}{2} \div\b+\c> 0$.
The right-hand side $f$ is calculated appropriately.
The uniform initial mesh $\TT_0$ consists of the $16$ triangles. 

In \cref{fig:bsp1meshsol}\subref{subfig:bsp1meshT16} we see
an adaptively generated mesh after $16$ refinements.
\cref{fig:bsp1meshsol}\subref{subfig:bsp1sol} plots the 
smooth solution on the mesh $\TT_{16}$.
Both, uniform and adaptive mesh-refinement, lead 
to the optimal convergence order $\OO(N^{-1/2})$ with respect to the number $N$ of elements
since $u$ is smooth; see 
\cref{fig:bsp1error}.
The oscillations are of higher order and decrease with $\OO(N^{-1})$.

\cref{tab:bsp1} shows the experimental validation of the additional 
assumption in \cref{theorem:mns}, i.e., marking for the data oscillations is negligible;
see also \cref{rem:marking}. 

\begin{table}[!t]\small
\begin{center}
\begin{tabular}[t]{r|rcc}
\hline 
$\ell$  &  $\#\TT_\ell$  & $\frac{\#\MM_\ell}{\#\MM_\ell^\eta} $ 
& $ \frac{\osc_{\ell}(\MM_\ell^\eta)^2}{\osc_\ell^2} $ \\
\hline 
\hline 
  0 &       16 &    1.000 &      0.631 \\  
  1 &       22 &    1.000 &      0.615 \\  
  2 &       28 &    1.000 &      0.704 \\  
  3 &       32 &    1.000 &      0.769 \\  
  4 &       40 &    1.214 &      0.338 \\  
  5 &       78 &    1.111 &      0.446 \\  
  6 &      112 &    1.133 &      0.292 \\  
  7 &      156 &    1.119 &      0.410 \\  
  8 &      216 &    1.062 &      0.394 \\  
  9 &      331 &    1.198 &      0.264 \\  
 10 &      460 &    1.014 &      0.472 \\  
 11 &      660 &    1.049 &      0.371 \\  
\hline
\end{tabular}%
\hfill%
\begin{tabular}[t]{r|rcc}
\hline 
$\ell$  &  $\#\TT_\ell$  & $\frac{\#\MM_\ell}{\#\MM_\ell^\eta} $ 
& $ \frac{\osc_{\ell}(\MM_\ell^\eta)^2}{\osc_\ell^2} $ \\
\hline 
\hline 
 12 &      944 &    1.027 &      0.431 \\  
 13 &     1,338 &    1.025 &      0.400 \\  
 14 &     1,910 &    1.018 &      0.387 \\  
 15 &     2,748 &    1.026 &      0.374 \\  
 16 &     3,842 &    1.015 &      0.358 \\  
 17 &     5,430 &    1.003 &      0.449 \\  
 18 &     7,438 &    1.013 &      0.359 \\  
 19 &    10,590 &    1.003 &      0.445 \\  
 20 &    14,478 &    1.019 &      0.323 \\  
 21 &    20,286 &    1.004 &      0.430 \\  
 22 &    27,558 &    1.004 &      0.457 \\  
 23 &    38,450 &    1.010 &      0.324 \\  
\hline
\end{tabular}%
\hfill%
\begin{tabular}[t]{r|rcc}
\hline 
$\ell$  &  $\#\TT_\ell$  & $\frac{\#\MM_\ell}{\#\MM_\ell^\eta} $ 
& $ \frac{\osc_{\ell}(\MM_\ell^\eta)^2}{\osc_\ell^2} $ \\
\hline 
\hline 
 24 &    52,422 &    1.000 &      0.540 \\  
 25 &    72,454 &    1.007 &      0.404 \\  
 26 &    98,232 &    1.000 &      0.508 \\  
 27 &   135,172 &    1.004 &      0.446 \\  
 28 &   184,142 &    1.000 &      0.606 \\  
 29 &   251,896 &    1.002 &      0.475 \\  
 30 &   342,148 &    1.001 &      0.488 \\  
 31 &   461,674 &    1.000 &      0.617 \\  
 32 &   635,266 &    1.004 &      0.416 \\  
 33 &   852,730 &    1.000 &      0.664 \\  
 34 &  1,172,122 &    1.002 &      0.464 \\  
\hline 
\end{tabular}
\end{center}
\caption{Experiment with smooth solution from \cref{ex:bsp1}: 
We compute $\widetilde{C}_{MNS}:=\#\MM_\ell/\#\MM_\ell^\eta \le 1.3$. 
Hence, the additional assumption in \cref{theorem:mns} is experimentally verified. 
Furthermore, 
we compute $\widetilde\theta':=\osc_\ell(\MM_\ell^\eta)^2/\osc_\ell^2 \ge 0.2$
with $\osc_{\ell}(\MM_\ell^\eta)^2:=\sum_{T\in\MM_\ell^\eta} \osc_\ell(T,u_\ell)^2$, i.e., 
the choice $\theta = 0.5$, $\theta' = 0.2$ would guarantee $\MM_\ell = \MM_\ell^\eta$ in \cref{algorithm:mns}.}
\label{tab:bsp1}
\end{table}


\begin{figure}
\begin{center}
	\subfigure[\label{subfig:bsp2meshT16}$\TT_{16}$ (2534 elements).]
	{\includegraphics[width=0.3\textwidth]
	{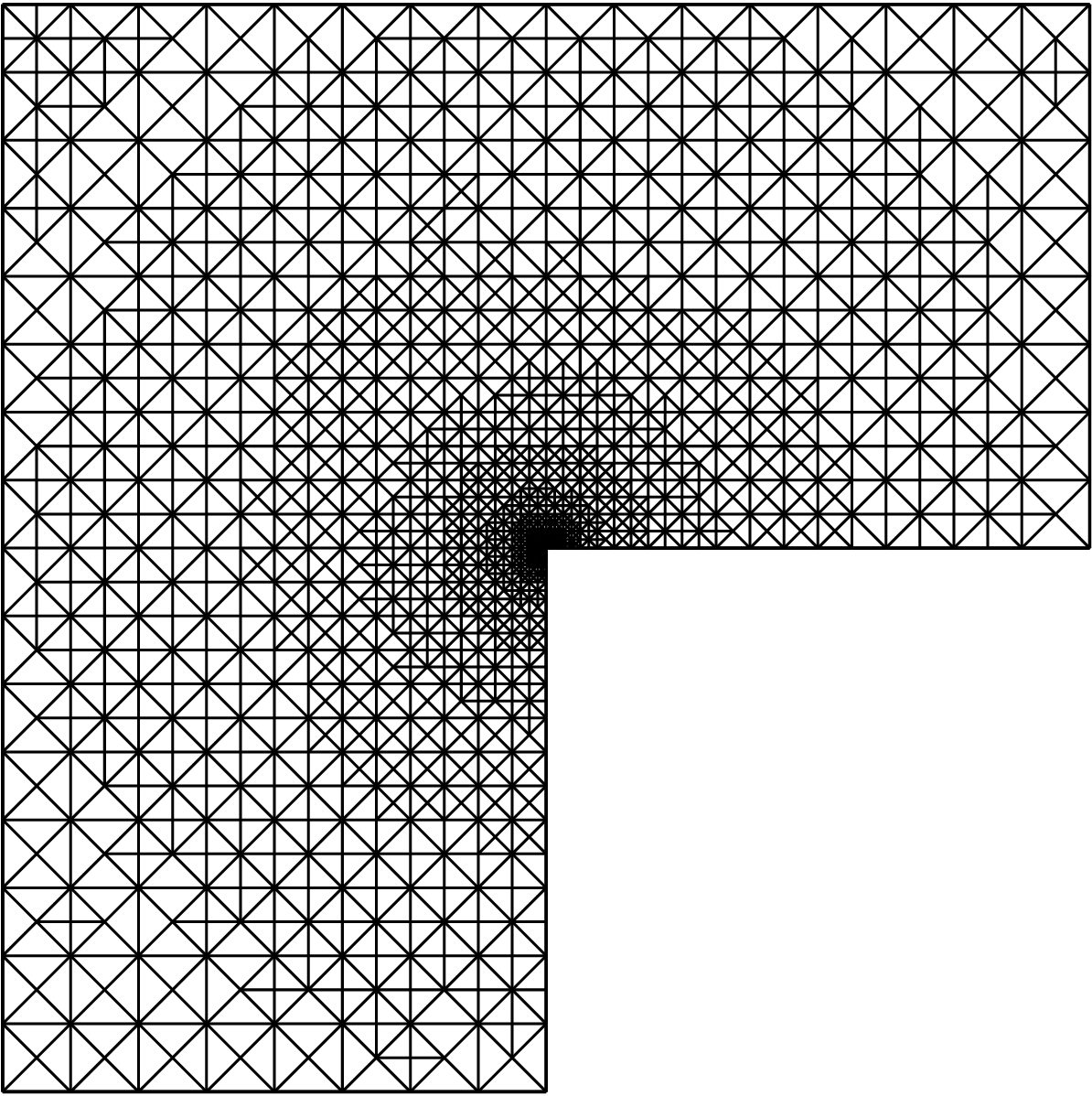}}
	\hspace{0.05\textwidth}
	\subfigure[\label{subfig:bsp2sol}Solution ($\TT_{16}$).]
	{\includegraphics[width=0.46\textwidth]
	{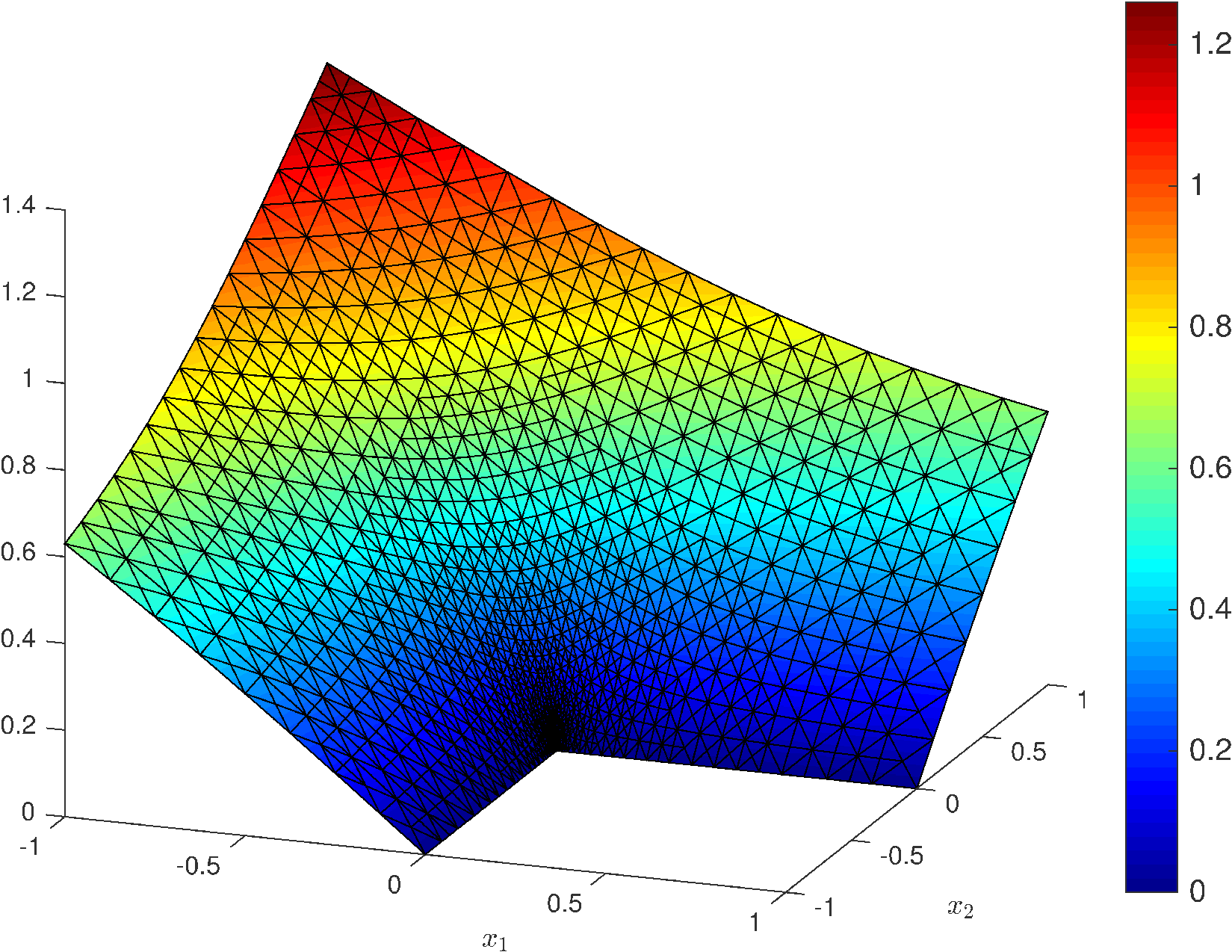}}
\end{center}
\caption{\label{fig:bsp2meshsol}%
Experiment with generic singularity in the reentrant corner $(0,0)$ from \cref{ex:bsp2}:
Adaptively generated mesh $\TT_{16}$ from a
uniform initial triangulation $\TT_0$ with $12$ elements,
and discrete FVM solution calculated on $\TT_{16}$.}
\end{figure}
\begin{figure}
\begin{center}
\begin{psfrags}%
\psfragscanon%
%
\psfrag{s08}[l][l]{\small $\osc_\ell$ (uni.)}%
\psfrag{s15}[t][t]{\small number of elements}%
\psfrag{s05}[l][l]{\small $E_\ell$ (uni.)}%
\psfrag{s16}[l][l]{\small $\eta_\ell$ (uni.)}%
\psfrag{s12}[l][l]{\small $E_\ell$ (ada.)}%
\psfrag{s13}[b][b]{\small error, estimator, oscillation}%
\psfrag{s10}[l][l]{\scriptsize $\;\;\;\;\;\;\;\;\;\;1/2$}%
\psfrag{s14}[l][l]{\small $\osc_\ell$ (ada.)}%
\psfrag{s02}[l][l]{\scriptsize $1$}%
\psfrag{s03}[l][l]{\scriptsize $\;\;\;\;\;\;\;\;\;\;1$}%
\psfrag{s04}[l][l]{\scriptsize $\;\;\;1/3$}%
\psfrag{s09}[l][l]{\small $\eta_\ell$ (ada.)}%
\psfrag{s06}[l][l]{\scriptsize $1$}%
\psfrag{s07}[l][l]{\scriptsize $1$}%
%
\color[rgb]{0.15,0.15,0.15}%
%
\psfrag{x01}[t][t]{\scriptsize${10^{1}}$}%
\psfrag{x02}[t][t]{\scriptsize${10^{2}}$}%
\psfrag{x03}[t][t]{\scriptsize${10^{3}}$}%
\psfrag{x04}[t][t]{\scriptsize${10^{4}}$}%
\psfrag{x05}[t][t]{\scriptsize${10^{5}}$}%
\psfrag{x06}[t][t]{\scriptsize${10^{6}}$}%
\psfrag{x07}[t][t]{\scriptsize${10^{7}}$}%
%
\psfrag{v01}[r][r]{\scriptsize${10^{-5}}$}%
\psfrag{v02}[r][r]{\scriptsize${10^{-4}}$}%
\psfrag{v03}[r][r]{\scriptsize${10^{-3}}$}%
\psfrag{v04}[r][r]{\scriptsize${10^{-2}}$}%
\psfrag{v05}[r][r]{\scriptsize${10^{-1}}$}%
\psfrag{v06}[r][r]{\scriptsize${10^{0}}$}%
\psfrag{v07}[r][r]{\scriptsize${10^{1}}$}%
%
\includegraphics[width=0.8\textwidth]{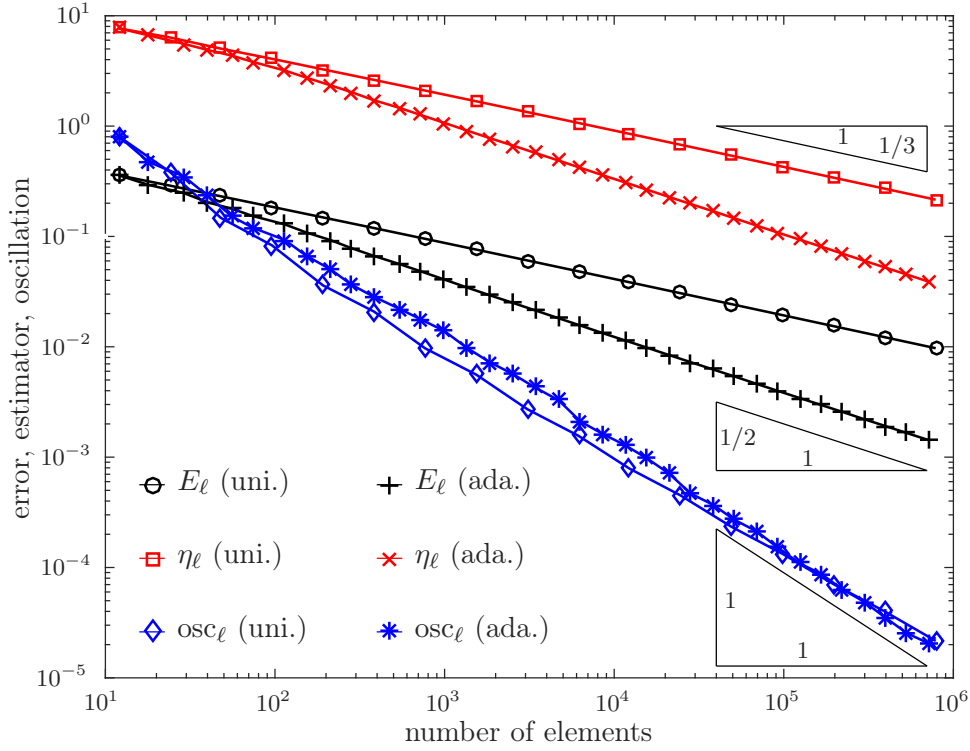}%
\end{psfrags}%
\end{center}
\caption{\label{fig:bsp2error}%
Experiment with generic singularity from \cref{ex:bsp2}: 
Error $E_\ell=\norm{u-u_\ell}{H^1(\Omega)}$, weighted-residual error estimator
$\eta_\ell$, and data oscillations $\osc_\ell$ for uniform and adaptive
mesh-refinement.}
\end{figure}

\begin{table}[!t]\small
\begin{center}
\begin{tabular}[t]{r|rcc}
\hline 
$\ell$  &  $\#\TT_\ell$  & $\frac{\#\MM_\ell}{\#\MM_\ell^\eta} $ & $ \frac{\osc_\ell(\MM_\ell^\eta)^2}{\osc_\ell^2} $ \\
\hline 
\hline 
  0 &       12 &       1.667 &      0.135 \\  
  1 &       18 &       1.750 &      0.086 \\  
  2 &       29 &       1.600 &      0.027 \\  
  3 &       40 &       1.375 &      0.057 \\  
  4 &       56 &       1.400 &      0.252 \\  
  5 &       74 &       1.667 &      0.079 \\  
  6 &      114 &       1.286 &      0.148 \\  
  7 &      153 &       1.188 &      0.243 \\  
  8 &      212 &       1.111 &      0.256 \\  
  9 &      284 &       1.065 &      0.390 \\  
 10 &      380 &       1.194 &      0.168 \\  
 11 &      539 &       1.068 &      0.328 \\  
\hline
\end{tabular}%
\hfill%
\begin{tabular}[t]{r|rcc}
\hline 
$\ell$  &  $\#\TT_\ell$  & $\frac{\#\MM_\ell}{\#\MM_\ell^\eta} $ & $ \frac{\osc_\ell(\MM_\ell^\eta)^2}{\osc_\ell^2} $ \\
\hline 
\hline 
 12 &      721 &       1.050 &      0.346 \\  
 13 &      991 &       1.007 &      0.466 \\  
 14 &     1,356 &       1.003 &      0.482 \\  
 15 &     1,852 &       1.020 &      0.386 \\  
 16 &     2,534 &       1.000 &      0.630 \\  
 17 &     3,413 &       1.009 &      0.443 \\  
 18 &     4,684 &       1.000 &      0.597 \\  
 19 &     6,341 &       1.003 &      0.443 \\  
 20 &     8,568 &       1.002 &      0.490 \\  
 21 &    11,564 &       1.000 &      0.640 \\  
 22 &    15,590 &       1.000 &      0.539 \\  
 23 &    21,071 &       1.000 &      0.569 \\  
\hline
\end{tabular}%
\hfill%
\begin{tabular}[t]{r|rcc}
\hline 
$\ell$  &  $\#\TT_\ell$  & $\frac{\#\MM_\ell}{\#\MM_\ell^\eta} $ & $ \frac{\osc_\ell(\MM_\ell^\eta)^2}{\osc_\ell^2} $ \\
\hline 
\hline 
 24 &    28,304 &       1.017 &      0.437 \\  
 25 &    38,350 &       1.000 &      0.670 \\  
 26 &    51,122 &       1.016 &      0.414 \\  
 27 &    69,135 &       1.000 &      0.563 \\  
 28 &    92,367 &       1.000 &      0.528 \\  
 29 &   123,666 &       1.008 &      0.463 \\  
 30 &   166,532 &       1.000 &      0.703 \\  
 31 &   221,144 &       1.020 &      0.378 \\  
 32 &   298,213 &       1.000 &      0.549 \\  
 33 &   397,086 &       1.000 &      0.597 \\  
 34 &   532,432 &       1.017 &      0.409 \\  
 35 &   712,738 &       1.000 &      0.666 \\
\hline 
\end{tabular}
\end{center}
\caption{Experiment with smooth solution from \cref{ex:bsp2}: We compute $\widetilde{C}_{MNS}:=\#\MM_\ell/\#\MM_\ell^\eta \le 1.8$. 
Hence, the additional assumption in \cref{theorem:mns} is experimentally verified. 
Furthermore, 
we compute $\widetilde\theta':=\osc_\ell(\MM_\ell^\eta)^2/\osc_\ell^2 \ge 0.02$
with $\osc_{\ell}(\MM_\ell^\eta)^2:=\sum_{T\in\MM_\ell^\eta} \osc_\ell(T,u_\ell)^2$,} i.e., 
the choice $\theta = 0.5$, $\theta' = 0.02$ would guarantee $\MM_\ell = \MM_\ell^\eta$ in \cref{algorithm:mns}.
\label{tab:bsp2}
\end{table}

\subsection{Experiment with generic singularity}
\label{ex:bsp2}
On the L-shaped domain $\Omega=\linebreak(-1,1)^2\backslash \big([0,1]\times[-1,0]\big)$
we consider the exact solution 
$u(x_1,x_2) = r^{2/3}\sin(2\varphi/3)$ 
in polar coordinates $r\in\R_0^+$, $\varphi\in[0,2\pi[$,
and $(x_1,x_2) = r(\cos\varphi,\sin\varphi)$.
It is well known that $u$ has a generic singularity at the reentrant corner $(0,0)$, which
leads to $u\in H^{1+2/3-\varepsilon}(\Omega)$ for all
$\varepsilon>0$.
We choose the diffusion matrix 
\begin{align*}
	\A=  \left ( \begin{array}{rr}
  5+(x_1^2+x_2^2)\cos x_1 & (x_1^2+x_2^2)^2 \\
  (x_1^2+x_2^2)^2 & \;5+(x_1^2+x_2^2)\sin x_2
  \end{array}\right)
\end{align*}
so that~\cref{eq:A} holds with $\lambda_{\min}=0.46689$ and
$\lambda_{\max}=5.14751$, and $\b=(1,1)^T$ and $\c=1$ so that \cref{eq:bcestimate1}
holds with $\frac{1}{2}\div\,\b+\c=1$.
The right-hand side $f$ is calculated appropriately.
The uniform initial mesh $\TT_0$ consists of $12$ triangles. An adaptively generated 
mesh after $16$ refinements and 
a plot of the discrete solution are shown in \cref{fig:bsp2meshsol}.

We observe the expected suboptimal convergence order of $\OO(N^{-1/3})$ for uniform mesh-refinement. 
We regain the optimal convergence order of $\OO(N^{-1/2})$ for adaptive mesh-refinement;
see \cref{fig:bsp2error}. As in the experiment of \cref{ex:bsp1}, 
the oscillations are of higher order $\OO(N^{-1})$.
We refer to \cref{tab:bsp2} for the experimental validation of the additional assumption in 
\cref{theorem:mns}~that marking for the data oscillations is negligible.


\begin{figure}
\begin{center}
	\subfigure[\label{subfig:bsp3solT8uni}Solution $\TT_{8}$ (uniform, 8192 elements).]
	{\includegraphics[width=0.46\textwidth]
	{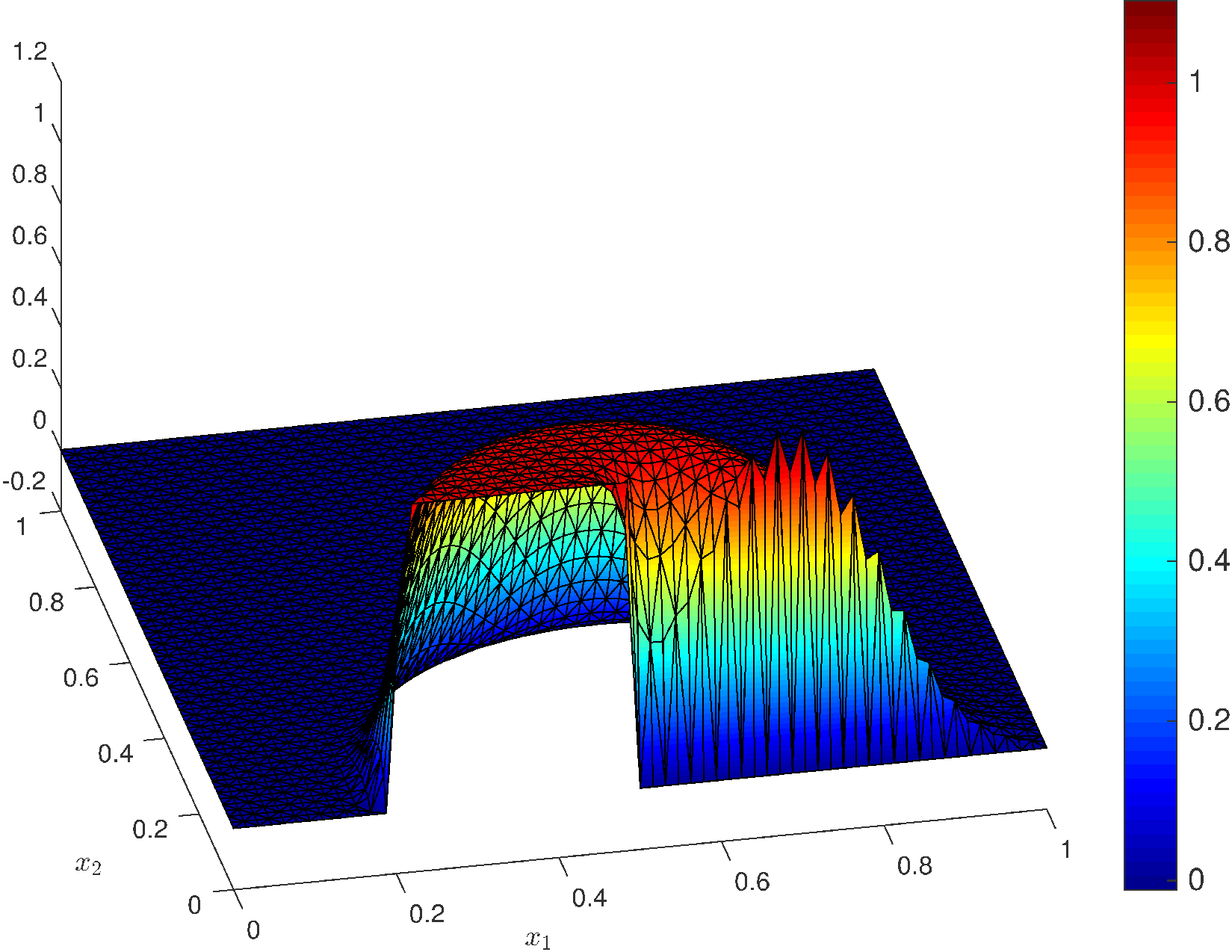}}
	\hspace{0.05\textwidth}
	\subfigure[\label{subfig:bsp3solT14}Solution $\TT_{14}$ (adaptive, 779 elements).]
	{\includegraphics[width=0.46\textwidth]
	{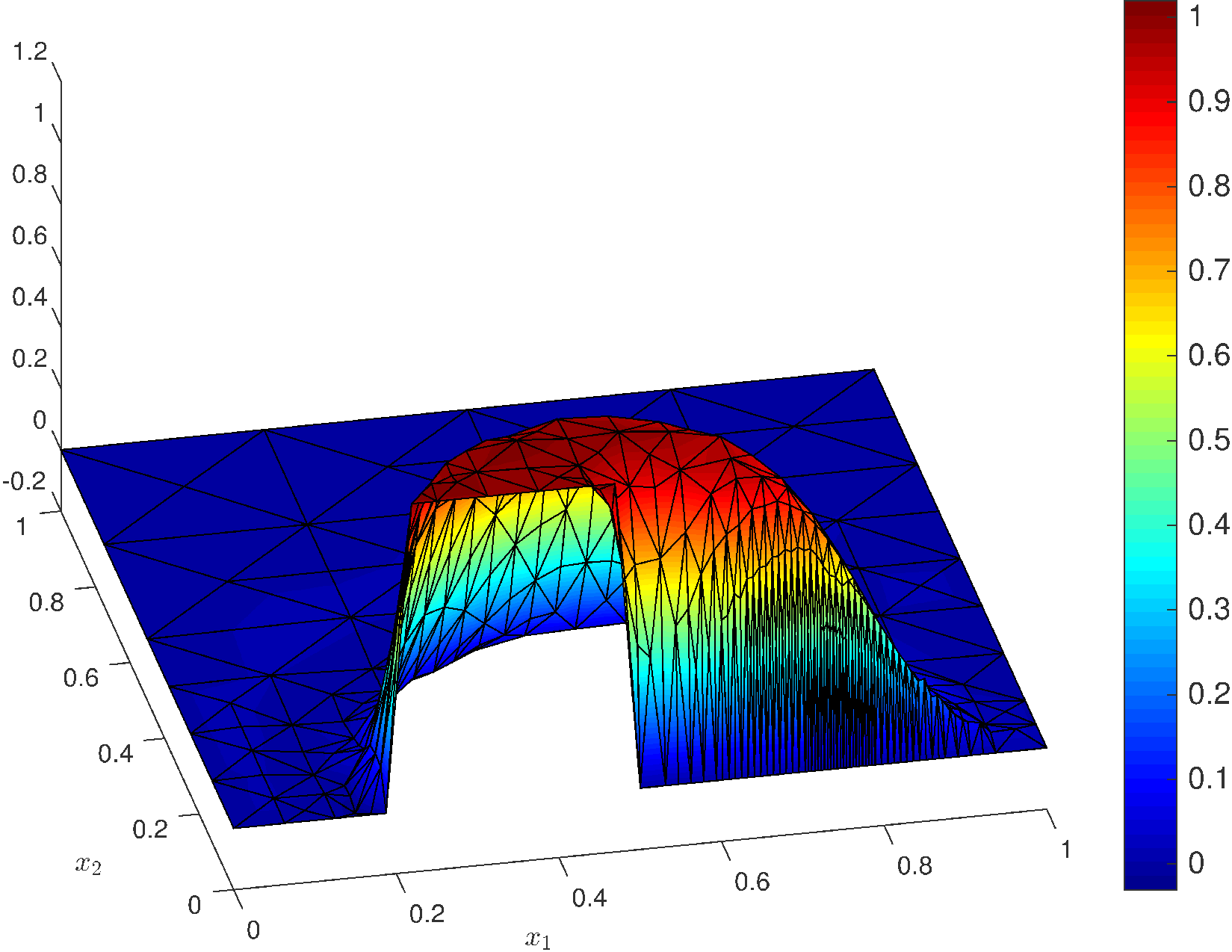}}
\end{center}
\caption{\label{fig:bsp3sol}%
Convection dominated experiment from \cref{ex:bsp3}:
The discrete FVM solution on an uniformly generated mesh $\TT_{8}$
and adaptively generated mesh $\TT_{14}$. 
The algorithm starts with a uniform initial triangulation $\TT_0$ with $32$ elements.}
\end{figure}

\begin{figure}
\begin{center}
	\subfigure[\label{subfig:bsp3meshT14}$\TT_{14}$ (779 elements).]
	{\includegraphics[width=0.3\textwidth]
	{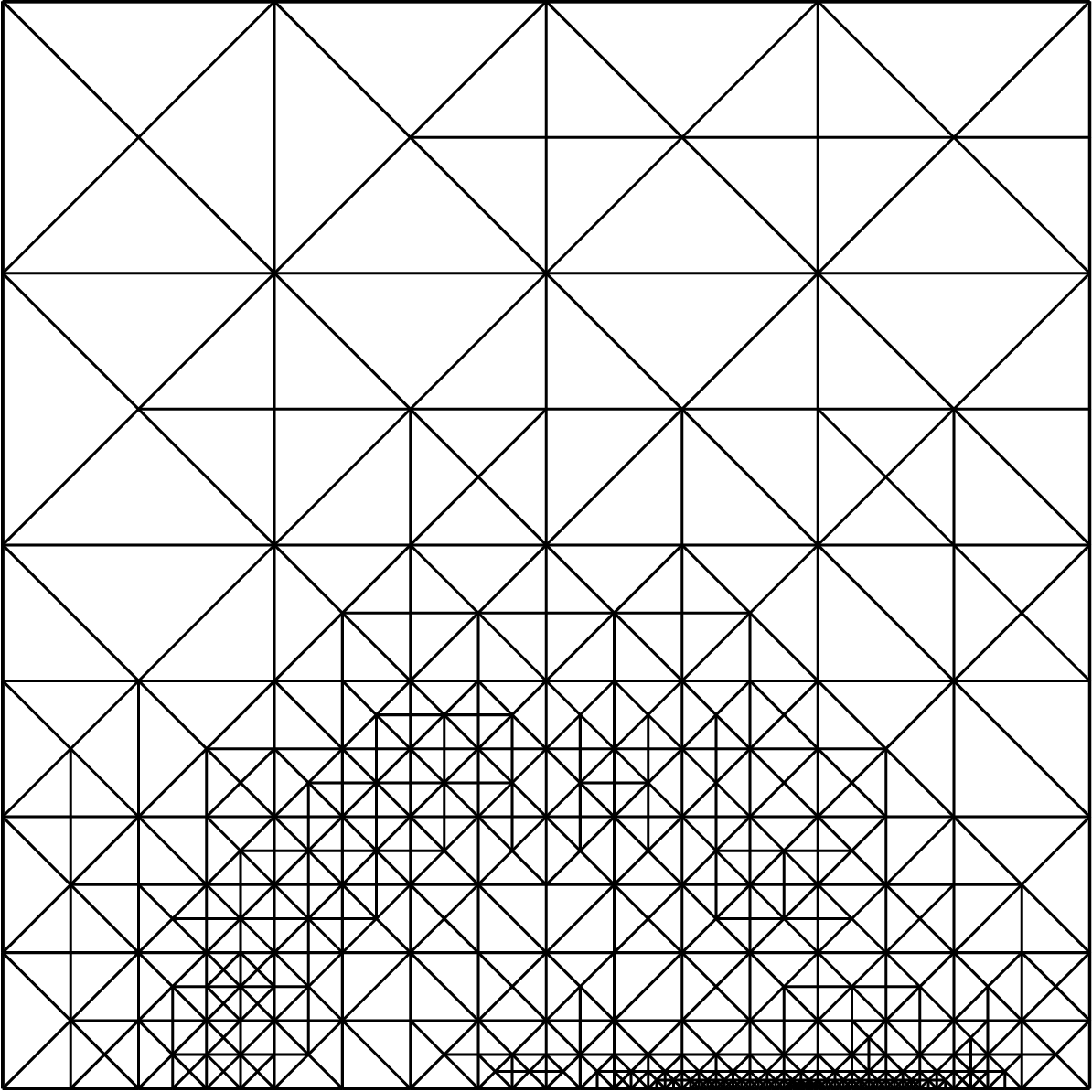}}
	\hspace{0.25\textwidth}
	\subfigure[\label{subfig:bsp3meshT20}$\TT_{20}$ (4336 elements).]
	{\includegraphics[width=0.3\textwidth]
	{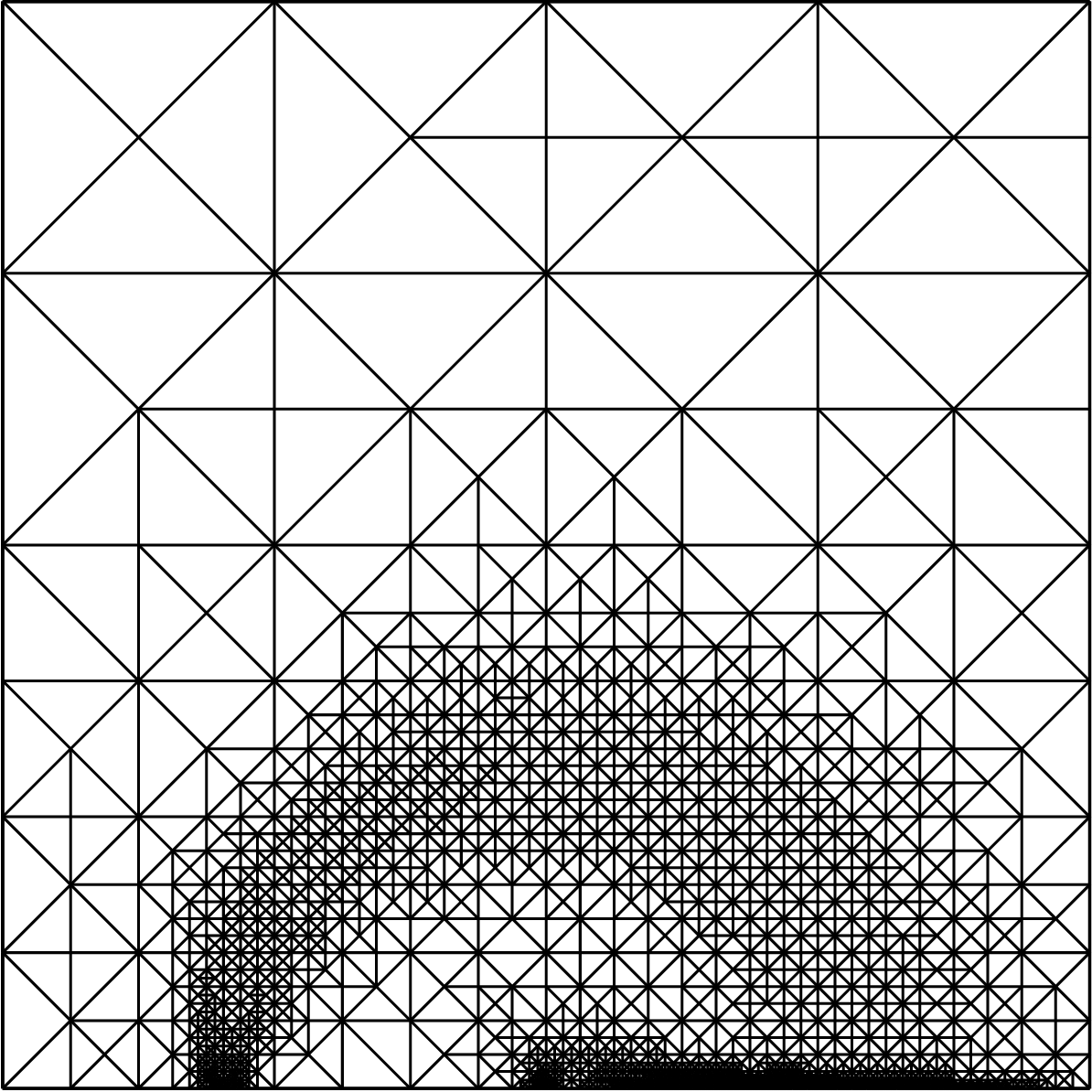}}
\end{center}
\caption{\label{fig:bsp3mesh}%
Convection dominated experiment from \cref{ex:bsp3}:
Adaptively generated meshes $\TT_{14}$ and $\TT_{20}$ from a
uniform initial triangulation $\TT_0$ with $32$ elements.}
\end{figure}

\begin{figure}
\begin{center}
\begin{psfrags}%
\psfragscanon%
%
\psfrag{s05}[l][l]{\small $\osc_\ell$ (uni.)}%
\psfrag{s07}[t][t]{\small number of elements}%
\psfrag{s01}[l][l]{\small $\eta_\ell$ (uni.)}%
\psfrag{s06}[b][b]{\small estimator, oscillation}%
\psfrag{s08}[l][l]{\small $\osc_\ell$ (ada.)}%
\psfrag{s04}[l][l]{\scriptsize $\;\;\;1/3$}%
\psfrag{s12}[l][l]{\small $\eta_\ell$ (ada.)}%
\psfrag{s04}[l][l]{\scriptsize $\;\;1/2$}%
\psfrag{s14}[l][l]{\scriptsize $\;\;1/2$}%
\psfrag{s03}[l][l]{\scriptsize $1$}%
\psfrag{s11}[l][l]{\scriptsize $1$}%
\psfrag{s09}[l][l]{\scriptsize $1$}%
\psfrag{s13}[l][l]{\scriptsize $\;\;\;\;1$}%

%
\color[rgb]{0.15,0.15,0.15}%
%
\psfrag{x01}[t][t]{\scriptsize${10^{1}}$}%
\psfrag{x02}[t][t]{\scriptsize${10^{2}}$}%
\psfrag{x03}[t][t]{\scriptsize${10^{3}}$}%
\psfrag{x04}[t][t]{\scriptsize${10^{4}}$}%
\psfrag{x05}[t][t]{\scriptsize${10^{5}}$}%
\psfrag{x06}[t][t]{\scriptsize${10^{6}}$}%
%
\psfrag{v01}[r][r]{\scriptsize${10^{-6}}$}%
\psfrag{v02}[r][r]{\scriptsize${10^{-5}}$}%
\psfrag{v03}[r][r]{\scriptsize${10^{-4}}$}%
\psfrag{v04}[r][r]{\scriptsize${10^{-3}}$}%
\psfrag{v05}[r][r]{\scriptsize${10^{-2}}$}%
\psfrag{v06}[r][r]{\scriptsize${10^{-1}}$}%
\psfrag{v07}[r][r]{\scriptsize${10^{0}}$}%
%
\includegraphics[width=0.8\textwidth]{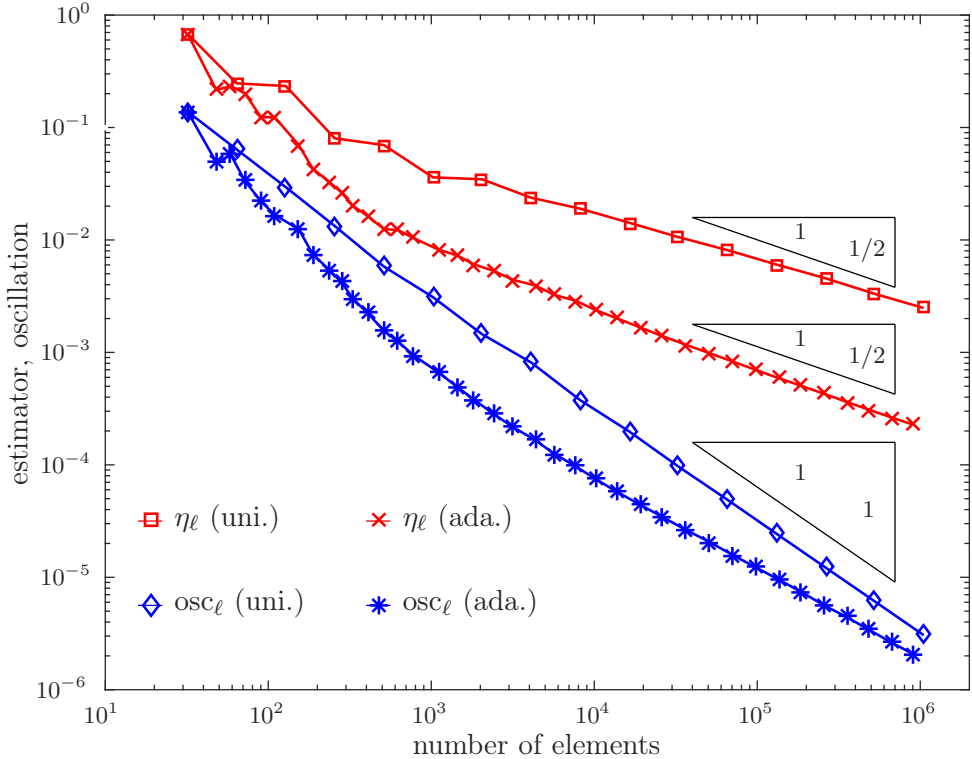}%
\end{psfrags}%
\end{center}
\caption{\label{fig:bsp3error}%
Convection dominated experiment from \cref{ex:bsp3}: 
Weighted-residual error estimator
$\eta_\ell$ and data oscillations $\osc_\ell$ for uniform and adaptive
mesh-refinement.}
\end{figure}
\subsection{Convection dominated experiment}
\label{ex:bsp3}
The final example is taken from~\cite{Mekchay:2005-1}. On the square $\Omega=(0,1)^2$, we
fix the diffusion $\A=10^{-3}\mathbf{I}$ and the convection velocity $\b=(x_2,1/2-x_1)^T$. 
The reaction and right-hand side are $\c=f=0$. Thus,~\cref{eq:A} holds 
with $\lambda_{\min}=\lambda_{\max}=10^{-3}$ and~\cref{eq:bcestimate1} with $\frac{1}{2}\div\,\b+\c=0$.
On the Dirichlet boundary $\Gamma$, we prescribe the continuous piecewise linear function by
\begin{align*}
	u(x_1,x_2)|_\Gamma=  \left \{ \begin{array}{rl}
  1 & \quad\text{on } \{0.2005\leq x_1\leq 0.4995, x_2=0\}, \\
  0 & \quad\text{on } \Gamma\backslash\{0.2\leq x_1\leq 0.5; x_2=0\}\\
  \text{linear} & \quad\text{on } \{0.2\leq x_1\leq 0.2005 \text{ or }0.4995\leq x_1\leq 0.5; x_2=0\},
  \end{array}\right.
\end{align*}
The model has a moderate convection dominance with respect to the diffusion and 
simulates the transport of a pulse from $\Gamma$ to the interior and back to $\Gamma$.
For this example, we do not know the analytical solution. 
The uniform initial mesh $\TT_0$ consists of $32$ triangles. 
In \cref{fig:bsp3sol}\subref{subfig:bsp3solT8uni}, we see the solution with strong oscillations 
on an uniformly generated mesh with
$8192$ elements. The oscillations are due to the convection dominance.
For the next refinement step ($16384$ elements, not plotted), however, the oscillations disappear since the shock region
at the boundary is refined enough.
Our adaptive \cref{algorithm:mns}, which \emph{also} has a mandatory oscillation marking, provides a stable solution 
on a mesh with only $779$ elements; see \cref{fig:bsp3sol}\subref{subfig:bsp3solT14}. 
In \cref{fig:bsp3mesh}, we plot adaptively generated 
meshes after $14$ and $20$ mesh-refinements. We see a strong refinement in the shock region.
A similar observation can be found in~\cite{Mekchay:2005-1}.
We remark that this strategy only works for this moderate convection dominated problem. For $\A=10^{-8}\mathbf{I}$,
we cannot see any stabilization effects by \cref{algorithm:mns} (not displayed).
Hence, only a stabilization of the numerical scheme, e.g., FVM with upwinding, would avoid these instabilities. 
However, the analysis of 
such schemes is beyond the scope of this work.
We observe the above stabilization effects also in the convergence plot of the estimator; see \cref{fig:bsp3error}.
Note that the estimator for adaptive mesh-refinement is faster in the asymptotic convergence than 
the estimator for uniform
mesh-refinement.
Additionally, the convergence rate for the estimator is suboptimal for uniform mesh-refinement. 
For adaptive mesh-refinement, we regain the 
optimal convergence order of $\OO(N^{-1/2})$;
see \cref{fig:bsp3error}. As in the previous experiments, the oscillations are of higher order.
In \cref{tab:bsp3}, we also see that the oscillation
marking for this convection dominated problem is for more refinement steps dominant 
than for the previous problems; see also 
the discussion in~\cite{Mekchay:2005-1}.
\begin{table}[!t]\small
\begin{center}
\begin{tabular}[t]{r|rcc}
\hline 
$\ell$  &  $\#\TT_\ell$  & $\frac{\#\MM_\ell}{\#\MM_\ell^\eta} $ & $ \frac{\osc_\ell(\MM_\ell^\eta)^2}{\osc_\ell^2} $ \\
\hline 
\hline 
  0 &       32 &  1.125 &      0.434 \\  
  1 &       48 &  1.400 &      0.201 \\  
  2 &       59 &  1.500 &      0.266 \\  
  3 &       72 &  1.667 &      0.196 \\  
  4 &       90 &  2.500 &      0.177 \\  
  5 &      110 &  1.333 &      0.266 \\  
  6 &      154 &  1.583 &      0.085 \\  
  7 &      187 &  1.500 &      0.124 \\  
  8 &      238 &  1.786 &      0.055 \\  
  9 &      280 &  1.296 &      0.234 \\  
 10 &      332 &  1.371 &      0.154 \\  
 11 &      405 &  1.412 &      0.124 \\  
 12 &      511 &  1.537 &      0.083 \\  
\hline 
\end{tabular}%
\hfill%
\begin{tabular}[t]{r|rcc}
\hline 
$\ell$  &  $\#\TT_\ell$ & $\frac{\#\MM_\ell}{\#\MM_\ell^\eta} $ & $ \frac{\osc_\ell(\MM_\ell^\eta)^2}{\osc_\ell^2} $ \\
\hline 
\hline 
 13 &      628 &  1.521 &      0.146 \\  
 14 &      779 &  1.559 &      0.077 \\  
 15 &     1,100 &  1.600 &      0.064 \\  
 16 &     1,428 &  1.605 &      0.063 \\  
 17 &     1,837 &  1.643 &      0.037 \\  
 18 &     2,416 &  1.594 &      0.058 \\  
 19 &     3,195 &  1.437 &      0.060 \\   
 20 &     4,336 &  1.583 &      0.048 \\  
 21 &     5,664 &  1.402 &      0.072 \\  
 22 &     7,666 &  1.445 &      0.047 \\  
 23 &    10,186 &  1.351 &      0.067 \\  
 24 &    13,919 &  1.258 &      0.078 \\  
 25 &    19,041 &  1.230 &      0.112 \\  
\hline 
\end{tabular}%
\hfill%
\begin{tabular}[t]{r|rcc}
\hline 
$\ell$  &  $\#\TT_\ell$ & $\frac{\#\MM_\ell}{\#\MM_\ell^\eta} $ & $ \frac{\osc_\ell(\MM_\ell^\eta)^2}{\osc_\ell^2} $ \\
\hline 
\hline 
 26 &    26,248 &  1.182 &      0.106 \\  
 27 &    36,592 &  1.142 &      0.135 \\  
 28 &    50,806 &  1.112 &      0.180 \\  
 29 &    70,367 &  1.082 &      0.196 \\  
 30 &    97,946 &  1.058 &      0.227 \\  
 31 &   135,122 &  1.057 &      0.236 \\  
 32 &   186,959 &  1.028 &      0.311 \\  
 33 &   255,994 &  1.021 &      0.311 \\  
 34 &   351,880 &  1.022 &      0.289 \\  
 35 &   484,157 &  1.015 &      0.328 \\  
 36 &   662,325 &  1.006 &      0.381 \\  
 37 &   902,659 &  1.005 &      0.384 \\ 
\hline 
\end{tabular}
\end{center}
\caption{Experimental results on marking strategy
for the convection dominated experiment from \cref{ex:bsp3}:
We compute $\widetilde{C}_{MNS}:=\#\MM_\ell/\#\MM_\ell^\eta \le 3$ and see that the additional assumption in \cref{theorem:mns} is experimentally verified. In addition, we compute
$\widetilde\theta':=\osc_\ell(\MM_\ell^\eta)^2/\osc_\ell^2 \ge 0.03$
with $\osc_{\ell}(\MM_\ell^\eta)^2:=\sum_{T\in\MM_\ell^\eta} \osc_\ell(T,u_\ell)^2$,
i.e., the choice $\theta = 0.5$, $\theta' = 0.03$ would guarantee $\MM_\ell = \MM_\ell^\eta$ in \cref{algorithm:mns}.}
\label{tab:bsp3}
\end{table}
\section{Conclusions}
%
In this work, we have proved linear convergence of an adaptive vertex-centered
finite volume method with generically optimal algebraic rates to the solution of a general second-order
linear elliptic PDE. Besides marking based on the local contributions of the {\sl a~posteriori}
error estimator, we additionally had to mark the 
oscillations
to overcome the lack of a classical Galerkin orthogonality property.
In case of dominating convection, finite volume methods provide a natural upwind stabilization.
Although there exist estimators also for these upwind discretizations~\cite{Erath:2013-1}, 
we were not able to provide a 
rigorous convergence result for the related adaptive mesh-refinement strategy.
Note that the upwind direction and thus the corresponding error indicator contributions
are defined over the boundary of the control volumes of the dual mesh.
As mentioned above, the dual meshes are not nested even for a sequence of locally refined triangulations. 
This makes it difficult to show~\cref{eq:A1}--\cref{eq:A2} and~\cref{eq:B1}--\cref{eq:B2}. 
We stress that the other error indicator contributions are defined over the elements of the primal mesh and can hence be treated by the developed techniques.

\bibliographystyle{alpha}
\bibliography{afvmgeneral}  

\begin{thebibliography}{CKNS08}

\bibitem[BDD04]{bdd04}
P.~Binev, W.~Dahmen, and R.~DeVore.
\newblock Adaptive finite element methods with convergence rates.
\newblock {\em Numer. Math.}, 97(2):219--268, 2004.

\bibitem[BHP17]{bhp17}
A.~Bespalov, A.~Haberl, and D.~Praetorius.
\newblock Adaptive fem with coarse initial mesh guarantees optimal convergence
  rates for compactly perturbed elliptic problems.
\newblock {\em Comput.\ Methods Appl.\ Mech.\ Engrg.}, 317:318--340, 2017.

\bibitem[BL76]{MR0482275}
J.~Bergh and J.~L\"ofstr\"om.
\newblock {\em Interpolation spaces. {A}n introduction}.
\newblock Springer-Verlag, Berlin-New York, 1976.
\newblock Grundlehren der Mathematischen Wissenschaften, No. 223.

\bibitem[CFPP14]{axioms}
C.~Carstensen, M.~Feischl, M.~Page, and D.~Praetorius.
\newblock Axioms of adaptivity.
\newblock {\em Comput. Math. Appl.}, 67:1195--1253, 2014.

\bibitem[CKNS08]{ckns}
J.~M. Casc\'on, C.~Kreuzer, R.~H. Nochetto, and K.~G. Siebert.
\newblock Quasi-optimal convergence rate for an adaptive finite element method.
\newblock {\em SIAM J. Numer. Anal.}, 46(5):2524--2550, 2008.

\bibitem[CLT05]{Carstensen:2005-1}
C.~Carstensen, R.~D. Lazarov, and S.~Z. Tomov.
\newblock {Explicit and averaging a posteriori error estimates for adaptive
  finite volume methods}.
\newblock {\em SIAM J. Numer. Anal.}, 42(6):2496--2521, 2005.

\bibitem[CN12]{Cascon:2012-1}
J.~M. Casc\'on and R.~H. Nochetto.
\newblock Quasioptimal cardinality of {AFEM} driven by nonresidual estimators.
\newblock {\em IMA J. Numer. Anal.}, 32(1):1--29, 2012.

\bibitem[D{\"o}r96]{doerfler96}
W.~D{\"o}rfler.
\newblock A convergent adaptive algorithm for {P}oisson's equation.
\newblock {\em SIAM J. Numer. Anal.}, 33(3):1106--1124, 1996.

\bibitem[ELL02]{Ewing:2002-1}
R.~E. Ewing, T.~Lin, and Y.~Lin.
\newblock {On the accuracy of the finite volume element method based on
  piecewise linear polynomials}.
\newblock {\em SIAM J. Numer. Anal.}, 39(6):1865--1888, 2002.

\bibitem[EP16]{Erath:2016-1}
C.~Erath and D.~Praetorius.
\newblock {Adaptive vertex-centered finite volume methods with convergence
  rates}.
\newblock {\em SIAM J. Numer. Anal.}, 54(4):2228--2255, 2016.

\bibitem[EP17]{Erath:2017-1}
C.~Erath and D.~Praetorius.
\newblock {C\'ea-type quasi-optimality and convergence rates for (adaptive)
  vertex-centered FVM}.
\newblock In C.~Canc{\`e}s and P.~Omnes, editors, {\em Finite Volumes for
  Complex Applications VIII - Methods and Theoretical Aspects}, volume 199,
  pages 215--223. Springer, Berlin, 2017.

\bibitem[Era10]{Erath:2010-phd}
C.~Erath.
\newblock {\em {Coupling of the Finite Volume Method and the Boundary Element
  Method - Theory, Analysis, and Numerics}}.
\newblock PhD thesis, University of Ulm, 2010.

\bibitem[Era12]{Erath:2012-1}
C.~Erath.
\newblock {Coupling of the finite volume element method and the boundary
  element method: an a priori convergence result}.
\newblock {\em SIAM J. Numer. Anal.}, 50(2):574--594, 2012.

\bibitem[Era13]{Erath:2013-1}
C.~Erath.
\newblock {A posteriori error estimates and adaptive mesh refinement for the
  coupling of the finite volume method and the boundary element method}.
\newblock {\em SIAM J. Numer. Anal.}, 51(3):1777--1804, 2013.

\bibitem[FFP14]{Feischl:2014}
M.~Feischl, T.~F{\"u}hrer, and D.~Praetorius.
\newblock Adaptive {FEM} with optimal convergence rates for a certain class of
  nonsymmetric and possibly nonlinear problems.
\newblock {\em SIAM J. Numer. Anal.}, 52(2):601--625, 2014.

\bibitem[Gri85]{grisvard}
P.~Grisvard.
\newblock {\em Elliptic {P}roblems in {N}onsmooth {D}omains}.
\newblock Pitman, Boston, 1985.

\bibitem[KPP13]{kpp13}
M.~Karkulik, D.~Pavlicek, and D.~Praetorius.
\newblock On 2{D} newest vertex bisection: optimality of mesh-closure and
  {$H^1$}-stability of {$L_2$}-projection.
\newblock {\em Constr. Approx.}, 38(2):213--234, 2013.

\bibitem[MN05]{Mekchay:2005-1}
K.~Mekchay and R.~H. Nochetto.
\newblock {Convergence of adaptive finite element methods for general second
  order linear elliptic PDEs}.
\newblock {\em SIAM J. Numer. Anal.}, 43(5):1803--1827, 2005.

\bibitem[MNS00]{mns00}
P.~Morin, R.~H. Nochetto, and K.~G. Siebert.
\newblock Data oscillation and convergence of adaptive {FEM}.
\newblock {\em SIAM J. Numer. Anal.}, 38(2):466--488, 2000.

\bibitem[Ste07]{stevenson07}
R.~Stevenson.
\newblock Optimality of a standard adaptive finite element method.
\newblock {\em Found. Comput. Math.}, 7(2):245--269, 2007.

\bibitem[Ste08]{stevenson08}
R.~Stevenson.
\newblock The completion of locally refined simplicial partitions created by
  bisection.
\newblock {\em Math. Comp.}, 77(261):227--241, 2008.

\bibitem[SZ90]{scottzhang}
L.~R. Scott and S.~Zhang.
\newblock Finite element interpolation of nonsmooth functions satisfying
  boundary conditions.
\newblock {\em Math. Comp.}, 54(190):483--493, 1990.

\end{thebibliography}

\end{document}